 \newtheorem{theorem}{Theorem}[section]
 \newtheorem{proposition}[theorem]{Proposition}
\theoremstyle{definition}
 \theoremstyle{remark} 
 \newtheorem{remark}[theorem]{Remark}
\newcommand{\ep}{\varepsilon}
\newcommand{\p}{\partial}
\newcommand{\RR}{\mathbb{R}}
\newcommand{\TT}{\mathbb{T}}
\renewcommand{\Re}{\operatorname{Re}}
\renewcommand{\Im}{\operatorname{Im}}
\DeclareMathOperator{\diag}{diag}
\numberwithin{equation}{section}
\begin{document}

\title[]
{Local well-posedness of the initial value problem\\
for a fourth-order nonlinear dispersive system\\
 on the real line
}
\author[E.~Onodera]{Eiji Onodera}
\address[Eiji Onodera]{Department of Mathematics and Physics, 
Faculty of Science and Technology, 
Kochi University, 
Kochi 780-8520, 
Japan}
\email{onodera@kochi-u.ac.jp}
\subjclass[2020]{35E15, 35G55, 35Q99, 58J99}
\keywords
{
System of nonlinear fourth-order 
dispersive partial differential equations;
Local well-posedness; 
Generalized bi-Schr\"odinger flow;
Gauge transformation;
Bona-Smith approximation
}
%
%
%
\begin{abstract}
This paper investigates the initial value problem for 
a system of one-dimensional fourth-order 
dispersive partial differential-integral equations with nonlinearity
involving derivatives up to second order. 
Examples of the system arise in relation 
with nonlinear science and geometric analysis. 
Applying the energy method based on the idea of a gauge transformation 
and Bona-Smith approximation technique, 
we prove that the initial value problem is time-locally well-posed 
on the real line for initial data in a Sobolev space with high regularity. 
\end{abstract}
%
\maketitle
%
%
\section{Introduction}
\label{section:introduction}
This paper investigates the initial value problem for an $n$-component 
system of fourth-order nonlinear  
dispersive partial differential-integral equations on the real line: 
 \begin{alignat}{2}
 \left(
 \p_t-iM_a\p_x^4-M_b\p_x^3-iM_{\lambda}\p_x^2
 \right)
 Q
  &=F(Q, \p_xQ, \p_x^2Q)
  & \quad
  & \text{in}
    \quad
    \RR\times\RR,
 \label{eq:apde}
 \\
  Q(0,x)
   &=
   Q_{0}(x)
 & \quad
 & \text{in}
   \quad
   \RR, 
 \label{eq:adata}
 \end{alignat}
where $n$ is a positive integer, 
$Q={}^t(Q_1,\ldots,Q_n)(t,x):\RR\times \RR\to \mathbb{C}^n$ 
 is an unknown function,  
 $Q_0={}^t(Q_{01},\ldots,Q_{0n})(x):\RR\to \mathbb{C}^n$ 
 is a given initial function, 
$i=\sqrt{-1}$, 
$a=(a_1,\ldots,a_n)\in (\RR\backslash \left\{0\right\})^n$,  
$M_a=\diag(a_1,\ldots,a_n)$, 
$b=(b_1,\ldots, b_n)\in \RR^n$, 
$M_{b}=\diag(b_1,\ldots,b_n)$, 
$\lambda=(\lambda_1,\ldots, \lambda_n)\in \RR^n$, 
$M_{\lambda}=\diag(\lambda_1,\ldots,\lambda_n)$, 
and
 \begin{align}
   F(Q,\p_xQ,\p_x^2Q)
   &=
   {}^t(F_1(Q,\p_xQ,\p_x^2Q),\ldots,F_n(Q,\p_xQ,\p_x^2Q)) 
 \nonumber
 \end{align}
is a nonlinear expression of 
$Q$, $\p_xQ$, $\p_x^2Q$ and their complex conjugates
$\overline{Q}$, $\overline{\p_xQ}$, $\overline{\p_x^2Q}$. 
It is supposed that 
each of $F_j(Q,\p_xQ,\p_x^2Q)$ for $j\in \left\{1,\ldots,n\right\}$ 
takes the form
  \begin{align}
   F_j(Q,\p_xQ,\p_x^2Q)
   &=
   F_j^1(Q, \p_x^2Q)+F_j^2(Q,\p_xQ)+F_j^3(Q,\p_xQ), 
   \nonumber
   \\
   F_j^3(Q,\p_xQ)(t,x)
          &=
          \sum_{r=1}^{n}
          \left(\int_{-\infty}^x
            F_{j,r}^{3,A}(Q,\p_xQ)(t,y)
            dy\right)
             F_{j,r}^{3,B}(Q)(t,x), 
    \nonumber
   \end{align} 
   and all of the following conditions
      (F1)-(F3) are satisfied:  
   \begin{enumerate}
   \item[(F1)] 
$F_j^1(u,w)$ for each 
 $j\in \left\{1,\ldots,n\right\}$ 
 is a complex-valued polynomial in 
      $u,\overline{u}$, $w,\overline{w}\in \mathbb{C}^{n}$ 
satisfying
      \begin{align}
      &|F_j^1(u, w)|
      \leqslant c_{j}^1
      |u|^2
      |w|
      \quad 
      \text{for any $u, w\in \mathbb{C}^{n}$},
      \label{eq:F1}
      \end{align} 
 where $c_j^1>0$ is a constant which may depend on $j$
  but not on $u,w$.   
      \item[(F2)]
There exist integers $d_1, d_2\geqslant 0$ such that 
$F_j^2(u,v)$  for all $j\in \left\{1,\ldots,n\right\}$ 
is a complex-valued polynomial in
$u,\overline{u},v,\overline{v}\in \mathbb{C}^{n}$ satisfying  
            \begin{equation}
            |F_j^2(u,v)|
            \leqslant 
            c_j^2
            \sum_{p_1=0}^{d_1}
            \sum_{p_2=0}^{d_2}
            |u|^{1+p_1}|v|^{p_2}
            \quad 
            \text{ for any $u,v\in \mathbb{C}^n$}, 
            \label{eq:F2}
            \end{equation}
            where $c_j^2>0$ is a positive constant which may depend on $j$
            but not on $u,v$. 
      \item[(F3)] 
There exist integers $d_3,d_4,d_5\geqslant 0$ such that $F_{j,r}^{3,A}(u,v)$ and $F_{j,r}^{3,B}(u)$  for all 
$j,r\in \left\{1,\ldots,n\right\}$ 
are respectively complex-valued polynomials in 
           $u$,$\overline{u}$,$v$, $\overline{v}\in \mathbb{C}^{n}$ and in $u,\overline{u}\in \mathbb{C}^{n}$   
           satisfying
          \begin{align}
          \hspace{2em}|F_{j,r}^{3,A}(u,v)|
          &\leqslant c_{j,r}
          \left(
          \sum_{p_3=0}^{d_3}|u|^{2+p_3}+\sum_{p_4=0}^{d_4}|v|^{2+p_4}
          \right)
          \ 
          \text{for any $u,v\in \mathbb{C}^n$}, 
          \label{eq:F31}
          \\
          |F_{j,r}^{3,B}(u)|
          &\leqslant 
          c_{j,r}\sum_{p_5=0}^{d_5}
          |u|^{1+p_5}
          \ \ 
          \text{for any $u\in \mathbb{C}^{n}$}, 
          \label{eq:F32}
          \end{align}
where $c_{j,r}>0$  
is a positive constants which may depend on $j,r$
but not on $u,v$. 
   \end{enumerate}
\par 
Examples of \eqref{eq:apde} 
satisfying (F1)-(F3)
with nonlinearity involving derivatives up to second order 
arise in some fields of nonlinear science, 
in which the nonlocal terms satisfying (F3) are not involved, 
that is, 
$F_{j}^{3}(Q,\p_xQ)\equiv 0$ for all 
$j\in \left\{1,\ldots,n\right\}$.   
They include single equations (in the case of $n=1$)
which are related to 
the vortex filament (\cite{fukumoto,FM}), 
continuum models of Heisenberg spin chain systems (\cite{DKA,LPD,PDL})
and alpha-helical proteins (\cite{DL}). 
(See \eqref{eq:4shro} in Section~\ref{section:Background}.)
They also include an $n$-component system to 
study the wave propagation of $n$ distinct ultrashort optical fields in a fiber 
(\cite{WZY}). 
(See \eqref{eq:WZY} in Section~\ref{section:Background}.)
\par 
Examples of \eqref{eq:apde} 
satisfying (F1)-(F2) and (F3) with non-vanishing 
nonlocal terms have their origin in 
geometric dispersive partial differential equations (PDEs) 
having been investigated in \cite{DW2018,DZ2021,onoderamomo}. 
The geometric equations describe the evolution of a map 
 $u(t,\cdot):M \to N$, 
 where $M$ is a Riemannian manifold and 
 $N$ is a K\"ahler (or para-K\"ahler) manifold. 
 It can be also said that they describe a curve flow on $N$ if $M=\RR$.
 Roughly speaking, each of them 
 can be transformed to 
 a system of nonlinear fourth-order dispersive PDEs 
 for complex-valued functions (including the case of a single equation) 
 if $M=\RR$ with Euclidean metric, 
 and  the derived system satisfies the structure of 
  \eqref{eq:apde} with (F1)-(F3) under some geometric assumptions on 
  the K\"ahler manifold $N$. 
The component $n$ of \eqref{eq:apde} in this context   
    is equivalent to the complex dimension of  $N$.
  The transformation can be comprehensively regarded as a kind of 
   the so-called generalized Hasimoto transformation. 
(See \eqref{eq:Ondq2}, \eqref{eq:2241b}, and \eqref{eq:2241a} 
in Section~\ref{section:Background}.)
\par 
Our goal of this paper is to show \eqref{eq:apde}-\eqref{eq:adata} 
is time-locally well-posed for initial data in a Sobolev space 
with high regularity. 
This is an attempt to present a framework that can solve the 
initial value problem for the examples mentioned above  
comprehensively. 
This is also an attempt to interpret the solvable structure of the above 
geometric dispersive PDEs for curve flows with values into $N$ of complex-dimension $n\geqslant 2$, 
in the level of the system \eqref{eq:apde} for $\mathbb{C}^n$-valued functions. 
Applying the energy method based on the idea of a gauge transformation 
and Bona-Smith approximation technique, 
we prove \eqref{eq:apde}-\eqref{eq:adata} 
is time-locally well-posed in Sobolev space $H^m(\RR;\mathbb{C}^n)$ 
for integer $m\geqslant 4$ (Theorem~\ref{theorem:lwp}). 
Time local well-posedness for systems 
 (except for the case of a single equation)
of fourth-order dispersive PDEs for complex-valued functions 
 with nonlinearity involving derivatives up to second order 
 seems to be established for the first time in this paper, 
 with or without nonlocal terms. 
 See Section~\ref{section:results} for other contributions of our results and 
 for related known results.
 \par 
 The strategy and the idea to prove our main results (Theorem~\ref{theorem:lwp}) 
 are outlined in Section~\ref{section:idea}.  
The idea of the gauge transformation   
is to bring out the local smoothing effect of dispersive equations on 
$\RR$ and overcomes the difficulty of the 
loss of derivatives occurred from the nonlinear terms 
$F_j^1(Q,\p_x^2Q)$ and $F_j^2(Q,\p_xQ)$ with the conditions (F2) and (F3). 
The idea is motivated from \cite{CO2} 
which investigated a fourth-order geometric dispersive PDE for curve flows on 
a compact K\"ahler manifold. 
For better readability of the idea and our proof, we additionally illustrate the idea 
with the initial value problem for a linear dispersive 
PDE for complex-valued functions, showing Proposition~\ref{prop:linear}. 
On the other hand, 
the Bona-Smith approximation technique is often useful to derive  
the continuous dependence of the solution with respect to the initial data, 
which is also the case for our problem. 
Not only that, we adopt the technique 
to construct a time-local solution, 
the difficulty of which comes from the presence of nonlocal terms in $F_j^3(Q,\p_xQ)$ with 
the condition (F3). 
See Section~\ref{section:idea} and Remark~\ref{remark:bs} in Section~\ref{section:local} 
for the detail. 
\par 
The framework presented in this paper 
is certainly applicable to all the examples mentioned above comprehensively, 
whereas Theorems~\ref{theorem:lwp}  may still has a room for improvement 
from the viewpoint of mathematical analysis of nonlinear PDEs.     
Our proof heavily relies on the idea of the gauge transformation in \cite{CO2} 
and the local smoothing effect for dispersive PDEs brought out via it  is not sharp  
as is pointed out in \cite{CO2}.  
If we can make full use of the smoothing effect via another method and avoid any obstructions 
due to the presence of nonlocal terms,   
then the assumption on the regularity of the data   
will be improved or the conditions on the nonlinearity will be relaxed. 
Moreover, 
it seems that 
our proof of Theorem~\ref{theorem:lwp} handles   
\eqref{eq:apde}  as if 
it is close to $n$-pieces of single equations for complex-valued functions 
 which can be investigated separately, and does not make use of 
 the structure of \eqref{eq:apde} as a system.  
For example, if we can 
provide a classification of \eqref{eq:apde} 
in terms of the regularity of the Sobolev space  
based on the dispersion coefficients and other coefficients of nonlinear terms, 
it will be more interesting.   
These directions are not pursued in this paper, although they seem to be worth 
investigating.  
\par 
The organization of the present paper is as follows: 
In Section~\ref{section:Background}, we review the background 
of \eqref{eq:apde} satisfying (F1)-(F3) in more detail. 
In Section~\ref{section:results}, 
we state Theorem~\ref{theorem:lwp} and the contribution.
In Section~\ref{section:idea} and Appendix, we illustrate the strategy and the idea of the 
proof of Theorem~\ref{theorem:lwp}.   
In Sections~\ref{section:local}-\ref{section:prooflw}, 
we complete the proof of Theorem~\ref{theorem:lwp}. 
 \section*{Notation used throughout this paper}
\label{section:notation} 
      Different positive constants are sometimes denoted by the same $C$ for simplicity, 
      if there seems to be no confusion. 
      Expressions such as $C=C(\cdot,\ldots,\cdot)$, 
      $C_k=C_k(\cdot,\ldots,\cdot)$, and $D_k=D_k(\cdot,\ldots,\cdot)$ are 
      used to show the dependency on quantities appearing in parenthesis. 
      Other symbols to denote a constant are explained on each occasion.
   For nonnegative integers $j$ and $k$, 
   the set of integers $\ell$ with $j\leqslant \ell\leqslant j+k$
   is denoted by $\left\{j,\ldots,j+k\right\}$. 
   The partial differentiation for functions is written by 
   $\p$ or the subscript, e.g., $\p_xf$, $f_x$.
   \par 
   For any 
   $z={}^{t}(z_1,\ldots,z_n)$ and $w={}^{t}(w_1,\ldots,w_n)$ 
   in $\mathbb{C}^n$, their inner product is defined by 
   $z\cdot w=\displaystyle\sum_{j=1}^{n}z_j\overline{w_j}$, 
   and the norm of $z$ is by 
   $|z|=(z\cdot z)^{1/2}$. 
   (Although the same $|\cdot|$ is often used to denote the absolute value 
   of a complex number in $\mathbb{C}$, 
   the author expects it does not cause great confusion.)  
   Moreover, we set  
      \begin{align}
      g(z)&=|z|^{2} \quad \text{for} \quad z\in \mathbb{C}^n, 
      \label{eq:gz}
      \end{align}
      which will be used for readability 
      of the role of (F1) and our gauge transformation.
   \par 
  The $L^2$-space of $\mathbb{C}^n$-valued functions on $\RR$ is denoted by 
  $L^2(\RR;\mathbb{C}^n)$ being the set of all measurable functions 
    $f={}^{t}(f_1,\ldots,f_n):\RR\to \mathbb{C}^n$ such that 
    $$
    \|f\|_{L^2}:=\left(
    \int_{\RR}|f(x)|^2\,dx
    \right)^{1/2}
    =\left(
    \sum_{j=1}^n
      \int_{\RR}f_j(x)\overline{f_j(x)}\,dx
      \right)^{1/2}
    <\infty.
    $$
  The $L^2$-type  Sobolev space for a nonnegative integer $k$ is denoted by 
  $H^k(\RR;\mathbb{C}^n)$  
   being the set of all measurable functions 
   $f={}^{t}(f_1,\ldots,f_n):\RR\to \mathbb{C}^n$ such that 
  $\p_x^{\ell}f\in L^2(\RR;\mathbb{C}^n)$ for  
  all $\ell\in \left\{0,\ldots,k\right\}$. 
  The norm $\|f\|_{H^k}$ of $f\in H^k(\RR;\mathbb{C}^n)$ 
  is defined by  
  $$ 
   \|f\|_{H^k}:=\left(
   \sum_{\ell=0}^{k}\|\p_x^{\ell}f\|_{L^2}^2
   \right)^{1/2}.
   $$ 
   Moreover, $H^{\infty}(\RR;\mathbb{C}^n)$ denotes the intersection 
   of all $H^k(\RR;\mathbb{C}^n)$ for nonnegative integers $k$. 
Furthermore, $C([t_1,t_2];H^k(\RR;\mathbb{C}^n))$ denotes the 
Banach space of $H^k(\RR;\mathbb{C}^n)$-valued continuous functions on 
the interval $[t_1,t_2]$ with the norm 
$\|Q\|_{C([t_1,t_2];H^k)}:=\displaystyle\sup_{t\in [t_1,t_2]}\|Q(t,\cdot)\|_{H^k}$.
 \section{Background of \eqref{eq:apde}}
 \label{section:Background}
 This section aims at reviewing  
 the background of \eqref{eq:apde}  
 satisfying (F1)-(F3) more concretely. 
 \par 
 First, \eqref{eq:apde} is a multi-component extension of the 
 single nonlinear dispersive partial differential equation (PDE):  
  \begin{align}
  (\p_t-i\nu\p_x^4-i\p_x^2)\psi
  &=
  \mu_1|\psi|^2\psi
  +\mu_2|\psi|^4\psi
  +\mu_3(\p_x\psi)^2\overline{\psi}
  +\mu_4|\p_x\psi|^2\psi
  \nonumber
  \\
  &\quad
  +\mu_5\psi^2\overline{\p_x^2\psi}
  +\mu_6|\psi|^2\p_x^2\psi
  \label{eq:4shro}
  \end{align}
  for $\psi=\psi(t,x):\RR\times \RR\to \mathbb{C}$, 
  where $\mu_j$ for $j\in \left\{1,\ldots,6\right\}$ and $\nu\ne 0$ 
  are real constants. 
  The equation arises in relation with  
  the continuum limit of a one-dimensional isotropic Heisenberg ferromagnetic 
  spin chain systems with nearest neighbor bilinear and bi-quadratic 
  exchange interaction (\cite{LPD,PDL}), 
  the continuum limit of a one-dimensional anisotropic Heisenberg
  ferromagnetic spin chain systems with octupole-dipole interaction (\cite{DKA}),  
  the three-dimensional motion of a vortex filament with elliptical deformation effect 
  of the core in an incompressible viscous fluid (\cite{fukumoto,FM}), 
  and the molecular excitations along the hydrogen bonding spine 
  in an alpha-helical protein with
  higher-order molecular interactions(\cite{DL}). 
  \par 
  Second, \eqref{eq:apde} is a fourth-order extension 
  of the system of nonlinear Schr\"odinger equations which has 
  had much attention to study the 
  interactions of many bodies.  
  The following is the example of \eqref{eq:apde} in this context:  
   \begin{align}
   {\bf q}_t
   &=i\alpha \left(
   \frac{1}{2} 
   {\bf q}_{xx}+{\bf q}{\bf q}^{*}{\bf q}
   \right)
   -\ep \Biggl[
     \dfrac{1}{2}{\bf q}_{xxx}
     +\dfrac{3}{2}({\bf q}_x{\bf q}^{*}{\bf q}
     +{\bf q}{\bf q}^{*}{\bf q}_x)
     \Biggr]
  \nonumber
  \\&\quad
   +i\gamma
   \Biggl[
   \dfrac{1}{2}{\bf q}_{xxxx}
   +{\bf q}({\bf q}_x^{*}{\bf q})_x
   +{\bf q}_x{\bf q}_x^{*}{\bf q}
   \nonumber
   \\
   &\qquad\qquad
   +2(
   {\bf q}_{xx}{\bf q}^{*}{\bf q}
   +
   {\bf q}{\bf q}^{*}{\bf q}_{xx}
   )
   +3\left\{
   {\bf q}_{x}{\bf q}^{*}{\bf q}_x
   +
   {\bf q}({\bf q}^{*}{\bf q})^2
   \right\}
   \Biggr]
   \label{eq:WZY}
   \end{align}
   for ${\bf q}(t,x)={}^t(q_1(t,x),\ldots,q_n(t,x)):\RR\times \RR\to \mathbb{C}^n$, 
   where $\gamma\ne 0$, $\alpha$ and  $\ep$ are real constants, 
   ``$*$" denotes the Hermitian transpose. 
   It is pointed out in \cite{WZY} that   
   \eqref{eq:WZY} investigates the wave propagation 
   of $n$ distinct ultrashort optical fields in a fiber, 
   and models the broadband, ultrashort pulses propagation.     
 \par 
 Third, examples of \eqref{eq:apde} satisfying (F1)-(F2) and (F3) 
 with non-vanishing nonlocal terms have their origin in 
 geometric dispersive PDEs:  
  The equation  
  for the so-called generalized bi-Schr\"odinger flow(GBSF) 
  was introduced by Ding and Wang in \cite{DW2018}, 
  which is formulated for  
  $u=u(t,x):(-T,T)\times M \to N$, 
  where $M$ is a Riemannian manifold and 
  $N$ is a K\"ahler (or para-K\"ahler) manifold. 
  \begin{itemize}
   \item 
   When $M=\RR$ (with Euclidean metric) and $N$ is either of $G_{n_0,k_0}$ or $G_{n_0}^{k_0}$, 
   it is revealed in \cite{DW2018} that 
   the equation for GBSF can be equivalently reduced to 
   a  fourth-order matrix nonlinear dispersive partial differential-integral equation 
   for $q=q(t,x):(-T,T)\times \RR\to \mathcal{M}_{k_0\times (n_0-k_0)}$, 
   where $G_{n_0,k_0}$ (resp. $G_{n_0}^{k_0}$) denotes the complex Grassmannian of compact 
   (resp. noncompact) type as a Hermitian symmetric space and 
   $\mathcal{M}_{k_0\times (n_0-k_0)}$ stands for 
   the space of $k_0\times (n_0-k_0)$ complex-matrices. 
   For example, when $N=G_{n_0,k_0}$ for integers $n_0,k_0$ with 
    $1\leqslant k_0<n_0$ being a K\"ahler manifold of complex dimension $n=k_0(n_0-k_0)$, 
     the results in \cite{DW2018} tell that 
    the equation for $q$ can be formulated by
    \begin{align}
    q_t
    &=
    -i\,\alpha
    \biggl\{
    q_{xx}+2qq^{*}q\biggr\}
    +i\,\beta
    \biggl\{
    q_{xxxx}+4q_{xx}q^{*}q+2qq^{*}_{xx}q+4qq^{*}q_{xx}
    \nonumber
    \\
    &\quad
    +2q_xq^{*}_xq+6q_xq^{*}q_x+2qq^{*}_xq_x+6qq^{*}qq^{*}q
    \biggr\}
    \nonumber
    \\
    &\quad
    -2i\,(\beta+8\gamma)
    \biggl\{
    (qq^{*}q)_{xx}+2qq^{*}qq^{*}q
    \nonumber
    \\
    &\qquad
    +q\left(\int_{-\infty}^x
    q^{*}(qq^{*})_sq\,ds
    \right)
    +
    \left(
    \int_{-\infty}^x
    q(q^{*}q)_sq^{*}\,ds
    \right)q
    \biggr\} 
    \label{eq:Ondq2}
    \end{align} 
       for $q=q(t,x):(-T,T)\times \RR\to \mathcal{M}_{k_0\times (n_0-k_0)}$, 
       where $\beta\neq 0$ and $\alpha$ are real constants.  
     In this setting, 
     for any $j\in \left\{1,\ldots,n\right\}$, 
     there exists a unique pair of integers $j_1\in \left\{1, \ldots, k_0\right\}$ and 
     $j_2\in \left\{1, \ldots, n_0-k_0\right\}$ such that 
     $j=(j_2-1)k_0+j_1$. 
     Hence, if we set $Q_j$ to be the $(j_1,j_2)$-component of $q$ 
     for $j=(j_2-1)k_0+j_1\in \left\{1,\ldots,n\right\}$,   
     then the equation for 
     $Q={}^t(Q_1,\ldots,Q_n)$ turns out to be a specialization of \eqref{eq:apde} 
     with (F1)-(F3).  
   \item When $M=\RR$ and $N$ is a Riemann surface, 
    the equation for GBSF can be reduced to 
    a single partial differential-integral equation for $q=q(t,x):(-T,T)\times \RR\to \mathbb{C}$, 
    which is proved in \cite{DZ2021} by using the generalized Hasimoto transformation. 
  The explicit expression is available in \cite[Theorem~5.1]{DZ2021}.
    Looking at the the relation between $u$ and $q$ explained in the proof, 
    we see that, if $N$ is compact, then the partial derivative of the Gaussian curvature at $u(t,x)$ 
    with respect to $x$ is bounded by $|q(t,x)|$ multiplied by a positive constant, and thus 
    the equation for $q$ satisfies 
    \begin{align}
    \hspace{1.5em}(\p_t-i\beta\p_x^4+i\alpha \p_x^2)q
    &=
    O\left(
    |\p_x^2q||q|^2
    +
    |\p_xq|^2|q|
    +
    |\p_xq||q|^3
    +|q|^3+|q|^5
    \right)
    \nonumber
    \\
    &\quad
    +
    \left(\int_{-\infty}^x
    f_{1}(q,\p_xq)(t,y)\,dy
    \right)
    q, 
    \label{eq:2241b}
    \end{align}
    where  $f_{1}(q,\p_xq)=
      O\left(
      |\p_xq|^2|q|+|\p_xq||q|^2+|q|^3+|q|^5
      \right)$. 
     The equation \eqref{eq:2241b} satisfies the structure of 
     \eqref{eq:apde} with (F1)-(F3) for $n=1$.
    Additionally, the explicit expression stated in \cite[Theorem~5.1]{DZ2021} tells that 
    the nonlocal term remains in \eqref{eq:2241b} unless 
    $N$ has a constant Gaussian curvature. 
   \end{itemize} 
  \par
 In recent study \cite{onoderamomo} by the present author, 
 a similar fourth-order geometric dispersive PDE has been investigated, which reads
 \begin{alignat}{2}
   &u_t
    =
    a\,J_u\nabla_x^3u_x
    +
    \lambda\, J_u\nabla_xu_x
    +
    b\, R(\nabla_xu_x,u_x)J_uu_x
    +
    c\, R(J_uu_x,u_x)\nabla_xu_x
  \label{eq:pde}
  \end{alignat}
  for $u=u(t,x):\RR\times \RR\to N$, where 
  $a\ne 0$, $b$, $c$, $\lambda$ are real constants, 
  $N$ is a general compact K\"ahler manifold of complex dimension $n$
  with complex structure $J$, K\"ahler metric $h$, and with  
  the Levi-Civita connection $\nabla$ and the Riemann curvature tensor $R$ 
  associated to $h$.   
  (See e.g., \cite{onodera4,onoderamomo}, 
  for the details on the geometric setting of terms in \eqref{eq:pde}.) 
 Developing the generalized Hasimoto transformation,  
 the author in \cite{onoderamomo} has shown that 
 \eqref{eq:pde} can be transformed to a system for
 complex-valued functions $Q_1,\ldots,Q_n$. 
 If $n\geqslant 2$, then  
 the equation for each $Q_j$ satisfies 
 \begin{align}
  (\p_t-ia\p_x^4-i\lambda \p_x^2)Q_j
  &=
  O\left(
  |\p_x^2Q||Q|^2
  +
  |\p_xQ|^2|Q|
  +
  |\p_xQ||Q|^3
  +|Q|^3
  \right)
  \nonumber
  \\
  &\quad
  +\sum_{r=1}^{n}
  \left(\int_{-\infty}^x
  f_{j,r}(Q,\p_xQ)(t,y)\,dy
  \right)
  Q_r, 
  \label{eq:2241a}
  \end{align}
  where
  \begin{align}
  f_{j,r}(Q,\p_xQ)
  &=
  O\left(
  |\p_xQ|^2|Q|+|\p_xQ||Q|^2+|\p_xQ||Q|^3+|Q|^3
  \right).
  \label{eq:32241b}
  \end{align}
   The system of \eqref{eq:2241a} for 
   $j\in \left\{1,\ldots,n\right\}$ satisfies the structure of 
      \eqref{eq:apde} with (F1)-(F3). 
  \begin{remark}
  When $N$ is imposed to be locally Hermitian symmetric,   
   \eqref{eq:pde} coincides with 
    the equation for GBSF
    under an assumption on coefficients of the equation, 
    which is proved in \cite{onodera4}.  
    Note that the expression of the right hand side of \eqref{eq:2241a} with \eqref{eq:32241b}
    can change 
    depending on additional assumptions on $N$, 
    where the nonlocal terms are rewritten by the fundamental theorem of calculus. 
    One may notice that the structure of  
    \eqref{eq:2241a} with \eqref{eq:32241b} is slightly different from \eqref{eq:2241b}, 
    even though $G_{n_0,k_0}$ is also locally Hermitian symmetric. 
    However, it is not inconsistent by the above reason, which has been discussed in \cite{onoderamomo}.
  \end{remark}
\section{Main theorem}
\label{section:results}
Our main results is now stated as follows: 
\begin{theorem}
 \label{theorem:lwp}
 Let $m$ be an integer satisfying $m \geqslant 4$. 
 Then the initial value problem 
 \eqref{eq:apde}-\eqref{eq:adata} 
 is time-locally well-posed in $H^m(\RR;\mathbb{C}^n)$, that is, the following assertions hold:
 \begin{enumerate}
 \item[(i)] (Existence and uniqueness.) 
 For any $Q_0\in H^m(\RR;\mathbb{C}^n)$, there exists a time  
 $T=T(\|Q_0\|_{H^4})>0$ and a unique solution 
 $Q\in C([-T,T]; H^{m}(\RR;\mathbb{C}^n))$
 to \eqref{eq:apde}-\eqref{eq:adata}.
 \item[(ii)] (Continuous dependence with respect to the initial data.) 
 Suppose that $T>0$ and $Q\in C([-T,T];H^m(\RR;\mathbb{C}^n))$ are respectively 
 the time and the unique solution 
 to \eqref{eq:apde} with initial data $Q_0$ obtained in the above part $(\mathrm{i})$. 
 Fix  $T^{\prime}\in (0,T)$. 
 Then for any $\eta>0$, there exists $\delta>0$ such that for any 
 $\widetilde{Q_0}\in H^m(\RR;\mathbb{C}^n)$ satisfying 
 $\|Q_0-\widetilde{Q}_0\|_{H^m}<\delta$, 
 the unique solution $\widetilde{Q}$ 
 to \eqref{eq:apde} with initial data $\widetilde{Q}_0$ exists on 
 $[-T^{\prime},T^{\prime}]\times \RR$ and 
 satisfies 
 $\|Q-\widetilde{Q}\|_{C([-T,T^{\prime}];H^m)}<\eta$.
 \end{enumerate}
 \end{theorem}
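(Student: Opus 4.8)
The plan is to reduce the whole statement to a single tame \emph{a priori} estimate in $H^m$, proved by the energy method together with a gauge transformation, and then to promote it to existence, uniqueness and continuous dependence by a Bona--Smith approximation. Fix $m\geqslant4$ and suppose first that $Q\in C([-T,T];H^\infty(\RR;\mathbb{C}^n))$ solves \eqref{eq:apde}--\eqref{eq:adata}; the goal is to bound $\|Q(t)\|_{H^m}$ on a time interval depending only on $\|Q_0\|_{H^4}$. Writing $v_j=\p_x^mQ_j$, differentiate \eqref{eq:apde} $m$ times and compute $\tfrac{d}{dt}\sum_{j}\|v_j\|_{L^2}^2$. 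Since $M_a,M_b,M_\lambda$ are real and diagonal, the linear part $iM_a\p_x^4+M_b\p_x^3+iM_\lambda\p_x^2$ contributes nothing after integration by parts. The nonlocal term is harmless at top order: by \eqref{eq:F31} one has $F_{j,r}^{3,A}(Q,\p_xQ)\in L^1(\RR)$ whenever $Q\in H^4$, so its primitive $\int_{-\infty}^{x}F_{j,r}^{3,A}(Q,\p_xQ)(t,y)\,dy$ lies in $L^\infty(\RR)$ and $\p_x^{m}F_j^3$ never puts more than $m$ derivatives on a single factor, whence the corresponding term in the energy identity is bounded by $C(\|Q\|_{H^4})\,\|Q\|_{H^m}^2$.

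The real difficulty is that $\p_x^{m}F_j^1$ and $\p_x^{m}F_j^2$ produce terms of the schematic form $(\text{coefficient in }Q,\p_xQ,\p_x^2Q)\cdot\p_x^{m+1}Q\cdot\overline{\p_x^{m}Q}$, i.e.\ a loss of one derivative relative to $\sum_j\|v_j\|_{L^2}^2$. To neutralize them I would adopt the gauge transformation of \cite{CO2} (illustrated on a linear model in Proposition~\ref{prop:linear}): replace $v_j$ by $w_j=e^{i\phi_j}\bigl(v_j+(\text{correction of order }\leqslant m-1\text{ in }\p_x)\bigr)$, with a real phase $\phi_j=\phi_j(t,x)$ taken as a suitable primitive in $x$ of the offending coefficient---morally a component-dependent multiple of $\int_{-\infty}^{x}g(Q(t,y))\,dy$ with $g$ as in \eqref{eq:gz}---so that in the equation satisfied by $w_j$ the loss-of-one-derivative contributions cancel, or are recast as quantities absorbed by the local smoothing estimate for $\p_t-iM_a\p_x^4$ (which gains $3/2$ derivatives in a weighted $L^2_{t,x}$ norm). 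One must check that the transformation introduces no uncontrollable new terms at the orders $\p_x^{m+2}Q$ and $\p_x^{m+3}Q$. Adding the resulting modified energy identity for $\sum_j\|w_j\|_{L^2}^2$ to the local smoothing estimate, and using $H^4(\RR)\hookrightarrow W^{3,\infty}(\RR)$ to bound every coefficient (and $\p_t\phi_j$, which turns out to be local) by $\|Q\|_{H^4}$, I expect a differential inequality of the form $\tfrac{d}{dt}N_m(t)\leqslant\Lambda(\|Q(t)\|_{H^4})\,\bigl(N_m(t)+1\bigr)$ for a quantity $N_m(t)$ equivalent to $\|Q(t)\|_{H^m}^2$; integrating it gives the bound on $[-T,T]$ with $T=T(\|Q_0\|_{H^4})$, and the analogous (gauged) estimate for the difference of two solutions yields uniqueness in $C([-T,T];H^m)$.

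Granting the \emph{a priori} estimate, for smooth data $Q_0\in H^\infty$ a local solution is obtained by a regularized or linearized iterative scheme, the uniform lifespan $T=T(\|Q_0\|_{H^4})$ and the uniform $H^m$ bounds being supplied by the estimate; this is precisely the point at which the nonlocal terms $F_j^3$ obstruct a naive contraction argument, so a Bona--Smith-type construction is used instead. For general $Q_0\in H^m$ I would take Bona--Smith mollifications $Q_0^{\ep}$, which satisfy $\|Q_0^{\ep}\|_{H^{m+k}}\lesssim\ep^{-k}\|Q_0\|_{H^m}$ and $\|Q_0-Q_0^{\ep}\|_{H^{m-k}}\lesssim\ep^{k}\|Q_0\|_{H^m}$, solve for the smooth solutions $Q^{\ep}$ on a common interval $[-T,T]$ with uniform $H^m$ bounds, estimate the differences $Q^{\ep}-Q^{\ep'}$ in $L^2$ (an energy estimate carrying the same one-derivative loss, now closed by the uniform higher-order bounds), and exploit the mollifier inequalities above to upgrade $L^2$-convergence into convergence in $C([-T,T];H^m)$; the limit solves \eqref{eq:apde}--\eqref{eq:adata} and lies in $C([-T,T];H^m)$.

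For continuous dependence, if $\widetilde Q_0\in H^m$ satisfies $\|Q_0-\widetilde Q_0\|_{H^m}<\delta$ then the solution $\widetilde Q$ exists on a fixed $[-T',T']$ because the lifespan depends only on the $H^4$ norm, which is close to $\|Q_0\|_{H^4}$ for small $\delta$. To bound $\|Q-\widetilde Q\|_{C([-T',T'];H^m)}$ I would interpose the regularizations $Q^{\ep},\widetilde Q^{\ep}$: first choose $\ep$ so small that $\|Q-Q^{\ep}\|_{C([-T',T'];H^m)}+\|\widetilde Q-\widetilde Q^{\ep}\|_{C([-T',T'];H^m)}<\eta/2$ uniformly for $\widetilde Q_0$ in a neighbourhood of $Q_0$, and then, with $\ep$ now fixed, a Gr\"onwall estimate for the smooth difference $Q^{\ep}-\widetilde Q^{\ep}$ (with constant depending on $\ep$ but not on $\delta$) gives $\|Q^{\ep}-\widetilde Q^{\ep}\|_{C([-T',T'];H^m)}<\eta/2$ once $\delta$ is small. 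The step I expect to be the main obstacle is the design of the gauge transformation in the \emph{a priori} estimate: it has to cancel every loss-of-one-derivative term from $F_j^1$ and $F_j^2$ at once without spoiling the fourth-order dispersion at the intermediate orders, it must be arranged componentwise since \eqref{eq:apde} is a system with $n$ distinct dispersion coefficients $a_1,\dots,a_n$, and the nonlocal terms $F_j^3$ must be tracked throughout the Bona--Smith scheme---which is why the approximation enters the existence proof and not merely the proof of continuous dependence.
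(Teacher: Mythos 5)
Your overall strategy --- a gauged energy estimate in $H^m$ to defeat the derivative loss from $F_j^1$ and $F_j^2$, a Bona--Smith approximation used for the \emph{construction} of the solution precisely because the nonlocal terms $F_j^3$ obstruct passage to the limit under weak convergence, difference estimates in a low norm and in $H^m$ closed by the two mollifier inequalities, and continuous dependence by interposing the regularizations and fixing the mollification parameter before shrinking $\delta$ --- is essentially the paper's proof (Sections~\ref{section:local}--\ref{section:prooflw}). The one step that would fail as written is the form of the gauge. You propose $w_j=e^{i\phi_j}\bigl(v_j+\cdots\bigr)$ with a bounded real phase $\phi_j$ proportional to $\int_{-\infty}^x|Q|^2\,dy$. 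But conjugating $ia_j\p_x^4$ by a zeroth-order multiplication operator produces, at leading order, the third-order term $-4a_j(\p_x\phi_j)\p_x^3$, and symmetrizing this in the energy identity yields $\tfrac32\int\p_x\bigl(-4a_j\p_x\phi_j\bigr)|\p_xw_j|^2\,dx=-6a_j\int(\p_x^2\phi_j)|\p_xw_j|^2\,dx$; with $\p_x\phi_j\sim|Q|^2$ the coefficient is $\p_x(|Q|^2)$, which has no sign and therefore cannot absorb the loss term $+C\int|Q|^2|\p_xw|^2\,dx$ coming from $F_j^1$. The device that works --- and the one the paper uses, cf.\ \eqref{eq:V_j}--\eqref{eq:Phi} and the operator $\Lambda=I+\tilde\Lambda$ in the Appendix --- is an \emph{additive} correction of order $-1$, namely $V_j=\p_x^mQ_j+\frac{L}{4a_j}\Phi\, i\,\p_x^{m-1}Q_j$ with $\Phi=\int_{-\infty}^{x}|Q|^2\,dy$: its commutator with $ia_j\p_x^4$ produces the second-order term $L\p_x\{|Q|^2\p_x(\p_x^mQ_j)\}$, whose contribution $-L\int|Q|^2|\p_xV_j|^2\,dx$ dominates the loss once $L$ is large. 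Your component-dependent factor $1/a_j$ and the primitive of $g(Q)$ are exactly right; only the multiplicative exponential is not.

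Two smaller corrections. First, the smooth approximate solutions are not produced by an iteration for the original equation: the paper adds a parabolic term $\ep^5\p_x^4$ as in \eqref{eq:bpde} and solves by contraction, and the uniform-in-$\ep$ lifespan then comes from the gauged estimate, in which the parabolic term has the good sign. Second, the upgrade from the low-norm difference estimate to convergence in $C([0,T];H^m)$ is not an interpolation from $L^2$ and the uniform $H^m$ bound (that only gives strong convergence below $H^m$): one must run the gauged energy estimate for $\p_x^m(Q^{\mu}-Q^{\nu})$ itself, where terms like $\|Q^{\nu}\|_{H^{m+j}}\lesssim\nu^{-j}$ ($j\leqslant4$) appear and are compensated by the factor $\nu^5-\mu^5$ from the parabolic terms and by the decay $\|Q^{\mu}-Q^{\nu}\|_{C([0,T];H^1)}\lesssim\nu^{m-1}+\nu^{4}$ from the low-norm estimate. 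Your two mollifier inequalities are indeed the ingredients, but they enter through this bookkeeping rather than through interpolation.
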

 We state the contribution of Theorem~\ref{theorem:lwp} and related results. 
\par 
 First, 
to the best of the author's knowledge, 
 no previous results which established well-posedness of 
 \eqref{eq:apde}-\eqref{eq:adata} for $n\geqslant 2$   
 are available with or without nonlocal terms. 
 It might be better to mention that 
 the recent study by Malham \cite{Malham} has succeeded to  
 construct a time-local solution to 
 the following matrix nonlinear fourth-order dispersive PDE: 
   \begin{align}
   q_t&=
   \mu_2q_{xx}+\mu_3q_{xxx}+\mu_4q_{xxxx}
   +2\mu_2 qq^*q
   +3\mu_3\left(
   q_xq^*q+qq^{*}q_x
   \right)
   \nonumber
   \\
   &\quad
   +\mu_4
   \bigl(
   4q_{xx}q^{*}q+2qq^{*}_{xx}q+4qq^{*}q_{xx}
   +2q_xq^{*}_xq
   \nonumber
   \\
   &\qquad \qquad 
   +6q_xq^{*}q_x+2qq^{*}_xq_x+6qq^{*}qq^{*}q
   \bigr), 
   \label{eq:mal}
   \end{align} 
   where $q=q(t,x)$ is a $k_0\times m_0$ complex-matrix-valued function of 
   $t,x\in \RR$, and 
   $\mu_4\neq 0,\mu_2,\mu_3\in \mathbb{C}$ are constants such that 
   $\mu_2,\mu_4\in i\RR$ and $\mu_3\in \RR$. 
   By the same identification as that for  \eqref{eq:Ondq2}, 
   the equation \eqref{eq:mal} can be identified with the 
   example of
   \eqref{eq:apde} where $n=k_0m_0$. 
   Meanwhile, it does not seem that the initial value problem for \eqref{eq:mal}  
   has been investigated in \cite{Malham}. 
 \par 
 Second, it seems that time-local existence results have been expected 
 by the authors in \cite{DW2018} concerning \eqref{eq:Ondq2}. 
 Indeed, it is commented in \cite[p.190]{DW2018}
 as follows: 
 \begin{quotation}
 When $\alpha=1$ and $\beta=\gamma=0$, 
 one sees that Eqs. (62), (63) and (64) return to the standard matrix NLS 
 and matrix NLH respectively. 
 We believe that for the matrix nonlinear Schr\"odinger-like equation (62) on 
 $\mathfrak{u}(n)$, a similar property to the standard matrix NLS that one may obtained by using the geometric energy method (refer to [29, 43]) is the short time existence of solutions.
 \end{quotation}
 The equation (62) in the above quotation corresponds to \eqref{eq:Ondq2}.
 The part of the time-local existence of a solution in Theorem~\ref{theorem:lwp} 
 is not inconsistent with their belief. 
 Further, 
 Theorem~\ref{theorem:lwp} also presents the uniqueness and the continuous dependence 
 with respect to the initial data. 
   \begin{remark}
   \label{remark:gpde}
   The previous study in \cite{CO2} investigated another but similar 
   fourth-order geometric dispersive PDE for curve flows on 
   a compact K\"ahler manifold $N$, 
   showing that the initial value problem possesses 
   a time-local solution $u:[0,T]\times \RR\to N$ 
   for initial data $u_0\in C(\RR;N)$ with $\p_xu_0\in H^m(\RR;TN)$ 
   and $m\geqslant 4$, 
   where  $H^m(\RR;TN)$ is a kind of
   geometric Sobolev space on $\RR$ with the norm $\|\cdot\|_{H^m(\RR;TN)}$ 
   and $T>0$ depends on $\|\p_xu_0\|_{H^4(\RR;TN)}$. 
  The proof in \cite{CO2} was based on the 
  geometric energy method, and the 
  method seemed to be valid also for \eqref{eq:pde}.
   In view of the claim and the relation $\|\p_xu(t)\|_{H^m(\RR;TN)}=\|Q(t)\|_{H^m}$ 
   via the generalized Hasimoto transformation, 
      the author expected that a solvable structure of \eqref{eq:pde} is inherited in some sense 
      to the systems \eqref{eq:Ondq2} and \eqref{eq:2241a}
      with initial data $Q_0\in H^m(\RR;\mathbb{C}^n)$. 
      Theorem~\ref{theorem:lwp} is not inconsistent with the expectation. 
   \end{remark}
  \begin{remark}
  \label{remark:equiv}
  We should comment that Theorem~\ref{theorem:lwp} 
  still does not immediately contribute to solve the initial value 
  problem for \eqref{eq:pde}.  
 To be exact, the generalized Hasimoto transformation does not ensure the 
  equivalence of the initial value problem for \eqref{eq:pde} and 
  that for the system derived after transformation in general, 
  since constructing the inverse of the transformation 
   seems to be a nontrivial task and since the transformations 
   to derive \eqref{eq:Ondq2}, \eqref{eq:2241b}, \eqref{eq:2241a} 
   impose the existence of a fixed edge point $u(t,\infty)\in N$ or 
   $u(t,-\infty)\in N$.   
   However, the author expects that future work as a continuation of 
   this paper on \eqref{eq:apde} 
   will present an important insight on some unsolved problems for \eqref{eq:pde}, 
   such as finding conditions for time-global existence of a solution. 
      \end{remark}  
 \par 
 Third, 
 well-posedness for single fourth-order dispersive PDEs for 
 complex-valued functions 
 with nonlinearity 
 involving derivatives up to second order   
  but without nonlocal terms  
  has already been extensively studied by several authors   
  (\cite{HHW2006,HHW2007,HJ2005,HJ2007,HJ2011,RWZ,segata2003,segata2004,segata}). 
  In this direction, 
  the contribution of Theorem~\ref{theorem:lwp} seems to be rather limited. 
  To see it,  
  we review some known results related to Theorem~\ref{theorem:lwp}:  
  A series of studies by Segata (\cite{segata2003,segata2004}) 
  and Huo and Jia (\cite{HJ2005,HJ2007}) 
  established local well-posedness 
  for \eqref{eq:4shro} in $H^s(\RR)$ with $s>1/2$ 
  (and in $H^{1/2}(\RR)$ under an additional condition on coefficients of the equation), 
  which was proved by applying the smoothing effect via the Fourier restriction norm method. 
Theorem~\ref{theorem:lwp} presents local well-posedness 
  for \eqref{eq:apde} with more general nonlinearities 
  than \eqref{eq:4shro}, but imposes higher regularity on the solution. 
  Segata in \cite{segata} also showed local well-posedness of 
   \eqref{eq:4shro} in $H^s(\TT)$ with $s\geqslant 4$ by analyzing the structure 
   of the nonlinearity in more detail via the so-called modified energy method, 
   where $\TT$ is the one-dimensional flat torus and thus 
   the above smoothing effect on $\RR$ is absent.  
  Hirayama et al. in \cite[Theorem~1.3]{HIT} established local well-posedness in 
  $H^s(\RR)$ with $s\geqslant 1/2$ for a one-dimensional 
  fourth-order dispersive PDE 
  with first- and second-derivative nonlinearities, 
  by developing the method to apply the local smoothing effect, 
  which improved the class of the solution and generalizes the nonlinearities  
  handled in \cite{segata2003,segata2004,HJ2005,HJ2007}
  (see \cite[Remark~1.5.]{HIT}). 
  We note that their generalization of the nonlinearities  
  is partly different from that given by (F1)-(F3) 
  in our paper. 
  For example, restricting to the case $n=1$ where \eqref{eq:apde} is a 
      single equation, 
      the nonzero nonlinear term of $O(|\p_x^2Q|^3)$ 
    satisfies the assumption
    in \cite[Theorem~1.1]{HIT}  
    but does not satisfy (F1) in our paper. 
    On the other hand,  
    the quadratic type nonlinear term of $O(|Q||\p_xQ|)$ is not considered in \cite{HIT} but 
    satisfies (F1)-(F3)
    where $n=j=1$, $F_j^2(u,v)=O(|u||v|)$ and 
    $F_j^1\equiv F_j^3\equiv 0$.
\par
 Apart from the contribution of Theorem~\ref{theorem:lwp}, 
  it might be better to state the following  
  related results: 
 \begin{enumerate}
 \item[(i)]  
 Some multi-component systems of 
  fourth-order nonlinear dispersive PDEs 
  for complex-valued functions have been considered in \cite{GS,tarulli},  
  and local and global existence of a unique solution, and scattering properties 
  have been investigated, 
  where neither nonlinear terms involving derivatives nor 
  nonlocal terms are included in their systems. 
 \item[(ii)]  Similar nonlocal nonlinearities  have already 
  appeared in the study of well-posedness for one- or two-dimensional  
  nonlinear Schr\"odinger equations with physical background
   (e.g., \cite{CK,HO1995,NO,OYY})
  and the Davey-Stewartson system 
  (e.g., \cite{chihara1999,GiSa,hayashi1997,LP}). 
  Moreover, similar nonlinearities also appear in the study of 
  geometric dispersive PDEs for the so-called Schr\"odinger flow 
  and for the third-order analogues via the generalized Hasimoto transformation 
  (e.g., \cite{CSU,onodera0,RRS,SW2011}).
\item[(iii)]  Some single fourth-order dispersive PDEs for complex-valued 
functions with nonlinearities involving 
  derivatives up to third order have been investigated by 
   \cite{HIT,HJ2011,RWZ}, and local well-posedness 
  in Sobolev spaces (\cite{HIT,HJ2011}) 
  and global well-posedness in Sobolev spaces and 
   in modulation spaces (\cite{RWZ}) were established.  
 Moreover, multi-dimensional case has been also investigated in     
   \cite{HHW2007,HJ2011,RWZ}.   
  Meanwhile, all of these results   
  impose the smallness of the initial data, if a third-order derivative is involved in their 
  nonlinearities.  
   \end{enumerate}
\section{Idea of the proof of Theorem~\ref{theorem:lwp}}
\label{section:idea}
Our proof of Theorem~\ref{theorem:lwp} is based on the parabolic regularization 
 and the energy method combined with 
 a gauge transformation.  
 The assumption $m\geqslant 4$ on the regularity of the solution comes from 
 the requirement for the above method to work.
 More concretely, 
 a loss of derivative of order one occurs from the nonlinear terms 
 $F_j^1(Q,\p_x^2Q)$ and $F_j^2(Q,\p_xQ)$ 
 (not from $F_j^3(Q,\p_xQ)$) in general, 
 which prevents 
 the classical $L^2$-based energy method from working. 
 We overcome the difficulty by introducing a gauge transformation of the form  
 \eqref{eq:V_j}-\eqref{eq:Phi} 
 (and analogically \eqref{eq:Z_j}-\eqref{eq:Phia} and 
 \eqref{eq:Z_12}-\eqref{eq:Phi1}), 
 which is a method to bring out the local smoothing effect 
 for dispersive PDEs on $\RR$.
 Roughly speaking, 
 the gauge transformation behaves as a summation of the identity 
  and a pseudodifferential operator of order $-1$, and the commutator with 
  the fourth-order principal part of \eqref{eq:apde}  
  generates a second-order elliptic operator 
  which absorbs the loss of derivative. 
  Additionally, in the actual proof of Theorem~\ref{theorem:lwp}, 
  the gauge transformation we call here acts on images of the 
  partial differentiation $\p_x$, 
  and thus explicit pseudodifferential calculus is not required.  
We choose the strategy   
by following the idea in \cite{CO2} to solve a fourth-order 
geometric dispersive PDE for curve flows 
on a compact K\"ahler manifold. 
Indeed, the form of \eqref{eq:V_j}-\eqref{eq:Phi} looks 
extremely similar to that  in \cite[Eqn.(40)]{CO2}. 
\par 
 Notably, using the above energy method finds 
 another difficulty to show Theorem~\ref{theorem:lwp}, 
 which is due to the presence of nonlocal terms in $F_j^3(Q,\p_xQ)$.   
 More concretely, the energy method with our gauge transformation actually 
 leads to an estimate for the solution to the initial value problem 
 for the parabolic regularized equation uniformly with respect to the coefficient of the 
 added parabolic fourth-order term. 
 This ensures that the family of the parabolic regularized solutions 
 subconverges to a limit  
 weak-star in $L^{\infty}H^m$ 
 and (strongly) in $CH^{m-1}_{\text{loc}}$. 
 However, the convergence seems to be insufficient to show that 
the nonlocal terms for the regularized solution converges to those 
for the limit in the sense of distribution. 
Hence, an additional argument is required to conclude that  
the limit is actually a solution to the original problem. 
(See Remark~\ref{remark:bs} also.)
To avoid the delicate argument, 
following mainly \cite{BS,ET,II,Mietka,segata}, 
we adopt the Bona-Smith type approximation of the 
original equation to construct parabolic regularized solutions, 
which  then ensures a strong convergence to a solution in $CH^m$. 
The use of it also leads to the continuous dependence of the 
solution with respect to the initial data. 
\par 
Additionally, we point out that the well-posedness theory 
for single linear dispersive PDEs  
for complex-valued functions 
is helpful in understanding the idea of our proof and the conditions (F1)-(F3).  
To state it, consider the initial value problem 
of the form 
\begin{alignat}{2}
(\p_t-ia\p_x^4-b\p_x^3)u
  &=
  i\p_x\{\beta_1(t,x)\p_xu\}
  +
  i\p_x\{\beta_2(t,x)\overline{\p_xu}\}
\nonumber
\\
&\quad 
  +
  \gamma_1(t,x)\p_xu+\gamma_2(t,x)\overline{\p_xu}
& \quad
& \text{in}
  \quad
  \mathbb{R}\times\mathbb{R},
\label{eq:pde1}
\\
u(0,x)
  &=
  u_0(x)
& \quad
& \text{in}
  \quad
  \mathbb{R},
\label{eq:data1}
\end{alignat}
where 
$u(t,x)$ is a complex-valued unknown function of $(t,x) \in \mathbb{R}\times\mathbb{R}$, 
$a\ne0$ and $b$ are real constants, 
$\beta_1(t,x)$, $\beta_2(t,x)$, $\gamma_1(t,x)$, 
$\gamma_2(t,x) \in C\bigl(\mathbb{R};\mathscr{B}^\infty(\mathbb{R})\bigr)$ 
are complex-valued, 
$\mathscr{B}^\infty(\mathbb{R})$ 
is the set of all bounded smooth functions on $\mathbb{R}$ 
whose derivatives of any order are all bounded, 
and 
$u_0(x)$ is an initial data in $L^2$. 
Under the setting where $\beta_2(t,x)\equiv \gamma_2(t,x)\equiv 0$ 
and both 
$\beta_1(t,x)$ and $\gamma_1(t,x)$ are independent of $t$, 
the necessary and sufficient condition for $L^2$-well-posedness of 
\eqref{eq:pde1}-\eqref{eq:data1} 
was established by Mizuhara (\cite{Mizuhara})
and Tarama (\cite{Tarama}), where  
more general higher-order linear dispersive PDEs 
were also investigated.  
Restricting our concern to the application to our problem, 
we recall the fact
that 
\eqref{eq:pde1}-\eqref{eq:data1} is $L^2$-well-posed if 
$\beta_1(t,x)$ is real-valued, $\beta_2(t,x)\equiv \gamma_2(t,x)\equiv 0$, 
and there exists a function $\phi(x)\in \mathscr{B}^\infty(\mathbb{R})$ such that 
\begin{equation}
\lvert
\operatorname{Im} \gamma_1(t,x)
\rvert
\leqslant
\phi(x)
\
\text{for any}
\
(t,x)\in\mathbb{R}^2,
\quad
\int_{\mathbb{R}}
\phi(x)
dx
<\infty. 
\label{eq:addd3291}
\end{equation}
The proof  was presented in \cite[Section~2]{CO2} 
for the purpose of illustrating 
the idea of the gauge transformation in \cite[Eqn.(40)]{CO2}
to solve the fourth-order geometric dispersive 
PDE for curve flows on a compact K\"ahler manifold. 
We note that the idea 
also works for \eqref{eq:pde1}-\eqref{eq:data1} with 
$\beta_2(t,x)\not\equiv 0$ or $\gamma_2(t,x)\not\equiv 0$
under the following assumption:  
\begin{proposition}
\label{prop:linear}
Suppose that there exist functions $\phi_A(x), \phi_B(x)\in \mathscr{B}^\infty(\mathbb{R})$ such that  
\begin{align}
&\lvert
\operatorname{Im}\beta_1(t,x)
\rvert
+
\lvert
\beta_2(t,x)
\rvert
\leqslant
\phi_A(x)
\
\text{for any}
\
(t,x)\in\mathbb{R}^2, 
\ \ 
\int_{\mathbb{R}}
\phi_A(x)
\,dx
<\infty,
\label{eq:addd3292}
\\
&
\lvert
\operatorname{Im} \gamma_1(t,x)
\rvert
\leqslant
\phi_B(x)
\
\text{for any}
\
(t,x)\in\mathbb{R}^2,
\ \ 
\int_{\mathbb{R}}
|\phi_B(x)|^2
dx
<\infty.
\label{eq:addd3293}
\end{align}
Then \eqref{eq:pde1}-\eqref{eq:data1} 
is $L^2$-well-posed, that is, for any $u_0\in L^2(\RR;\mathbb{C})$, 
\eqref{eq:pde1}-\eqref{eq:data1} has a unique solution 
$u\in C(\RR;L^2(\RR;\mathbb{C}))$.
\end{proposition}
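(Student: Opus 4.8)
The plan is to establish an a priori $L^2$ bound by a \emph{gauged} energy method, to construct the solution for data in $L^2$ by parabolic regularization, and then to read off uniqueness and continuous dependence from the same bound. Because the equation is linear and the coefficients lie in $C(\RR;\mathscr{B}^\infty(\RR))$, the bound will hold on every interval $[-T,T]$, so the solution will be global, i.e.\ in $C(\RR;L^2)$. First I would look at the naive identity: for a smooth, rapidly decaying solution $u$ of \eqref{eq:pde1}, the operators $ia\p_x^4$ and $b\p_x^3$ are skew-adjoint on $L^2$, so integrating the right-hand side by parts yields
\begin{align*}
\frac{d}{dt}\|u(t)\|_{L^2}^2
&=
2\int_{\RR}(\Im\beta_1)\,|\p_xu|^2\,dx
-2\Re\!\Bigl(i\!\int_{\RR}\beta_2\,(\overline{\p_xu})^2\,dx\Bigr)\\
&\quad
-2\int_{\RR}(\Im\gamma_1)\,\Im(\p_xu\,\overline{u})\,dx
+R(t),\qquad |R(t)|\leqslant C\|u(t)\|_{L^2}^2,
\end{align*}
where the $\gamma_2$-term and the $\Re\beta_1$-term have been absorbed into $R(t)$ after one further integration by parts (this is why they carry no hypothesis in \eqref{eq:addd3292}--\eqref{eq:addd3293}). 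The first two displayed terms are $\leqslant 2\int\phi_A|\p_xu|^2\,dx$ and the third is $\leqslant 2\int\phi_B|\p_xu|\,|u|\,dx$: both represent a genuine loss of one derivative that $\|u\|_{L^2}$ cannot control.

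The remedy, following \cite[Section~2]{CO2}, is to replace $\|u\|_{L^2}$ by the $L^2$-norm of a gauge transform. Modelled on \cite[Eqn.~(40)]{CO2}, I would take $\Lambda=\mathrm{Op}\!\bigl(e^{iq(x,\xi)}\bigr)$, a pseudodifferential operator of order $0$ whose real symbol $q$ has order $-1$ and leading part
\[
q(x,\xi)\ \sim\ -\frac{\chi(\xi)}{4a\,\xi}\int_{-\infty}^{x}\rho(y)\,dy,
\qquad
\rho(y):=C\bigl(\phi_A(y)+\phi_B(y)^2+1\bigr),
\]
with $\chi$ a smooth low-frequency cut-off and $C$ a large constant. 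By \eqref{eq:addd3292}--\eqref{eq:addd3293} one has $\rho\in L^1(\RR)$, hence $q$ is bounded, $\Lambda$ is bounded and boundedly invertible on $L^2$, and $\|\Lambda u\|_{L^2}\simeq\|u\|_{L^2}$. The point of this $q$ is that the commutator $[ia\p_x^4,\Lambda]$ produces, at leading order, the negative elliptic operator $\p_x(\rho(x)\p_x\,\cdot\,)$; after adjoining to $q$ finitely many lower-order corrections that cancel the remaining obstructions (in particular the order-one terms coming from $[b\p_x^3,\Lambda]$, which one checks have purely imaginary leading symbol or decaying real coefficient, hence contribute harmlessly to the real energy), $v:=\Lambda u$ satisfies an estimate of the form
\[
\frac{d}{dt}\|v\|_{L^2}^2+2\int_{\RR}\rho\,|\p_xv|^2\,dx
\ \leqslant\ C\|v\|_{L^2}^2+2\int_{\RR}\phi_A|\p_xv|^2\,dx+2\int_{\RR}\phi_B|\p_xv|\,|v|\,dx .
\]
Since $\rho\geqslant 2\phi_A$ and $\rho\geqslant\phi_B^2$ pointwise, we absorb: $2\int\phi_A|\p_xv|^2\leqslant\int\rho|\p_xv|^2$, and, by the elementary inequality $2ab\leqslant a^2+b^2$, $2\int\phi_B|\p_xv|\,|v|\leqslant\int\rho|\p_xv|^2+\int(\phi_B^2/\rho)\,|v|^2\leqslant\int\rho|\p_xv|^2+C\|v\|_{L^2}^2$. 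Hence $\tfrac{d}{dt}\|v\|_{L^2}^2\leqslant C'\|v\|_{L^2}^2$, and Gronwall together with $\|u\|_{L^2}\simeq\|v\|_{L^2}$ gives $\|u(t)\|_{L^2}\leqslant e^{C'|t|/2}\|u_0\|_{L^2}$; the backward direction is identical since the local smoothing is insensitive to the sign of $a\neq0$.

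Granting the a priori bound, existence and the remaining assertions are routine. For existence I would solve the parabolic regularization $(\p_t-ia\p_x^4-b\p_x^3+\delta\p_x^4)u^{\delta}=(\text{right-hand side of }\eqref{eq:pde1})$, $u^{\delta}(0)=u_0$: the leading symbol $(ia-\delta)\xi^4$ has strictly negative real part and generates an analytic semigroup on $L^2$, with the lower-order terms relatively bounded, so $u^{\delta}\in C([-T,T];L^2)$ exists and is smooth for $t\neq0$. The gauged estimate applies to $u^{\delta}$ uniformly in $\delta$, because $\delta\p_x^4$ only adds the favourable term $-2\delta\|\p_x^2v^{\delta}\|_{L^2}^2$ up to $\delta$-small errors it itself absorbs. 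Then $u^{\delta}$ converges along a subsequence, weak-$*$ in $L^{\infty}_tL^2_x$ and---using \eqref{eq:pde1} to bound $\p_tu^{\delta}$---strongly in $C_{\mathrm{loc}}$ of space--time, to a solution $u$ of \eqref{eq:pde1}--\eqref{eq:data1} in $C_w([-T,T];L^2)$; this upgrades to $C([-T,T];L^2)$ since the gauged energy identity makes $t\mapsto\|u(t)\|_{L^2}$ continuous. Uniqueness follows by applying the a priori bound to the difference of two solutions with identical data (the energy identity for merely $L^2$ solutions being justified by Friedrichs mollification in $x$), and continuous dependence is immediate because the solution map $u_0\mapsto u$ is linear and bounded $L^2\to C([-T,T];L^2)$.

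The hard part is the construction of the gauge $\Lambda$ and the verification that, after the symbol-calculus bookkeeping, \emph{every} error term produced by conjugating the fourth- and third-order principal parts contributes only $0$ or $O(\|u\|_{L^2}^2)$ to the real energy, so that the elliptic gain $\int\rho|\p_xv|^2$ survives intact and is available for the absorptions above. The single arithmetic point that must line up is that the absorbing weight $\rho$ can be chosen in $L^1(\RR)$ while pointwise dominating both $\phi_A$ and $\phi_B^2$---which is possible exactly under \eqref{eq:addd3292}--\eqref{eq:addd3293}, and accounts for the asymmetry ($L^1$ for $\phi_A$, $L^2$ for $\phi_B$) in those hypotheses. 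With the $\beta_2$- and $\gamma_2$-terms inserted as above, the remainder of the argument follows the computation already carried out in \cite[Section~2]{CO2}.
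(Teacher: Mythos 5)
Your strategy is essentially the paper's: a zeroth-order pseudodifferential gauge whose order $-1$ part has symbol proportional to $\Phi(x)\varphi(\xi)/(4a\xi)$ with $\Phi(x)=L\int^x\bigl(\phi_A+|\phi_B|^2\bigr)$, the commutator with $ia\p_x^4$ producing the elliptic absorber $L\p_x\{\phi(x)\p_x\,\cdot\,\}$, Young's inequality to absorb the $\phi_A$- and $\phi_B$-losses, and parabolic regularization plus Gronwall for existence. (The paper takes $\Lambda=I+\tilde{\Lambda}$ rather than $\mathrm{Op}(e^{iq})$, which is a cosmetic difference.) Two points need attention, the first of which is an actual error. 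Your weight $\rho=C(\phi_A+\phi_B^2+1)$ is not in $L^1(\RR)$ because of the $+1$, so $\int_{-\infty}^x\rho\,dy$ grows linearly in $x$, $q$ is unbounded, and the symbol estimates required for $\Lambda$ to be $L^2$-bounded and boundedly invertible fail; the construction collapses at the first step. The $+1$ must be deleted, and nothing you use it for requires it: with $\rho=C(\phi_A+\phi_B^2)$ and $C$ large one still has $\rho\geqslant 2\phi_A$ and $2\phi_B|\p_xv||v|\leqslant \phi_B^2|\p_xv|^2+|v|^2\leqslant C^{-1}\rho|\p_xv|^2+|v|^2$, which is exactly how the paper absorbs these terms. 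This is precisely the ``arithmetic point'' you flag at the end, and as written it does not line up.

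Second, you assert but do not verify the step the paper itself singles out as the new content relative to \cite{CO2}: pushing the anti-linear terms $i\p_x\{\beta_2(t,x)\overline{\p_xu}\}$ and $\gamma_2(t,x)\overline{\p_xu}$ through the gauge. This requires comparing $\overline{\Lambda u}$ with $\Lambda\overline{u}$ (the paper chooses $\varphi$ real and even so that $\overline{\tilde{\Lambda}v}=-\tilde{\Lambda}\overline{v}$), and it generates a genuinely new first-order term $\tfrac{1}{2a}\beta_2(t,x)\Phi(x)\overline{\p_xv}$ alongside $\gamma_2\overline{\p_xv}$; these are then rendered harmless by writing $\overline{\p_xv}\,\overline{v}=\tfrac12\overline{\p_x(v^2)}$ and integrating by parts, using only boundedness of the coefficient and its derivative (which is why no sign or integrability hypothesis on $\gamma_2$ or $\Re\beta_1$ is needed). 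Your parenthetical claim that the leftover order-one commutator contributions are ``purely imaginary or decaying, hence harmless'' is the conclusion of this computation rather than a substitute for it. With the $+1$ removed and this bookkeeping carried out, your argument coincides with the paper's proof.
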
  
The proof is stated in Appendix. 
Although the assumptions 
\eqref{eq:addd3292}-\eqref{eq:addd3293} are still loose 
from the viewpoint of the theory for linear dispersive PDEs,  
they are informative enough to find the way to solve the initial value problem  
for \eqref{eq:apde} including 
\eqref{eq:4shro}, \eqref{eq:WZY}, 
\eqref{eq:Ondq2}, 
\eqref{eq:mal}.  
In fact, we arrived at the conditions (F1) and (F2) 
by observing the above fact 
and the equation in \cite{CO2}((Eqn.(41) with $\ep=0$) 
satisfied by higher-order covariant derivatives of the curve flow, 
and our choice of the gauge transformation to prove  Theorem~\ref{theorem:lwp} is also motivated from that 
of Proposition~\ref{prop:linear} 
where $|Q|^2$ and $|Q|$ play roles as  
$\phi_A$ and $\phi_B$ respectively. 
%
 %
  %
 \section{Uniform estimate for Bona-Smith regularized solutions 
 in $L^{\infty}H^m$.}
  \label{section:local} 
This section aims to obtain uniform estimates for solutions to 
an initial value problem regularized by the 
Bona-Smith approximation.   
Throughout this section, $m$ is supposed be an integer satisfying $m\geqslant 4$.
\par 
To begin with, 
following mainly \cite{BS,ET,II,Mietka,segata},  
we recall the setting of the Bona-smith approximation:
Let $\phi:\RR\to\RR$  be a Schwartz function 
satisfying $0\leqslant \phi(x)\leqslant 1$ on $\RR$ and  
$\phi(x)=1$ on a neighborhood of the origin $x=0$ so that 
$\p_x^k\phi(0)=0$ for all positive integers $k$.
For any $Q_0={}^t(Q_{01},\ldots,Q_{0n})\in H^m(\RR;\mathbb{C}^n)$ and $\ep\in (0,1)$, 
define $Q_0^{\ep}:\RR\to \mathbb{C}^n$ by
$$
\widehat{Q_0^{\ep}}(\xi)
=
\phi(\ep \xi)
\widehat{Q_0}(\xi) 
\quad
(\xi\in \RR), 
$$ 
where $\widehat{Q_0^{\ep}}$ and $\widehat{Q_0}$ denote the 
Fourier transform of $Q_0^{\ep}$ and $Q_0$ respectively.
It follows that $Q_0^{\ep}\in H^{\infty}(\RR;\mathbb{C}^n)$ and 
$Q_0^{\ep}\to Q_0$ in $H^m(\RR;\mathbb{C}^n)$ as $\ep \downarrow 0$. 
Moreover, 
\begin{align}
&\|Q_0^{\ep}\|_{H^m}
\leqslant 
\|Q_0\|_{H^m}, 
\label{eq:bs1}
\\
&\|Q_0^{\ep}\|_{H^{m+\ell}}
\leqslant 
C\ep^{-\ell}
\|Q_0\|_{H^m}
\quad 
(\ell=0,1,2,\ldots),
\label{eq:6092}
\\
&\|Q_0^{\ep}-Q_0\|_{H^{m-\ell}}
\leqslant 
C\ep^{\ell}
\|Q_0\|_{H^m}
\quad 
(\ell=0,1,2,\ldots), 
\label{eq:6093}
\end{align} 
where $C$ is a positive 
constant which depends on $m,k,\phi$, 
but not on $\ep$. 
The set $\left\{Q_{0}^{\ep}\right\}_{\ep\in (0,1)}$ is called a 
Bona-Smith approximation of $Q_0$.   
\par 
For $\ep\in (0,1)$ and $Q_0\in H^m(\RR;\mathbb{C}^n)$,  
we consider the initial value problem for 
the fourth-order parabolic regularized system:   
\begin{alignat}{2}
 \left(
 \p_t+\ep^5 \p_x^4-iM_a\p_x^4-M_b\p_x^3-iM_{\lambda}\p_x^2
 \right)
 Q
  &=F(Q, \p_xQ, \p_x^2Q), 
 \label{eq:bpde}
 \\
  Q(0,x)
   &=
   Q_{0}^{\ep}(x) 
 \label{eq:bdata}
 \end{alignat}
for $Q={}^t(Q_1,\ldots,Q_n):[0,\infty)\times \RR\to \mathbb{C}^n$, 
where $ Q_{0}^{\ep}\in H^{\infty}(\RR;\mathbb{C}^n)$ is given 
by the Bona-Smith approximation of $Q_0$. 
It is not difficult to show that there exists a time 
$T_{\ep}=T(\ep, \|Q_0^{\ep}\|_{H^m})>0$ 
and a unique solution $Q^{\ep}={}^t(Q^{\ep}_1,\ldots,Q^{\ep}_n)\in C([0,T_{\ep}];H^{\infty}(\RR;\mathbb{C}^n))$ to \eqref{eq:bpde}-\eqref{eq:bdata} 
by the standard contraction mapping argument. 
We omit the detail. 
\par 
The goal of this section is to show the following:
  \begin{proposition}
  \label{proposition:preloc}
  Let $m$ be an integer with $m\geqslant 4$. 
  For any $Q_0\in H^m(\RR;\mathbb{C}^n)$, 
  let $\left\{Q^{\ep}\right\}_{\ep\in (0,1)}$ be the 
  family of solutions to \eqref{eq:bpde}-\eqref{eq:bdata}.
  Then, there exists a time $T=T(\|Q_0\|_{H^4})>0$ which 
  is independent of $\ep\in (0,1)$ such that $\left\{Q^{\ep}\right\}_{\ep\in (0,1)}$ is bounded in  $L^{\infty}(0,T;H^m(\RR;\mathbb{C}^n))$.
  \end{proposition}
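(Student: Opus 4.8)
The plan is to prove Proposition~\ref{proposition:preloc} by deriving an energy estimate for $Q^{\ep}$ in $H^m$ that is uniform in $\ep\in(0,1)$, using a gauge transformation to control the loss of one derivative coming from $F_j^1$ and $F_j^2$. First I would differentiate \eqref{eq:bpde} $m$ times in $x$, writing $Q^{\ep}_{(m)}:=\p_x^mQ^{\ep}$, and record the equation satisfied by $Q^{\ep}_{(m)}$: the principal part $\p_t+\ep^5\p_x^4-iM_a\p_x^4-M_b\p_x^3-iM_\lambda\p_x^2$ is unchanged, while the right-hand side acquires terms of the schematic form $F'(Q^{\ep},\p_xQ^{\ep},\p_x^2Q^{\ep})\cdot\p_x^mQ^{\ep}$ plus lower-order contributions already bounded by $\|Q^{\ep}\|_{H^{m-1}}$-type quantities (using (F1)--(F3), the $\p_x^{m+2}$ terms appear only linearly through $F_j^1$ and the $\p_x^{m+1}$ terms only linearly through $F_j^1,F_j^2$, multiplied by coefficients controlled by $|Q^{\ep}|^2$ and $|Q^{\ep}|$ respectively). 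The nonlocal terms $F_j^3$ are harmless at top order: differentiating the integral factor lowers to at most $\p_x^{m-1}$ inside, and the $\p_x^m$ hitting $F_{j,r}^{3,B}(Q)$ is multiplied by a bounded integral, so (F3) gives an $H^m$-bound with no derivative loss.

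The core step is to introduce the gauge-transformed quantity, following \eqref{eq:V_j}--\eqref{eq:Phi}: for each component $j$ I set $V_j^{\ep}$ to be $\p_x^mQ_j^{\ep}$ corrected by a pseudodifferential-type weight built from $\int^x$ of a polynomial in $Q^{\ep}$ --- concretely an operator that behaves like $I$ plus an order $-1$ operator, chosen so that its commutator with $-iM_a\p_x^4$ produces a second-order elliptic term absorbing exactly the problematic $\Re$-part of the first-order nonlinear coefficient. Guided by Proposition~\ref{prop:linear}, the weight uses $g(Q^{\ep})=|Q^{\ep}|^2$ in the role of $\phi_A$ to kill the bad part of the coefficient of $\p_x^{m+1}$ coming from $F_j^2$ (and the analogous structure for the $\p_x^{m+2}$ term from $F_j^1$, which after one integration by parts in the $L^2$ pairing leaves a first-order bad term again controlled by $|Q^{\ep}|^2$). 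I would then compute $\frac{d}{dt}\sum_j\|V_j^{\ep}\|_{L^2}^2$: the parabolic term $\ep^5\p_x^4$ contributes $-2\ep^5\|\p_x^2V^{\ep}\|_{L^2}^2\leqslant 0$ (crucially with the correct sign, so it only helps), the skew-adjoint parts $-iM_a\p_x^4-iM_\lambda\p_x^2$ contribute zero in $\Re$, the term $-M_b\p_x^3$ is skew-adjoint and also contributes zero, the commutators of the gauge with the principal part generate the smoothing term $+c\|\p_x^{\text{something}}V^{\ep}\|$ with the right sign that dominates the remaining bad nonlinear term, and everything else is bounded by $C(\|Q^{\ep}\|_{H^4})\|Q^{\ep}\|_{H^m}^2$ (the point being that only $\|\cdot\|_{H^4}$, not $\|\cdot\|_{H^m}$, enters the \emph{coefficient}, which is what makes the existence time depend only on $\|Q_0\|_{H^4}$). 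Since the gauge weight is a bounded invertible operator with norm depending only on $\|Q^{\ep}\|_{H^3}$ or so, $\sum_j\|V_j^{\ep}\|_{L^2}^2$ is comparable to $\|Q^{\ep}\|_{H^m}^2$ plus lower order, so the differential inequality closes as $\frac{d}{dt}\|Q^{\ep}\|_{H^m}^2\leqslant C(\|Q^{\ep}\|_{H^4})\,\|Q^{\ep}\|_{H^m}^2+(\text{lower order})$.

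Finally I would run a continuity/bootstrap argument. The $H^4$-estimate closes by itself (take $m=4$ in the above): it gives $\frac{d}{dt}\|Q^{\ep}\|_{H^4}^2\leqslant C(\|Q^{\ep}\|_{H^4})\|Q^{\ep}\|_{H^4}^2$, hence by \eqref{eq:bs1} and the comparison ODE $y'=C(y)y^2$ with $y(0)\leqslant\|Q_0\|_{H^4}^2$, there is $T=T(\|Q_0\|_{H^4})>0$, independent of $\ep$, on which $\|Q^{\ep}(t)\|_{H^4}$ stays bounded by a constant depending only on $\|Q_0\|_{H^4}$. (A standard argument shows the local solution, which a priori exists only on $[0,T_\ep]$, can be continued as long as the $H^4$-norm --- in fact the $L^\infty\cap H^{4}$-information needed for the coefficients --- stays finite, so $T_\ep\geqslant T$.) Feeding this bound back into the $H^m$-inequality for general $m$, Gronwall's inequality gives $\|Q^{\ep}(t)\|_{H^m}\leqslant C(T,\|Q_0\|_{H^4})\|Q_0\|_{H^m}$ on $[0,T]$, again uniformly in $\ep$, which is the assertion. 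The main obstacle is the bookkeeping in the gauge computation: verifying that the commutator of the chosen gauge weight with $-iM_a\p_x^4$ really does generate a positive-definite second-order term of the exact size needed to dominate the first-order bad coefficient produced (after integration by parts) by $F_j^1$ and $F_j^2$, while simultaneously checking that all the error terms created by the gauge --- and by its interaction with the $\ep^5\p_x^4$ regularization --- carry coefficients depending only on $\|Q^{\ep}\|_{H^4}$ and are at most quadratic in $\|Q^{\ep}\|_{H^m}$; the nonlocal terms $F_j^3$ must be checked not to spoil this, which they do not because their top-order contribution has no derivative loss.
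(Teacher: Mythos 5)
Your proposal is correct and follows essentially the same route as the paper: the gauge transformation $V_j^{\ep}=\p_x^mQ_j^{\ep}+\frac{L}{4a_j}\Phi^{\ep}i\p_x^{m-1}Q_j^{\ep}$ whose commutator with the fourth-order principal part produces the elliptic term $-L\int_{\RR}g(Q^{\ep})|\p_xV^{\ep}|^2\,dx$ absorbing the derivative loss from $F_j^1,F_j^2$, the observation that $F_j^3$ causes no loss, the self-closing $H^4$ estimate via a bootstrap (the paper phrases it as the cutoff time $T_{\ep}^{\star}$ on which $\mathcal{E}_4\leqslant 2\mathcal{E}_4(Q_0^{\ep})$), and then Gronwall for general $m$ with coefficients depending only on $\|Q_0\|_{H^4}$. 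The only cosmetic difference is that the paper works with the modified energy $\mathcal{E}_m^2=\|V^{\ep}\|_{L^2}^2+\|Q^{\ep}\|_{H^{m-1}}^2$ rather than inverting the gauge, but this is exactly your "comparable to $\|Q^{\ep}\|_{H^m}^2$ plus lower order" remark.
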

\begin{proof}[Proof of Proposition~\ref{proposition:preloc}]
For the integer $m\geqslant 4$ and fixed $\ep\in (0,1)$, 
we consider the estimate for 
$\mathcal{E}_m(Q^{\ep})=\mathcal{E}_m(Q^{\ep}(t)):[0,T_{\ep}]\to [0,\infty)$, 
 the square of which is defined by 
\begin{align}
\mathcal{E}_m(Q^{\ep}(t))^2
&:=
\|V^{\ep}(t)\|_{L^2}^2+\|Q^{\ep}(t)\|_{H^{m-1}}^2.
\label{eq:Nml}
\end{align}
Here, $V^{\ep}={}^t(V^{\ep}_1,\ldots,V^{\ep}_n)$ 
is a $\mathbb{C}^n$-valued function defined by
\begin{align}
 &V^{\ep}_j=V^{\ep}_j(t,x)
 :=\p_x^mQ^{\ep}_j(t,x)+\dfrac{L}{4a_j}\Phi^{\ep}(t,x)i\p_x^{m-1}Q^{\ep}_j(t,x)
 \quad (j\in \left\{1,\ldots,n\right\}), 
 \label{eq:V_j}
\\
 &\Phi^{\ep}=\Phi^{\ep}(t,x)
 :=\int_{-\infty}^{x}g(Q^{\ep}(t,y))\,dy
 \left(
 =\int_{-\infty}^{x}|Q^{\ep}(t,y))|^2\,dy
 \right), 
 \label{eq:Phi}
 \end{align} 
 where $L>1$ is a sufficiently large constant which will be decided later 
 independently of $j$ and $\ep$. 
 Moreover, we set $\mathcal{E}_m(Q_0^{\ep})=\mathcal{E}_m(Q^{\ep}(0))$. 
\vspace{0.5em}
\\
\underline{Equivalence of $\mathcal{E}_m(Q^{\ep}(t))$ and $\|Q^{\ep}(t)\|_{H^m}$
on a restricted 
time-interval:}
\\ By definition \eqref{eq:Nml} with \eqref{eq:V_j}-\eqref{eq:Phi} 
and 
$$
|\Phi^{\ep}(t,x)|
\leqslant \int_{\RR}|Q^{\ep}(t,y))|^2\,dy
=\|Q^{\ep}(t)\|_{L^2}^2, 
$$ 
there exist constants 
$C_{1,L}, C_{2,L}>0$ which depend on $L$ and not on $\ep$ and $m$ 
such that 
\begin{align}
&\frac{\|Q^{\ep}(t)\|_{H^m}^2}{C_{1,L}
\left(1+\|Q^{\ep}(t)\|_{L^2}^4\right)}
\leqslant 
\mathcal{E}_m(Q^{\ep}(t))^2
\leqslant 
C_{2,L}
\left(1+
\|Q^{\ep}(t)\|_{L^2}^4
\right)\|Q^{\ep}(t)\|_{H^m}^2
\label{eq:H3Em}
\end{align}
for all $t\in [0,T_{\ep}]$. 
We set  
\begin{align}
T_{\ep}^{\star}
&=\sup
\left\{
T>0
\mid 
\mathcal{E}_4(Q^{\ep}(t))
\leqslant 
2\mathcal{E}_4(Q_0^{\ep})
\ \ 
\text{for all}
\ \  
t\in [0,T]
\right\}.
\label{eq:cuttime}
\end{align}
By \eqref{eq:Nml} and \eqref{eq:H3Em} for $m=4$ 
and \eqref{eq:cuttime}, 
\begin{align}
\|Q^{\ep}(t)\|_{H^4}^2
&\leqslant 
C_{1,L}(1+
\|Q^{\ep}(t)\|_{L^2}^4
)
\mathcal{E}_4(Q^{\ep}(t))^2
\quad 
(\because \eqref{eq:H3Em})
\nonumber
\\
&\leqslant 
C_{1,L}
\left\{
1+
\mathcal{E}_4(Q^{\ep}(t))^4
\right\}
\mathcal{E}_4(Q^{\ep}(t))^2
\quad 
(\because \eqref{eq:Nml})
\nonumber
\\
&\leqslant 
4C_{1,L}
\left\{
1+
16\,\mathcal{E}_4(Q_0^{\ep})^4
\right\}
\mathcal{E}_4(Q_0^{\ep})^2
\quad 
(\because \eqref{eq:cuttime}
\label{eq:H41}
\end{align}  
for $t\in [0,T_{\ep}^{\star}]$. 
In addition,  \eqref{eq:H3Em} for $t=0$ and \eqref{eq:bs1} for $m=4$ imply
\begin{align}
\mathcal{E}_4(Q_0^{\ep})^2
&\leqslant
C_{2,L}\left(1+\|Q_0^{\ep}\|_{H^4}^4\right) \|Q_0^{\ep}\|_{H^4}^2
\leqslant 
C_{2,L}\left\{
1+\|Q_0\|_{H^4}^4
\right\} 
\|Q_0\|_{H^4}^2.
\label{eq:H3F4}
\end{align}
Combining \eqref{eq:H41} and \eqref{eq:H3F4}, 
we have 
\begin{align}
\sup_{t\in [0,T_{\ep}^{\star}]}\|Q^{\ep}(t)\|_{H^4}^2
&\leqslant 
P^0_L(\|Q_0\|_{H^4})  
\label{eq:H3Emn}
\end{align}
and 
\begin{align}
&\frac{\|Q^{\ep}(t)\|_{H^m}^2}{P_{L}^1(\|Q_0\|_{H^4})}
\leqslant 
\mathcal{E}_m(Q^{\ep}(t))^2
\leqslant 
P_{L}^2(\|Q_0\|_{H^4})\|Q^{\ep}(t)\|_{H^m}^2
\label{eq:H3Emm}
\end{align}
for any $t\in [0,T_{\ep}^{\star}]$.
Here, each of $P_L^k(\cdot):[0,\infty)\to (0,\infty)$ for $k=0,1,2$ denotes a positive-valued 
increasing function on $[0,\infty)$ depending on $L$ but not on $m$ and $\ep$. 
\vspace{0.5em}
\\
\underline{The equation satisfied by $V_j^{\ep}$ and 
uniform estimate for $\left\{Q^{\ep}\right\}_{\ep\in (0,1)}$:}
\\
We set $U^{\ep}=\p_x^mQ^{\ep}$, i.e., 
$U^{\ep}={}^t(U^{\ep}_1,\ldots,U^{\ep}_n)$ and 
$U^{\ep}_j(t,x)=\p_x^mQ^{\ep}_j(t,x)$ for $j\in \left\{1,\ldots,n\right\}$. 
In this part, we 
investigate the equation satisfied by $V_j^{\ep}$ 
and then derive estimates for $\left\{Q^{\ep}(t)\right\}_{\ep\in (0,1)}$ in $H^m(\RR)$ 
on a restricted time-interval. 
Hereafter in this part, we use 
$A_m^k(\cdot): [0,\infty)\to (0,\infty)$ 
and $A_{L,m}^k(\cdot): [0,\infty)\to (0,\infty)$ for some integer $k$ 
to denote a positive-valued 
 increasing function which depends on $m$
 but not on $\ep$. 
We use the latter only if the increasing function depends also on $L$.  
\par
Applying $\p_x^m$ to the $j$-th component of \eqref{eq:bpde}, 
we have  
\begin{align}
 &\left\{
  \p_t+(\ep^5 -i\, a_j)\p_x^4-b_j\p_x^3-i \lambda_j \p_x^2
  \right\}
  U_j^{\ep}
  \nonumber
  \\
  &=
  \p_x^m(F_j^1(Q^{\ep},\p_x^2Q^{\ep}))
  +\p_x^m(F_j^2(Q^{\ep},\p_xQ^{\ep}))
  +\p_x^m(F_j^3(Q^{\ep},\p_xQ^{\ep})).
\label{eq:Ujm}
\end{align}
We compute the three terms of the right hand side of \eqref{eq:Ujm} separately. 
\par 
First, recalling 
$F_j^1(u,w)=O\left(
g(u)|w|
\right)$ follows from  \eqref{eq:F1} in (F1) and \eqref{eq:gz}, 
we use the Leibniz rule, 
the Gagliardo-Nirenberg inequality, 
and the Sobolev embedding  to deduce  
\begin{align}
& \p_x^m(F_j^1(Q^{\ep},\p_x^2Q^{\ep}))
\nonumber
\\  
&=
  O(g(Q^{\ep})|\p_x^2U^{\ep}|)
  +O(|\p_x\left\{g(Q^{\ep})\right\}| |\p_xU^{\ep}|)
  +r^1_{\leqslant m}
  \nonumber
  \\
 &=
 O(g(Q^{\ep})|\p_x^2U^{\ep}|)
 +O\left(|\p_xQ^{\ep}|
 |Q^{\ep}||\p_xU^{\ep}|\right)
 +r^1_{\leqslant m}
 \nonumber
 \\
 &=
 O(g(Q^{\ep})|\p_x^2U^{\ep}|)
  +O\left(\|Q^{\ep}(t)\|_{H^2}
 |Q^{\ep}||\p_xU^{\ep}|\right)
  +r^1_{\leqslant m}, 
  \label{eq:F1m}
 \end{align}
 where 
 \begin{align}
& \| r^1_{\leqslant m}(t)\|_{L^2}
\leqslant 
A_{m}^1(\|Q^{\ep}(t)\|_{H^3})\|Q^{\ep}(t)\|_{H^m}. 
\label{eq:re1}
 \end{align}
More precisely, the first term of the right hand side of \eqref{eq:F1m} satisfies 
 \begin{align}
 |O(g(Q^{\ep})|\p_x^2U^{\ep}|)|
 \leqslant \sum_{j=1}^{n}c_j^1\,g(Q^{\ep})|\p_x^2U^{\ep}|, 
 \label{eq:F1ms}
 \end{align}
 where the constants $c_{j}^1$ for $j\in \left\{1,\ldots,n\right\}$ 
 come from (F1) and are 
 independent of $L$. 
 \par
Second, since 
$$
F_j^2(Q^{\ep},\p_xQ^{\ep})
=
O\left(
\sum_{p_1=0}^{d_1}
\sum_{p_2=0}^{d_2}
|Q^{\ep}|^{1+p_1}|\p_xQ^{\ep}|^{p_2}
\right)
$$
follows from \eqref{eq:F2} in (F2),     
a similar computation using the Leibniz rule, the Sobolev embedding, 
and the  Gagliardo-Nirenberg inequality shows the following:  
If $d_2=0$, then 
\begin{align}
\|
\p_x^m\left(
F_j^2(Q^{\ep},\p_xQ^{\ep})
\right)
(t)
\|_{L^2}
&\leqslant 
 A_{m}^2(\|Q^{\ep}(t)\|_{H^1})\|Q^{\ep}(t)\|_{H^m}. 
\label{eq:F2m2}
  \end{align}
If $d_2\geqslant 1$, then 
\begin{align}
&\p_x^m\left(
F_j^2(Q^{\ep},\p_xQ^{\ep})
\right)
\nonumber
\\&=
O\left(
\sum_{p_1=0}^{d_1}
\sum_{p_2=1}^{d_2}
|Q^{\ep}|^{1+p_1}|\p_xQ^{\ep}|^{p_2-1}|\p_x^{m+1}Q^{\ep}|
\right)
+
r^{2}_{\leqslant m}
\nonumber
\\
&=
O\left(
\sum_{p_1=0}^{d_1}
\sum_{p_2=1}^{d_2}
\|Q^{\ep}(t)\|_{H^2}^{p_1+p_2-1}
|Q^{\ep}||\p_x^{m+1}Q^{\ep}|
\right)
+
r^2_{\leqslant m}
\label{eq:F2m1}
\end{align}
where 
\begin{align}
 & \| r^2_{\leqslant m}(t)\|_{L^2}
 \leqslant 
 A_{m}^3(\|Q^{\ep}(t)\|_{H^3})\|Q^{\ep}(t)\|_{H^m}.
\label{eq:re2}
  \end{align}
\par 
Third, it follows that 
\begin{align}
\|
\p_x^m\left(
F_j^3(Q^{\ep},\p_xQ^{\ep})
\right)
(t)
\|_{L^2}
&\leqslant 
 A_{m}^4(\|Q^{\ep}(t)\|_{H^2})\|Q^{\ep}(t)\|_{H^m}.
  \label{eq:F3m}
 \end{align} 
Although it may not be difficult to see the fact, 
we confirm it here to share how to handle 
nonlocal terms.  
For this purpose, 
 we begin with using the Leibniz rule to see
   \begin{align}
  &\p_x^m(F_j^3(Q^{\ep},\p_xQ^{\ep})) 
  \nonumber
  \\
  &=
  \sum_{r=1}^n
  \left(
  \int_{-\infty}^x
  F_{j,r}^{3,A}(Q^{\ep},\p_xQ^{\ep})(t,y)dy
  \right)
  \p_x^{m}(F_{j,r}^{3,B}(Q^{\ep})) 
  \nonumber
  \\
  &\quad
  +
  \sum_{r=1}^n
   \sum_{k=1}^m
   \frac{m!}{k!(m-k)!}
  \left(
  \p_x^k
  \int_{-\infty}^x
  F_{j,r}^{3,A}(Q^{\ep},\p_xQ^{\ep})(t,y)dy
  \right)
  \p_x^{m-k}(F_{j,r}^{3,B}(Q^{\ep})) 
  \nonumber
  \\
  &=
  \sum_{r=1}^n
  \left(
  \int_{-\infty}^x
  F_{j,r}^{3,A}(Q^{\ep},\p_xQ^{\ep})(t,y)dy
  \right)
  \p_x^{m}(F_{j,r}^{3,B}(Q^{\ep})) 
  \nonumber
  \\
  &\quad
  +
  \sum_{r=1}^n
   \sum_{k=1}^m
   \frac{m!}{k!(m-k)!}
  \p_x^{k-1}
  (F_{j,r}^{3,A}(Q^{\ep},\p_xQ^{\ep}))\, 
  \p_x^{m-k}(F_{j,r}^{3,B}(Q^{\ep})). 
  \nonumber
   \end{align}
  Here, by the H\"older inequality and the Sobolev embedding,   
   \begin{align}
    &\left\|
    \left(
    \int_{-\infty}^{\cdot}
    F_{j,r}^{3,A}(Q^{\ep},\p_xQ^{\ep})(t,y)dy
    \right)
    \p_x^{m}(F_{j,r}^{3,B}(Q^{\ep}))(t)
    \right\|_{L^2}
    \nonumber
    \\
    &\leqslant 
    \left\|
     \int_{-\infty}^{\cdot}
     F_{j,r}^{3,A}(Q^{\ep},\p_xQ^{\ep})(t,y)dy
     \right\|_{L^{\infty}}
     \left\|
      \p_x^{m}(F_{j,r}^{3,B}(Q^{\ep}))(t)
      \right\|_{L^2}
   \nonumber
   \\
   &\leqslant 
   \left\|
     F_{j,r}^{3,A}(Q^{\ep},\p_xQ^{\ep})(t)
     \right\|_{L^{1}}
     \left\|
      \p_x^{m}(F_{j,r}^{3,B}(Q^{\ep}))(t)
      \right\|_{L^2}
   \nonumber
    \end{align}
    for $j,r\in \left\{1,\ldots,n\right\}$, 
    and  
    \begin{align}
    &\left\|
    \left(
    \p_x^{k-1}
    (F_{j,r}^{3,A}(Q^{\ep},\p_xQ^{\ep}))\, 
    \p_x^{m-k}(F_{j,r}^{3,B}(Q^{\ep}))
    \right)(t)
    \right\|_{L^2}
    \nonumber
    \\
    &\leqslant 
    \left\|
     \p_x^{k-1}
     (F_{j,r}^{3,A}(Q^{\ep},\p_xQ^{\ep}))(t)
     \right\|_{L^2}
     \left\|
      \p_x^{m-k}(F_{j,r}^{3,B}(Q^{\ep}))(t)
      \right\|_{L^{\infty}}
   \nonumber
   \\
   &\leqslant 
   \left\|
     \p_x^{k-1}
     (F_{j,r}^{3,A}(Q^{\ep},\p_xQ^{\ep}))(t)
     \right\|_{L^2}
     \left\|
      \p_x^{m-k}(F_{j,r}^{3,B}(Q^{\ep}))(t)
      \right\|_{H^1}
   \nonumber 
    \end{align}
   for  $j,r\in \left\{1,\ldots,n\right\}$ and 
   $k\in \left\{1,\ldots,m\right\}$.  
   Furthermore, recalling 
   \begin{align}
   F_{j,r}^{3,A}(Q^{\ep},\p_xQ^{\ep})
   &=
   O\left(
  \sum_{p_3=0}^{d_3} |Q^{\ep}|^{2+p_3}
  +\sum_{p_4=0}^{d_4}|\p_xQ^{\ep}|^{2+p_4}
   \right), 
   \nonumber
   \\
   F_{j,r}^{3,B}(Q^{\ep})
   &=
   O\left(
   \sum_{p_5=0}^{d_5} |Q^{\ep}|^{1+p_5}
   \right), 
   \nonumber
   \end{align}
   which follow from
  \eqref{eq:F31} and \eqref{eq:F32} in  (F3),    
  we estimate in the same way as above to deduce 
   \begin{align}
   &\left\|
      F_{j,r}^{3,A}(Q^{\ep},\p_xQ^{\ep})(t)
      \right\|_{L^{1}}
   \nonumber
   \\
   &\leqslant
   C\left(
   \sum_{p_3=0}^{d_3}
   \|Q^{\ep}(t)\|_{L^{\infty}}^{p_3}
   +
   \sum_{p_4=0}^{d_4}
   \|\p_xQ^{\ep}(t)\|_{L^{\infty}}^{p_4}
   \right)
   \left(
   \|Q^{\ep}(t)\|_{L^{2}}^{2}
    +
    \|\p_xQ^{\ep}(t)\|_{L^{2}}^{2}
   \right)
   \nonumber
   \\
   &\leqslant 
   C\sum_{\ell=2}^{d_3+d_4+2}\|Q^{\ep}(t)\|_{H^2}^{\ell}, 
   \label{eq:4141}
  \\
  & \left\|
       \p_x^{m}(F_{j,r}^{3,B}(Q^{\ep}))(t)
       \right\|_{L^2}
  \leqslant 
   A_m^5(\|Q^{\ep}(t)\|_{H^1})
   \|Q^{\ep}(t)\|_{H^m}, 
  \label{eq:4142}
  \\
  &\left\|
      \p_x^{k-1}
      (F_{j,r}^{3,A}(Q^{\ep},\p_xQ^{\ep}))(t)
      \right\|_{L^2}
  \leqslant  
  A_m^6(\|Q^{\ep}(t)\|_{H^2})
   \|Q^{\ep}(t)\|_{H^k},
  \label{eq:4143}
  \\
  &\left\|
      \p_x^{m-k}(F_{j,r}^{3,B}(Q^{\ep}))(t)
      \right\|_{H^1}
  \leqslant  
  A_m^7(\|Q^{\ep}(t)\|_{H^1})
   \|Q^{\ep}(t)\|_{H^{m-k+1}}.
  \label{eq:4144}
   \end{align}
  It follows from \eqref{eq:4141} and \eqref{eq:4142}  
   \begin{align}
   &\left\|
   \left(
   \int_{-\infty}^{\cdot}
   F_{j,r}^{3,A}(Q^{\ep},\p_xQ^{\ep})(t,y)dy
   \right)
   \p_x^{m}(F_{j,r}^{3,B}(Q^{\ep}))(t)
   \right\|_{L^2}
   \nonumber
   \\
   &\leqslant 
      A_m^8(\|Q^{\ep}(t)\|_{H^2})
       \left\|
        Q^{\ep}(t)
        \right\|_{H^m}.
        \label{eq:233291}
   \end{align}
  It follows from \eqref{eq:4143} and \eqref{eq:4144} 
   \begin{align}
   &\left\|
   \left(
   \p_x^{k-1}
   (F_{j,r}^{3,A}(Q^{\ep},\p_xQ^{\ep}))\, 
   \p_x^{m-k}(F_{j,r}^{3,B}(Q^{\ep}))
   \right)(t)
   \right\|_{L^2}
   \nonumber
   \\
   &\leqslant 
      A_m^9(\|Q^{\ep}(t)\|_{H^2})
          \|Q^{\ep}(t)\|_{H^k} 
          \|Q^{\ep}(t)\|_{H^{m-k+1}}.
  \nonumber
   \end{align} 
  Here, for each $k\in \left\{1,\ldots,m\right\}$,  the Gagliardo-Nirenberg inequality implies 
   \begin{align}
  &\|Q^{\ep}(t)\|_{H^k} 
   \|Q^{\ep}(t)\|_{H^{m-k+1}}
  \nonumber
  \\
  &\leqslant C
   \|Q^{\ep}(t)\|_{H^1}^{\frac{m-k}{m-1}} 
    \|Q^{\ep}(t)\|_{H^{m}}^{\frac{k-1}{m-1}} 
    \|Q^{\ep}(t)\|_{H^1}^{\frac{k-1}{m-1}} 
     \|Q^{\ep}(t)\|_{H^{m}}^{\frac{m-k}{m-1}}
  \nonumber
  \\
  &=C
   \|Q^{\ep}(t)\|_{H^1} 
    \|Q^{\ep}(t)\|_{H^{m}}.   
   \nonumber
  \end{align}
  Hence, we obtain 
  \begin{align}
   &\left\|
   \left(
   \p_x^{k-1}
   (F_{j,r}^{3,A}(Q^{\ep},\p_xQ^{\ep}))\, 
   \p_x^{m-k}(F_{j,r}^{3,B}(Q^{\ep}))
   \right)(t)
   \right\|_{L^2}
   \nonumber
   \\
   &
  \leqslant 
   A_m^{10}(\|Q^{\ep}(t)\|_{H^2})
   \|Q^{\ep}(t)\|_{H^m}.
   \label{eq:233292}
   \end{align}
   The desired estimate \eqref{eq:F3m} immediately follows from 
  \eqref{eq:233291} and \eqref{eq:233292}.
 \par 
 Combining 
 \eqref{eq:Ujm}, \eqref{eq:F1m}(with \eqref{eq:re1}), 
 \eqref{eq:F2m2} or \eqref{eq:F2m1} (with \eqref{eq:re2}) , 
 and \eqref{eq:F3m}, we have
 \begin{align}
  &\left\{
   \p_t+(\ep^5 -i\, a_j)\p_x^4-b_j\p_x^3-i\lambda_j \p_x^2
   \right\}
   U_j^{\ep}
   \nonumber
   \\
   &=
 O(g(Q^{\ep})|\p_x^2U^{\ep}|)
   +O(A_m^{11}(\|Q^{\ep}(t)\|_{H^2})|Q^{\ep}||\p_xU^{\ep}|)
   +r^3_{\leqslant m},  
 \label{eq:414r1}
 \end{align}
 where 
 \begin{align}
  & \| r^3_{\leqslant m}(t)\|_{L^2}
  \leqslant 
  A_{m}^{12}(\|Q^{\ep}(t)\|_{H^3})\|Q^{\ep}(t)\|_{H^m}.
 \label{eq:414r2}
   \end{align}
 \par
 We next compute 
 \begin{align}
 &
 \p_t\left(
 \dfrac{L}{4a_j}\Phi^{\ep}i\p_x^{m-1}Q^{\ep}_j
 \right)
 =
 \dfrac{L}{4a_j}\Phi^{\ep}i\p_t\p_x^{m-1}Q^{\ep}_j
 +
 \dfrac{L}{4a_j}(\p_t\Phi^{\ep})i\p_x^{m-1}Q^{\ep}_j.
 \label{eq:27}
 \end{align}
 By the almost same computation to obtain \eqref{eq:Ujm} 
 with \eqref{eq:F1m}-\eqref{eq:F3m}, 
 we have 
 \begin{align}
 \p_t\p_x^{m-1}Q_j^{\ep}
 &=
\left\{
 (-\ep^5 +ia_j)\p_x^4+b_j\p_x^3+i\lambda_j\p_x^2
 \right\}
 \p_x^{m-1}Q_j^{\ep}
 \nonumber
 \\
 &\quad 
 +O(g(Q^{\ep})|\p_xU^{\ep}|)
 +r^4_{\leqslant m}
 \nonumber
 \\
 &=
 \left\{
  (-\ep^5 +ia_j)\p_x^4+b_j\p_x^3+i\lambda_j\p_x^2
  \right\}
  \p_x^{m-1}Q_j^{\ep}
 \nonumber
 \\
 &\quad
  +O(\|Q^{\ep}(t)\|_{H^1}|Q^{\ep}||\p_xU^{\ep}|)
  +r^4_{\leqslant m},
 \label{eq:271}
\end{align}
where 
\begin{align}
 \| r^4_{\leqslant m}(t)\|_{L^2}
 &\leqslant 
 A_{m}^{13}(\|Q^{\ep}(t)\|_{H^2})\|Q^{\ep}(t)\|_{H^m}.
 \nonumber
  \end{align}
  Recalling $|\Phi^{\ep}(t,x)|\leqslant \|Q^{\ep}(t)\|_{L^2}^2$, 
  and substituting \eqref{eq:271} into the 
  first term of the right hand side of \eqref{eq:27}, we see
 \begin{align}
 &\dfrac{L}{4a_j}\Phi^{\ep}i\p_t\p_x^{m-1}Q^{\ep}_j
 \nonumber
 \\
 &=
 \left\{(-\ep^5+ia_j)\p_x^4+b_j\p_x^3+i\lambda_j\p_x^2\right\}
 \left(
 \dfrac{L}{4a_j}\Phi^{\ep}i\p_x^{m-1}Q^{\ep}_j
 \right)
 \nonumber
 \\
 &\quad
 -(-\ep^5+ia_j)\sum_{k=1}^4\frac{4!}{k!(4-k)!}(\p_x^k\Phi^{\ep})
 \dfrac{L}{4a_j}i\p_x^{4-k+m-1}Q^{\ep}_j
 \nonumber
  \\
  &\quad
  -b_j\sum_{k=1}^3\frac{3!}{k!(3-k)!}(\p_x^k\Phi^{\ep})
   \dfrac{L}{4a_j}i\p_x^{3-k+m-1}Q^{\ep}_j
 \nonumber
 \\
 &\quad
 -i\lambda_j\sum_{k=1}^2\frac{2!}{k!(2-k)!}(\p_x^k\Phi^{\ep})
  \dfrac{L}{4a_j}i\p_x^{2-k+m-1}Q^{\ep}_j
  \nonumber
  \\
  &\quad
  +\dfrac{L}{4a_j}O\left(A_m^{14}(\|Q^{\ep}(t)\|_{H^1}) 
  |Q^{\ep}||\p_xU^{\ep}|\right)
  +r_{\leqslant m}^5, 
  \nonumber
  \end{align}
where 
\begin{align}
  \| r^5_{\leqslant m}(t)\|_{L^2}
   &\leqslant 
   A_{L,m}^1(\|Q^{\ep}(t)\|_{H^2})\|Q^{\ep}(t)\|_{H^m}. 
   \nonumber
 \end{align}
   Moreover, since $\p_x\Phi^{\ep}=g(Q^{\ep})=|Q^{\ep}|^2$, it follows that
 \begin{align}
 &\sum_{k=1}^4\frac{4!}{k!(4-k)!}(\p_x^k\Phi^{\ep})
  \dfrac{L}{4a_j}i\p_x^{4-k+m-1}Q^{\ep}_j
  \nonumber
  \\
  &=
  \frac{Li}{a_j}g(Q^{\ep})\p_x^{m+2}Q_j^{\ep}
  +
  \frac{4!}{2!2!}\frac{Li}{4a_j}\p_x\left\{g(Q^{\ep})\right\}\p_x^{m+1}Q_j^{\ep}
  +\cdots
  \nonumber
  \\
  &=
  \frac{Li}{a_j}\p_x\left\{g(Q^{\ep})\p_xU_j^{\ep}\right\}
  +\frac{Li}{2 a_j}\p_x\left\{g(Q^{\ep})\right\}\p_xU_j^{\ep}
  +r_{\leqslant m}^6
   \nonumber
   \\
   &=
  \frac{Li}{a_j}\p_x\left\{g(Q^{\ep})\p_xU_j^{\ep}\right\}
   +\frac{Li}{a_j}O\left(A_m^{15}(\|Q^{\ep}(t)\|_{H^2})|Q^{\ep}||\p_xU^{\ep}|\right)
   +r_{\leqslant m}^6,  
 \nonumber
 \end{align}
 where 
 \begin{align}
   &\| r^6_{\leqslant m}(t)\|_{L^2}
    \leqslant 
    A_{L,m}^2(\|Q^{\ep}(t)\|_{H^3})\|Q^{\ep}(t)\|_{H^m}.
    \nonumber
  \end{align}
  In the same way as above, 
  \begin{align}
   &\sum_{k=1}^3\frac{3!}{k!(3-k)!}(\p_x^k\Phi^{\ep})
    \dfrac{L}{4a_j}i\p_x^{3-k+m-1}Q^{\ep}_j
    \nonumber
    \\
    &=
\frac{Li}{a_j}O\left(\|Q^{\ep}(t)\|_{H^1}|Q^{\ep}||\p_xU^{\ep}|\right)
     +r_{\leqslant m}^7,  
   \nonumber
   \end{align}
   where 
   \begin{align}
     &\| r^7_{\leqslant m}(t)\|_{L^2}
      \leqslant 
      A_{L,m}^3(\|Q^{\ep}(t)\|_{H^3})\|Q^{\ep}(t)\|_{H^m}.
      \nonumber
    \end{align}
Noting them, we have
\begin{align}
&\dfrac{L}{4a_j}\Phi^{\ep}i\p_t\p_x^{m-1}Q^{\ep}_j
\nonumber
\\
&=
 \left\{(-\ep^5+ia_j)\p_x^4+b_j\p_x^3+i\lambda_j\p_x^2\right\}
 \left(
 \dfrac{L}{4a_j}\Phi^{\ep}i\p_x^{m-1}Q^{\ep}_j
 \right)
 \nonumber
 \\
 &\quad 
+\left(
\frac{\ep^5 i}{a_j}+1
\right)L\p_x\left\{g(Q^{\ep})\p_xU_j^{\ep}\right\}  
+
O(A_{L,m}^4(\|Q^{\ep}(t)\|_{H^2})|Q^{\ep}||\p_xU^{\ep}|)
\nonumber
\\
&\quad +r_{\leqslant m}^8, 
\label{eq:91}
\end{align}
where 
\begin{align}
\| r^8_{\leqslant m}(t)\|_{L^2}
    \leqslant 
    A_{L,m}^5(\|Q^{\ep}(t)\|_{H^3})\|Q^{\ep}(t)\|_{H^m}.
    \nonumber
\end{align}
\\
On the other hand, noting $Q^{\ep}\in C([0,T_{\ep}]; H^{\infty}(\RR;\mathbb{C}^n))$ 
and $\ep\in (0,1)$, 
we use \eqref{eq:bpde} to deduce 
\begin{align}
\left|
\dfrac{L}{4a_j}(\p_t\Phi^{\ep})i\p_x^{m-1}Q^{\ep}_j
\right|
&=
\left|
\dfrac{L}{2a_j}
\Re\left[\int_{-\infty}^{x}
\p_tQ^{\ep}\cdot Q^{\ep}
\,dy
\right]
i\p_x^{m-1}Q^{\ep}_j
\right|
\nonumber
\\
&\leqslant 
\dfrac{L}{2|a_j|}
\|\p_tQ^{\ep}(t)\|_{L^2}
\|Q^{\ep}(t)\|_{L^2}
\,|\p_x^{m-1}Q^{\ep}_j|
\nonumber
\\
&\leqslant 
A_{L,m}^6(\|Q^{\ep}(t)\|_{H^4})
|\p_x^{m-1}Q^{\ep}_j|.
\nonumber
\end{align}
This shows
\begin{align}
\left\|
\left(
\dfrac{L}{4a_j}(\p_t\Phi^{\ep})i\p_x^{m-1}Q^{\ep}_j
\right)(t)
\right\|_{L^2}
&\leqslant 
A_{L,m}^6(\|Q^{\ep}(t)\|_{H^4})
\|Q^{\ep}(t)\|_{H^m}. 
\label{eq:101}
\end{align}
Combining \eqref{eq:91} and \eqref{eq:101}, we obtain 
\begin{align}
 &
 \p_t\left(
 \dfrac{L}{4a_j}\Phi^{\ep}i\p_x^{m-1}Q^{\ep}_j
 \right)
\nonumber
\\
&=
 \left\{(-\ep^5+ia_j)\p_x^4+b_j\p_x^3+i\lambda_j\p_x^2\right\}
 \left(
 \dfrac{L}{4a_j}\Phi^{\ep}i\p_x^{m-1}Q^{\ep}_j
 \right)
 \nonumber
 \\&\quad 
+\left(
\frac{\ep^5 i}{a_j}+1
\right)L\p_x\left\{g(Q^{\ep})\p_xU_j^{\ep}\right\}  
+O(A_{L,m}^4(\|Q^{\ep}(t)\|_{H^2})|Q^{\ep}||\p_xU^{\ep}|)
\nonumber
\\
&\quad +r_{\leqslant m}^9, 
\label{eq:102}
\end{align}
where 
\begin{align}
  &\| r^9_{\leqslant m}(t)\|_{L^2}
   \leqslant 
   A_{L,m}^7(\|Q^{\ep}(t)\|_{H^4})\|Q^{\ep}(t)\|_{H^m}.
   \label{eq:103}
\end{align}
\par 
Consequently, combining 
\eqref{eq:414r1} (with \eqref{eq:414r2})
and  \eqref{eq:102} (with \eqref{eq:103}), 
and then using \eqref{eq:V_j},  
we deduce 
\begin{align}
\p_tV_j^{\ep}
&=
  \left\{(-\ep^5+ia_j)\p_x^4+b_j\p_x^3+i\lambda_j\p_x^2\right\}V_j^{\ep}
  \nonumber
  \\
  &\quad 
  +\left(
  \frac{\ep^5 i}{a_j}+1
  \right)L\p_x\left\{g(Q^{\ep})\p_xU_j^{\ep}\right\}
 +O(g(Q^{\ep})|\p_x^2U^{\ep}|)
   \nonumber
   \\
   &\quad
   +O(A_{L,m}^8(\|Q^{\ep}(t)\|_{H^2})|Q^{\ep}||\p_xU^{\ep}|)
  +r^{3}_{\leqslant m}+r^{9}_{\leqslant m}
  \nonumber
  \\
 &=
  \left\{(-\ep^5+ia_j)\p_x^4+b_j\p_x^3+i\lambda_j\p_x^2\right\}V_j^{\ep}
    \nonumber
    \\
    &\quad
  +\left(
  \frac{\ep^5 i}{a_j}+1
  \right)L\p_x\left\{g(Q^{\ep})\p_xV_j^{\ep}\right\}
 +O(g(Q^{\ep})|\p_x^2V^{\ep}|)
   \nonumber
   \\
   &\quad
   +O(A_{L,m}^9(\|Q^{\ep}(t)\|_{H^2})|Q^{\ep}||\p_xV^{\ep}|)
  +r^{10}_{\leqslant m}, 
\label{eq:111}
\end{align}
where   
\begin{align}
  &\| r^{10}_{\leqslant m}(t)\|_{L^2}
   \leqslant 
   A_{L,m}^{10}(\|Q^{\ep}(t)\|_{H^4})\|Q^{\ep}(t)\|_{H^m}.  
   \label{eq:112}
\end{align}
\par 
In what follows, we estimate  $\mathcal{E}_m(Q^{\ep}(t))^2$ for  
$t\in [0,T_{\ep}^{\star}]$ where $T_{\ep}^{\star}$ is introduced by \eqref{eq:cuttime}. 
Using \eqref{eq:111} 
and the integration by parts, 
we deduce  
\begin{align}
&\dfrac{1}{2}\dfrac{d}{dt}
\|V^{\ep}(t)\|_{L^2}^2
=
\sum_{j=1}^n
\Re\int_{\RR}
\p_tV_j^{\ep}\overline{V_j^{\ep}}\,dx
\nonumber
\\
&\leqslant 
-\ep^5\int_{\RR}
|\p_x^2V^{\ep}|^2\,dx
-L\int_{\RR}
g(Q^{\ep})|\p_xV^{\ep}|^2\,dx
\nonumber
\\
&\quad 
+
\sum_{j=1}^n
\Re\int_{\RR}
O(g(Q^{\ep})|\p_x^2V^{\ep}|)
\overline{V_j^{\ep}}\,dx
\nonumber
\\
&\quad
+
C\,
A_{L,m}^9(\|Q^{\ep}(t)\|_{H^2})
\int_{\RR}
|Q^{\ep}||\p_xV^{\ep}|
|V^{\ep}|\,dx
\nonumber
\\
&\quad 
+\|r^{10}_{\leqslant m}(t)\|_{L^2}
\|V^{\ep}(t)\|_{L^2}. 
\label{eq:H121}
\end{align}
The second term of the right hand side of \eqref{eq:H121} 
comes from the first term of the right hand side of \eqref{eq:F1m} with \eqref{eq:F1ms}.   
Therefore, by integration by parts, we see that there exists positive 
constants $C_{1}^{\star}$ and $C_{2}^{\star}$ which are independent of $L$ such that 
\begin{align}
&\sum_{j=1}^n
\Re\int_{\RR}
O(g(Q^{\ep})|\p_x^2V^{\ep}|)
\overline{V_j^{\ep}}\,dx
\nonumber
\\
&=
\sum_{j,\ell=1}^n
\Re\int_{\RR}
O(g(Q^{\ep}))
\left(\p_x^2V_{\ell}^{\ep}+\overline{\p_x^2V_{\ell}^{\ep}}\right)
\overline{V_j^{\ep}}\,dx
\nonumber
\\
&=
-\sum_{j,\ell=1}^n
\Re\int_{\RR}
O(g(Q^{\ep}))
\left(\p_xV_{\ell}^{\ep}+\overline{\p_xV_{\ell}^{\ep}}\right)
\overline{\p_xV_j^{\ep}}\,dx
\nonumber
\\
&\quad 
-\sum_{j,\ell=1}^n
\Re\int_{\RR}
\p_x\left(O(g(Q^{\ep}))\right)
\left(\p_xV_{\ell}^{\ep}+\overline{\p_xV_{\ell}^{\ep}}\right)
\overline{V_j^{\ep}}\,dx
\nonumber
\\
&\leqslant 
C_1^{\star}
\int_{\RR}
g(Q^{\ep})|\p_xV^{\ep}|^2\,dx
+C_2^{\star}
\|Q^{\ep}(t)\|_{H^2}
\int_{\RR}
|Q^{\ep}||\p_xV^{\ep}|
|V^{\ep}|\,dx.
\nonumber
\end{align}
Furthermore, the Young inequality for products  
and \eqref{eq:H3Emn} 
shows
\begin{align}
&
\|Q^{\ep}(t)\|_{H^2}
\int_{\RR}
|Q^{\ep}||\p_xV^{\ep}|
|V^{\ep}|\,dx
\nonumber
\\
&\leqslant 
\frac{1}{2}\int_{\RR}|Q^{\ep}|^2|\p_xV^{\ep}|^2\,dx
+
\frac{1}{2}\|Q^{\ep}(t)\|_{H^2}^2\int_{\RR}|V^{\ep}|^2\,dx
\nonumber
\\
&\leqslant 
\frac{1}{2}\int_{\RR}g(Q^{\ep})|\p_xV^{\ep}|^2\,dx
+
\frac{1}{2}P_{L}^{0}(\|Q_0\|_{H^4})\mathcal{E}_m(Q^{\ep}(t))^2. 
\nonumber
\end{align}
Thus, we have 
\begin{align}
&\sum_{j=1}^n
\Re\int_{\RR}
O(g(Q^{\ep})|\p_x^2V^{\ep}|)
\overline{V_j^{\ep}}\,dx
\nonumber
\\
&\leqslant 
\left(C_1^{\star}+\frac{C_2^{\star}}{2}\right)\int_{\RR}g(Q^{\ep})|\p_xV^{\ep}|^2\,dx
+
\frac{C_2^{\star}}{2}
P_{L}^{0}(\|Q_0\|_{H^4})
\mathcal{E}_m(Q^{\ep}(t))^2. 
\label{eq:H122}
\end{align}
In the same way, we use the Young inequality and \eqref{eq:H3Emn} 
to deduce 
\begin{align}
&A_{L,m}^9(\|Q^{\ep }(t)\|_{H^2})
\int_{\RR}
|Q^{\ep}||\p_xV^{\ep}|
|V^{\ep}|\,dx
\nonumber
\\
&\leqslant 
\frac{1}{2}\int_{\RR}g(Q^{\ep})|\p_xV^{\ep}|^2\,dx
+
A^{11}_{L,m}(\|Q_0\|_{H^4})\mathcal{E}_m(Q^{\ep}(t))^2. 
\label{eq:H123}
\end{align}
In addition, in view of \eqref{eq:112}, \eqref{eq:H3Emn}, and \eqref{eq:H3Emm}, 
we obtain
\begin{align}
\|r^{10}_{\leqslant m}(t)\|_{L^2}
\|V^{\ep}(t)\|_{L^2}
&\leqslant 
A^{12}_{L,m}(\|Q_0\|_{H^4})\mathcal{E}_m(Q^{\ep}(t))^2. 
\label{eq:H124}
\end{align}
Therefore, combining 
\eqref{eq:H121}-\eqref{eq:H124}, 
we get 
\begin{align}
&\dfrac{1}{2}\dfrac{d}{dt}
\|V^{\ep}(t)\|_{L^2}^2
+\ep^5\int_{\RR}
|\p_x^2V^{\ep}|^2\,dx
\nonumber
\\
&\leqslant 
\left(
-L
+C_1^{\star}+\frac{C_2^{\star}}{2}
+\frac{1}{2}
\right)\int_{\RR}
g(Q^{\ep})|\p_xV^{\ep}|^2\,dx
\nonumber
\\&\quad 
+
A^{13}_{L,m}(\|Q_0\|_{H^4})\mathcal{E}_m(Q^{\ep}(t))^2. 
\label{eq:125}
\end{align}
\par 
On the other hand, permitting loss of one derivative, 
we can easily obtain 
\begin{align}
 &\dfrac{1}{2}\dfrac{d}{dt}
 \|Q^{\ep}(t)\|_{H^{m-1}}^2
 +\ep^5\sum_{k=0}^{m-1}
 \int_{\RR}
  |\p_x^{k+2}Q^{\ep}|^2\,dx
 \nonumber
 \\&\leqslant 
 A^{14}_{L,m}(\|Q_0\|_{H^4})\mathcal{E}_m(Q^{\ep}(t))^2.
 \label{eq:H132}
 \end{align}
\par 
Noting again that $C_1^{\star}$ and $C_1^{\star}$ are independent of $L$, 
we can take $L=L_0$ to satisfy 
$-L_0+C_1^{\star}+(C_2^{\star}/2)+(1/2)<0$. 
By fixing $L=L_0$ and combining \eqref{eq:125} and \eqref{eq:H132}, 
we obtain 
\begin{align}
& \dfrac{d}{dt}
 \mathcal{E}_m(Q^{\ep}(t))^2
 +2\ep^5\left(
 \|\p_x^2V^{\ep}(t)\|_{L^2}^2
 +
 \|\p_x^2Q^{\ep}(t)\|_{H^{m-1}}^2
 \right)
\nonumber
\\
&
\leqslant 
 A^{15}_{L_0,m}(\|Q_0\|_{H^4})\mathcal{E}_m(Q^{\ep}(t))^2. 
 \label{eq:H133}
 \end{align}
Therefore, the Gronwall inequality shows  
\begin{align}
\mathcal{E}_m(Q^{\ep}(t))^2
&\leqslant 
\mathcal{E}_m(Q_0^{\ep})^2
\exp(A^{15}_{L_0,m}(\|Q_0\|_{H^4})t)
\quad 
\text{for}
\quad
t\in [0,T_{\ep}^{\star}].
\label{eq:H151}
\end{align}
This inequality \eqref{eq:H151} for $m=4$ and the definition of $T_{\ep}^{\star}$
implies 
$$4\leqslant \exp(A^{15}_{L_0,4}(\|Q_0\|_{H^4})T_{\ep}^{\star}).$$ 
From this, we obtain 
\begin{equation}
T_{\ep}^{\star}\geqslant 
\frac{\log 4}{A^{15}_{L_0,4}(\|Q_0\|_{H^4})}
=:T>0, 
\label{eq:extime}
\end{equation}
and it follows that  
\begin{align}
\sup_{t\in [0,T]}
\mathcal{E}_m(Q^{\ep}(t))^2
&\leqslant 
\mathcal{E}_m(Q_0^{\ep})^2
\exp(A^{15}_{L_0,m}(\|Q_0\|_{H^4})T)
\nonumber. 
\nonumber
\end{align} 
Furthermore, 
by combining this, \eqref{eq:H3Emm}, \eqref{eq:H3Em} for $t=0$, 
and  \eqref{eq:bs1}, 
we obtain 
\begin{align}
\sup_{t\in [0,T]}
\|Q^{\ep}(t)\|_{H^m}^2
&\leqslant 
C(T,L_0,\|Q_0\|_{H^4})
\mathcal{E}_m(Q_0^{\ep})^2
\nonumber
\\
&\leqslant 
C(T,L_0,\|Q_0\|_{H^4})
\|Q_0^{\ep}\|_{H^m}^2
\label{eq:A153}
\\
&\leqslant 
C(T,L_0,\|Q_0\|_{H^4})
\|Q_0\|_{H^m}^2.
\label{eq:H153}
\end{align}
Since the right hand side of \eqref{eq:H153} is independent of $t$ and $\ep$, 
we conclude $\{Q^{\ep}\}_{\ep\in (0,1)}$ is bounded in 
$L^{\infty}(0,T;H^m(\RR;\mathbb{C}^n))$, 
which completes the proof of Proposition~\ref{proposition:preloc}.
\end{proof}
\begin{remark}
\label{remark:bs}
Once Proposition~\ref{proposition:preloc} is proved, 
the standard compactness argument shows 
there exists a subsequence of $\left\{Q^{\ep}\right\}_{\ep\in (0,1)}$ 
which converges to a limit $Q^{\star}$ weak$^{\star}$ 
in $L^{\infty}(0,T;H^m(\RR;\mathbb{C}^n))$ 
and (strongly) in $C([0,T]; H^{m-1}_{\text{loc}}(\RR;\mathbb{C}^n))$.
However, it is still not straightforward to verify 
that $Q^{\star}$ is actually a solution to \eqref{eq:apde}-\eqref{eq:adata}, 
in that the subconvergence of $\left\{Q^{\ep}\right\}_{\ep\in (0,1)}$ 
in $C([0,T]; H^{m-1}_{\text{loc}}(\RR;\mathbb{C}^n))$ 
seems to be insufficient to ensure the subconvergence of 
the nonlocal term $F_3(Q^{\ep}, \p_xQ^{\ep})$ 
to $F_3(Q^{\star}, \p_xQ^{\star})$ even in the sense of distribution. 
To avoid the argument to justify the above, we choose to take an advantage of 
the Bona-Smith approximation $\left\{Q_0^{\ep}\right\}_{\ep\in (0,1)}$  
satisfying \eqref{eq:bs1}-\eqref{eq:6093},  
which will be demonstrated in Sections \ref{section:BS} and \ref{section:prooflw}. 
\end{remark}
\section{Estimate for the difference of Bona-Smith approximated solutions} 
\label{section:BS} 
Let $m$ be an integer with $m\geqslant 4$. 
For $Q_0={}^t(Q_{01},\ldots,Q_{0n})\in H^m(\RR;\mathbb{C}^n)$, let 
$\left\{Q_{0}^{\ep}\right\}_{\ep\in (0,1)}$ be the Bona-Smith approximation of $Q_0$.
We denote $Q^{\mu}$ and $Q^{\nu}$ by corresponding solutions to 
\eqref{eq:bpde}-\eqref{eq:bdata} for $\ep=\mu$ and $\ep=\nu$ respectively, that is,  
\begin{alignat}{2}
 \left(
 \p_t+\mu^5\p_x^4-iM_a\p_x^4-M_b\p_x^3-iM_{\lambda}\p_x^2
 \right)
 Q^{\mu}
  &=F(Q^{\mu},\p_xQ^{\mu}, \p_x^2Q^{\mu}), 
 \label{eq:mapde}
 \\
  Q^{\mu}(0,x)
   &=
   Q_{0}^{\mu}(x),
 \label{eq:madata}
 \end{alignat}
 \begin{alignat}{2}
  \left(
  \p_t+\nu^5\p_x^4-iM_a\p_x^4-M_b\p_x^3-iM_{\lambda}\p_x^2
  \right)
  Q^{\nu}
   &=F(Q^{\nu},\p_xQ^{\nu}, \p_x^2Q^{\nu}), 
  \label{eq:napde}
  \\
   Q^{\nu}(0,x)
    &=
    Q_{0}^{\nu}(x).
  \label{eq:nadata}
  \end{alignat}
Proposition~\ref{proposition:preloc} which is proved in Section~\ref{section:local} ensures    
both $\left\{Q^{\mu}\right\}_{\mu\in(0,1)}$ and $\left\{Q^{\nu}\right\}_{\nu\in (0,1)}$ 
are uniformly bounded in $L^{\infty}(0,T;H^m(\RR;\mathbb{C}^n))$, where 
$T=T(\|Q_0\|_{H^4})>0$ is decided by \eqref{eq:extime} independently of $\mu$ and $\nu$. 
\par
The goal of this section is to get the following: 
\begin{proposition}
\label{proposition:cauchy}
There exists a constant $C=C(T,\|Q_0\|_{H^m})>1$ such that 
for all $\mu$ and $\nu$ satisfying 
$0<\mu\leqslant \nu<1$, 
\begin{align}
\|Q^{\mu}-Q^{\nu}\|_{C([0,T];H^1)}
&\leqslant 
C
(\nu^{m-1}+\nu^4),
\label{eq:1cauchy}
\\
\|Q^{\mu}-Q^{\nu}\|_{C([0,T];H^m)}
&\leqslant 
C\left(
\nu^{m-3}+\nu
+\|Q_0^{\mu}-Q_0^{\nu}\|_{H^m}
\right).
\label{eq:mcauchy}
\end{align}  
\end{proposition}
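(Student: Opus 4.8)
\medskip

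The plan is to estimate the difference $W:=Q^{\mu}-Q^{\nu}$ by exactly the same gauge-transformed energy method used to prove Proposition~\ref{proposition:preloc}, but now applied to the equation satisfied by $W$. First I would subtract \eqref{eq:napde} from \eqref{eq:mapde} to get
\begin{align}
\left(\p_t+\mu^5\p_x^4-iM_a\p_x^4-M_b\p_x^3-iM_{\lambda}\p_x^2\right)W
&=(\nu^5-\mu^5)\p_x^4Q^{\nu}
+F(Q^{\mu},\p_xQ^{\mu},\p_x^2Q^{\mu})-F(Q^{\nu},\p_xQ^{\nu},\p_x^2Q^{\nu}),
\nonumber
\end{align}
with $W(0,\cdot)=Q_0^{\mu}-Q_0^{\nu}$. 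The nonlinear difference is, schematically, linear in $W,\p_xW,\p_x^2W$ with coefficients that are polynomials in $Q^{\mu},Q^{\nu}$ and their derivatives up to second order; by Proposition~\ref{proposition:preloc} all such coefficients are bounded in $L^\infty(0,T;H^{m-2})\hookrightarrow L^\infty(0,T;W^{2,\infty})$ uniformly in $\mu,\nu$. The term $(\nu^5-\mu^5)\p_x^4Q^{\nu}$ is a forcing term; although it has four derivatives, $\|\p_x^4Q^{\nu}\|$ in high Sobolev norms is controlled (with a loss) by \eqref{eq:6092}-type bounds applied through Proposition~\ref{proposition:preloc}, and it carries the small prefactor $\nu^5-\mu^5=O(\nu^5)$, actually $O(\nu)$ after one absorbs $\nu^{-4}$ worth of derivative loss.

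\medskip

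For the $H^1$-estimate \eqref{eq:1cauchy} I would run a plain (un-gauged) $L^2$-type energy estimate on $W$ and $\p_xW$: since we only demand control of one derivative of the difference, the loss of one derivative coming from $F^1_j$ and $F^2_j$ can be tolerated — the ``bad'' terms $g(Q)\p_x^2W$ etc. are one order above $H^1$, but they are paired against $W$ or $\p_xW$, and integration by parts plus the uniform $L^\infty(0,T;H^m)$ bound on the $Q^\ep$ (with $m\ge4>3$) closes the estimate, exactly as in the ``permitting loss of one derivative'' step \eqref{eq:H132} of the previous proof. The forcing contributes $\|(\nu^5-\mu^5)\p_x^4Q^{\nu}\|_{H^1}\lesssim \nu^5\|Q^{\nu}\|_{H^5}$; here I would interpolate: $\|Q^{\nu}\|_{H^5}\le \|Q^{\nu}\|_{H^m}^{\theta}\|Q^{\nu}\|_{H^{m+k}}^{1-\theta}$ is not available, so instead I would use that $Q^{\nu}$ solves a Bona-Smith-regularized equation with data $Q_0^\nu\in H^\infty$ and that, by the same Gronwall argument as in Section~\ref{section:local} run at level $H^{m+1}$ or higher, $\|Q^{\nu}(t)\|_{H^{m+\ell}}\le C\ep^{-\ell}$ — combined with $\ep=\nu$ this gives $\nu^5\cdot\nu^{-(5-m)}=\nu^{m}$ or, being a little careful, the stated $\nu^{m-1}$; the extra $\nu^4$ in \eqref{eq:1cauchy} is the contribution of the initial-data difference $\|Q_0^\mu-Q_0^\nu\|_{H^1}\le\|Q_0^\mu-Q_0^\nu\|_{H^{m-(m-1)}}$, bounded by $C\nu^{m-1}$ via \eqref{eq:6093}, but since $0<\mu\le\nu$ one also has the cruder $\|Q_0^\mu-Q_0^\nu\|_{H^1}\lesssim\nu^4$ when $m\ge5$; for $m=4$ the two bounds coincide. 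Gronwall then yields \eqref{eq:1cauchy}.

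\medskip

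For the $H^m$-estimate \eqref{eq:mcauchy} the naive energy method fails because of the loss of one derivative, so here I would introduce the gauge-transformed variable for the difference in exact analogy with \eqref{eq:V_j}–\eqref{eq:Phi}, namely something like
\begin{align}
\mathcal{V}_j
&:=\p_x^mW_j+\frac{L}{4a_j}\Phi^{\mu}i\,\p_x^{m-1}W_j,
\nonumber
\end{align}
with the \emph{same} weight $\Phi^{\mu}=\int_{-\infty}^x|Q^\mu|^2\,dy$ and the \emph{same} large constant $L=L_0$ chosen in Section~\ref{section:local}. Differentiating the equation for $W$ by $\p_x^m$, commuting with the gauge factor, and using that the commutator of the gauge with the fourth-order principal part produces the good elliptic term $-L\int g(Q^\mu)|\p_x\mathcal{V}|^2$ which dominates the bad terms $\lesssim \int g(Q^\mu)|\p_x\mathcal{V}|^2$ (the constants $C_1^\star,C_2^\star$ being independent of $L$, just as before), one obtains an energy inequality
\begin{align}
\frac{d}{dt}\left(\|\mathcal{V}(t)\|_{L^2}^2+\|W(t)\|_{H^{m-1}}^2\right)
&\le C\left(\|\mathcal{V}(t)\|_{L^2}^2+\|W(t)\|_{H^{m-1}}^2\right)
+C\big(\nu^{m-3}+\nu\big)^2+\text{(l.o.t.)},
\nonumber
\end{align}
where the inhomogeneous right-hand side collects: the forcing $\|(\nu^5-\mu^5)\p_x^4Q^\nu\|_{H^m}\lesssim\nu^5\cdot\nu^{-(4)}\cdot\|Q^\nu\|_{\text{controlled}}=O(\nu)$ after absorbing four derivative losses via the $H^{m+\ell}$-bounds with $\ell$ chosen minimally, giving the $\nu$ in \eqref{eq:mcauchy}; lower-order terms from the nonlinear difference that involve $\p_x^2Q^\mu-\p_x^2Q^\nu$ or $\p_xQ^\mu-\p_xQ^\nu$ in $H^{m-2}$, which by interpolation between \eqref{eq:1cauchy} (an $H^1$ bound of size $\nu^{m-1}+\nu^4\lesssim\nu^4$) and the uniform $H^m$ bound are $O(\nu^{(4)\cdot\frac{1}{m-1}\cdot\text{something}})$ — the exponent bookkeeping here produces $\nu^{m-3}$; and the initial-data term $\|Q_0^\mu-Q_0^\nu\|_{H^m}$, which is kept as is since $Q_0^\mu\to Q_0$ but $Q_0^\nu\to Q_0$ only in $H^m$, not faster. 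Gronwall in $t\in[0,T]$, followed by transferring back from $\mathcal{V}$ to $\p_x^mW$ using the equivalence estimate analogous to \eqref{eq:H3Emm} (with constants depending only on $L_0$, $T$, and $\|Q_0\|_{H^m}$), gives \eqref{eq:mcauchy}.

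\medskip

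The main obstacle is the derivative-loss bookkeeping in the forcing term $(\nu^5-\mu^5)\p_x^4Q^\nu$: one must simultaneously exploit the smallness $\nu^5$, the regularizing bound $\|Q^\nu(t)\|_{H^{m+\ell}}\lesssim \nu^{-\ell}$ (which requires re-running the Proposition~\ref{proposition:preloc} Gronwall argument at arbitrary high regularity and checking the constants stay $\ep$-uniform), and interpolation, in such a way that the net powers of $\nu$ come out to exactly $\nu^{m-1}+\nu^4$ in $H^1$ and $\nu^{m-3}+\nu$ in $H^m$ rather than something worse. A secondary subtlety is that the gauge weight $\Phi^\mu$ is built from $Q^\mu$, not from the difference $W$, so when the commutator hits $\p_x\Phi^\mu=|Q^\mu|^2$ one gets genuine (not small) coefficients — but this is fine because these are precisely the terms the good elliptic term $-L_0\int g(Q^\mu)|\p_x\mathcal{V}|^2$ was designed to absorb, and the choice of $L_0$ made in Section~\ref{section:local} was already independent of everything relevant here.
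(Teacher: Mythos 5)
Your treatment of the $H^m$ estimate \eqref{eq:mcauchy} follows the paper's route: gauge the difference $W=Q^{\mu}-Q^{\nu}$ at top order with the weight $\Phi^{\mu}$, use the uniform bounds of Proposition~\ref{proposition:preloc} together with $\|Q^{\nu}\|_{C([0,T];H^{m+\ell})}\lesssim \nu^{-\ell}$ (from \eqref{eq:B51} and \eqref{eq:6092}) to tame the forcing $(\nu^5-\mu^5)\p_x^{m+4}Q^{\nu}$ and the cross terms $|W|\,|\p_x^{m+2}Q^{\nu}|$, and feed in the already-proved $H^1$ bound \eqref{eq:1cauchy}. That is exactly what the paper does.

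The genuine gap is in your proof of \eqref{eq:1cauchy}. You claim that at the $H^1$ level an \emph{un-gauged} energy estimate on $W$ and $\p_xW$ closes because ``the loss of one derivative can be tolerated.'' It cannot. The equation for $\p_xW_j$ contains $O\left(g(Q^{\mu})|\p_x^3W|\right)$ (from $I^{(1)}$) and $O\left(|Q^{\mu}||\p_x^2W|\right)$ (from $I^{(1)}$ and $I^{(2)}$); pairing these against $\overline{\p_xW_j}$ and integrating by parts (or applying Cauchy--Schwarz) produces $\int_{\RR} O\!\left(g(Q^{\mu})\right)|\p_x^2W|^2\,dx$ with uncontrolled sign, and $\|\p_x^2W\|_{L^2}$ is not controlled by $\|W\|_{H^1}$. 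The ``permitting loss of one derivative'' step \eqref{eq:H132} you invoke applies only to the \emph{lower} component $\|Q^{\ep}\|_{H^{m-1}}$ of an energy whose top component is separately tamed by the gauge; it does not apply to the top level of the $H^1$ difference energy. This is why the paper introduces the gauged variable $Z^1_j=\p_xW_j+\frac{L}{4a_j}\Phi^{\mu}i\,W_j$ (see \eqref{eq:Z_j} with $k=1$) and chooses $L$ large so that $-L\int_{\RR}g(Q^{\mu})|\p_xZ^1|^2\,dx$ absorbs these quadratic terms; the parabolic term $\mu^5\p_x^4$ cannot substitute since $\mu$ may be arbitrarily small. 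Since \eqref{eq:1cauchy} is an essential input to \eqref{eq:mcauchy}, the gap propagates. A secondary error: your attribution of the two powers of $\nu$ in \eqref{eq:1cauchy} is reversed --- the $\nu^4$ comes from the forcing, via $(\nu^5-\mu^5)\|Q^{\nu}\|_{H^5}\lesssim\nu^5\cdot\nu^{-1}$, while the $\nu^{m-1}$ comes from the initial-data difference $\|Q_0^{\mu}-Q_0^{\nu}\|_{H^1}$ via \eqref{eq:6093} with $\ell=m-1$.
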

\begin{proof}[Proof of Proposition~\ref{proposition:cauchy}]
For $\mu, \nu$ satisfying 
$0<\mu \leqslant \nu<1$, 
we set $W:=Q^{\mu}-Q^{\nu}$, that is, 
$W={}^t(W_1,\ldots,W_n)$ and
$
W_j
=Q_j^{\mu}-Q_j^{\nu}$ 
for $j\in \left\{1,\ldots,n\right\}$.
For $k\in \left\{1,\ldots,m\right\}$, we introduce a $\mathbb{C}^n$-valued function
$Z^{k}={}^t(Z^{k}_1,\ldots,Z^{k}_n)$, 
where 
\begin{align}
 &Z^{k}_j=Z^{k}_j(t,x)
 :=\p_x^kW_j(t,x)+\dfrac{L}{4a_j}\Phi^{\mu}(t,x)i\p_x^{k-1}W_j(t,x)
 \quad (j\in \left\{1,\ldots,n\right\}), 
 \label{eq:Z_j}
\\
 &\Phi^{\mu}=\Phi^{\mu}(t,x)
 :=\int_{-\infty}^{x}g(Q^{\mu}(t,y))\,dy
 \left(=\int_{-\infty}^{x}|Q^{\mu}(t,y)|^2\,dy\right), 
 \label{eq:Phia}
 \end{align} 
and $L>1$ is a sufficiently large constant which will be taken later independently of 
$j$, $\mu$, and $\nu$. 
Furthermore we define $\mathcal{E}^{\mu,\nu}_k(W)
=\mathcal{E}^{\mu,\nu}_k(W(t)):[0,T]\to [0,\infty)$ 
to satisfy 
\begin{align}
\mathcal{E}^{\mu,\nu}_k(W(t))^2
&=
\|Z^k(t)\|_{L^2}^2+\|W(t)\|_{H^{k-1}}^2.
\label{eq:Ekl}
\end{align}
We shall estimate $\mathcal{E}^{\mu,\nu}_1(W(t))$ and $\mathcal{E}^{\mu,\nu}_m(W(t))$ for $t\in [0,T]$
to get \eqref{eq:1cauchy} and \eqref{eq:mcauchy}. 
Roughly speaking, 
these estimates can be derived in the same way as we estimate   
\eqref{eq:Nml} in the previous section. 
The main point we need to care is that 
the estimate for the time-derivative of $\|Z^m(t)\|_{L^2}^2$
involves some terms including 
$\|\p_x^{m+j}Q^{\nu}(t)\|_{L^2}$ for $j\in \left\{1,\ldots,4\right\}$ 
which grow as $\nu\downarrow 0$ in relation with  \eqref{eq:6092}. 
To compensate the growth, we apply the decay properties of \eqref{eq:1cauchy} 
and the factor $\nu^5$ in \eqref{eq:napde}.
\par 
Before going to the detail, 
we here collect some estimates on $[0,T]$ and notation
used later. 
First, since the estimates
for the solution $Q^{\ep}$ to \eqref{eq:bpde}-\eqref{eq:bdata} 
in the previous section 
also hold for $Q^{\mu}$ (and $Q^{\nu}$) on $[0,T]$,  
it follows from  
\eqref{eq:A153} and \eqref{eq:H153} for $Q^{\mu}$ and $Q_0^{\mu}$, 
\begin{align}
\|Q^{\mu}\|_{C([0,T]:H^m)}
&\leqslant 
C_1(T, \|Q_0\|_{H^4})
\|Q_0^{\mu}\|_{H^m} 
\label{eq:B51}
\\
&\leqslant 
C_2(T,\|Q_0\|_{H^m}), 
\label{eq:B52}
\end{align} 
where $C_1(T, \|Q_0\|_{H^4})$ and $C_2(T,\|Q_0\|_{H^m})$ are positive 
constants depending also on $L_0$ (in the previous Section) but not on $\mu$.
Second, 
by a similar argument to obtain \eqref{eq:H3Em} 
and by \eqref{eq:B52}, there exists a positive constant 
$C_3(L,T,\|Q_0\|_{H^m})$ depending also on $L_0$ but not on $\mu$ such that 
 \begin{align}
 &\frac{\|W(t)\|_{H^k}^2}{C_3(L,T,\|Q_0\|_{H^m})}
 \leqslant 
 \mathcal{E}^{\mu,\nu}_k(W(t))^2
 \leqslant 
 C_3(L,T,\|Q_0\|_{H^m})\|W(t)\|_{H^k}^2
 \label{eq:H4Em}
 \end{align}
for any $t\in [0,T]$.
(Although constants $C_k(\cdot,\ldots,\cdot)$ 
appearing here and hereafter in this part may 
also depend on $L_0$, 
we omit to write it for simplicity. 
 By noting $L_0$ is a fixed constant to ensure Proposition~\ref{proposition:preloc}, 
 any confusion will not occur.)  
Moreover, in what follows in this part, 
 we use $B_k(\cdot)$ and $B_{L,k}(\cdot)$ for 
 an integer $k$ to denote 
 a positive-valued increasing function on $[0,\infty)$. 
 We use the latter only if the increasing function depends also on $L$.  
\vspace{0.5em}
\\
\underline{Proof of \eqref{eq:1cauchy}:} 
\\
We estimate $\mathcal{E}^{\mu,\nu}_1(W(t))$ for $t\in [0,T]$. 
Since $Q^{\mu}$ and $Q^{\nu}$ satisfy \eqref{eq:mapde} and \eqref{eq:napde} respectively, 
\begin{align}
&\left\{
\p_t+(\mu^5-ia_j)\p_x^4-b_j\p_x^3-i\lambda_j\p_x^2
\right\}
\p_xW_j
\nonumber
\\
&\qquad =
(\nu^5-\mu^5)\p_x^4(\p_xQ^{\nu}_j)+I^{(1)}+I^{(2)}+I^{(3)}, 
\nonumber
\end{align}
where 
\begin{align}
I^{(1)}
&:=
\p_x\left(
F_j^1(Q^{\mu},\p_x^2Q^{\mu})
\right)
-
\p_x\left(
F_j^1(Q^{\nu},\p_x^2Q^{\nu})
\right), 
\nonumber
\\
I^{(2)}
&:=
\p_x\left(
F_j^2(Q^{\mu},\p_xQ^{\mu})
\right)
-
\p_x\left(
F_j^2(Q^{\nu},\p_xQ^{\nu})
\right), 
\nonumber
\\
I^{(3)}
&:=
\p_x\left(
F_j^3(Q^{\mu},\p_xQ^{\mu})
\right)
-
\p_x\left(
F_j^3(Q^{\nu},\p_xQ^{\nu})
\right).
\nonumber
\end{align} 
Since $F_j^1$ satisfies the condition (F1),  
\begin{align}
\p_x\left(
F_j^1(Q^{\mu},\p_x^2Q^{\mu})
\right)
&=
O\left(
g(Q^{\mu})|\p_x^3Q^{\mu}|
\right)
+
O\left(
|\p_x\left\{g(Q^{\mu})\right\}|
|\p_x^2Q^{\mu}|
\right),
\nonumber
\\
\p_x\left(
F_j^1(Q^{\nu},\p_x^2Q^{\nu})
\right)
&=
O\left(
g(Q^{\nu})|\p_x^3Q^{\nu}|
\right)
+
O\left(
|\p_x\left\{g(Q^{\nu})\right\}|
|\p_x^2Q^{\nu}|
\right). 
\nonumber
\end{align}
By taking the difference between both sides, 
we deduce 
\begin{align}
I^{(1)}
&=
O\left(g(Q^{\mu})|\p_x^3W|\right)
+
O\left(
|\p_x\left\{g(Q^{\mu})\right\}||\p_x^2W|
\right)
+r_1+r_2
\nonumber
\\
&=
O\left(g(Q^{\mu})|\p_x^3W|\right)
+
O\left(
\|Q^{\mu}(t)\|_{H^2}|Q^{\mu}||\p_x^2W|
\right)
+r_1+r_2,
\nonumber
\end{align}
where 
\begin{align}
r_1&=O
\left(
(|Q^{\mu}|+|Q^{\nu}|)
|Q^{\mu}-Q^{\nu}|
|\p_x^3Q^{\nu}|
\right), 
\nonumber
\\
r_2&=
O\left(
(|Q^{\mu}|+|Q^{\nu}|)
|\p_xQ^{\mu}-\p_xQ^{\nu}|
|\p_x^2Q^{\nu}|
\right)
\nonumber
\\
&\quad
+
O\left(
(|\p_xQ^{\mu}|+|\p_xQ^{\nu}|)|Q^{\mu}-Q^{\nu}|
|\p_x^2Q^{\nu}|
\right). 
\nonumber
\end{align}
It is easy to deduce 
\begin{align}
\|r_1(t)\|_{L^2}
&\leqslant C
(\|Q^{\mu}(t)\|_{L^{\infty}}+\|Q^{\nu}(t)\|_{L^{\infty}})
\|(Q^{\mu}-Q^{\nu})(t)\|_{L^{\infty}}
\|\p_x^3Q^{\nu}(t)\|_{L^2}
\nonumber
\\
&
\leqslant 
B_1(\|Q^{\mu}(t)\|_{H^1}+\|Q^{\nu}(t)\|_{H^3})
\|W(t)\|_{H^1}, 
\nonumber
\end{align}
\begin{align}
\|r_2(t)\|_{L^2}
&\leqslant 
C
(\|Q^{\mu}(t)\|_{L^{\infty}}+\|Q^{\nu}(t)\|_{L^{\infty}})
\|(\p_xQ^{\mu}-\p_xQ^{\nu})(t)\|_{L^2}
\|\p_x^2Q^{\nu}(t)\|_{L^{\infty}}
\nonumber
\\
&\quad 
+C
(\|\p_xQ^{\mu}(t)\|_{L^{2}}
+\|\p_xQ^{\nu}(t)\|_{L^{2}})
\|(Q^{\mu}-Q^{\nu})(t)\|_{L^{\infty}}
\|\p_x^2Q^{\nu}(t)\|_{L^{\infty}}
\nonumber
\\
&\leqslant 
B_2(\|Q^{\mu}(t)\|_{H^1}+\|Q^{\nu}(t)\|_{H^3})
\|W(t)\|_{H^1}.
\nonumber 
\end{align} 
Since $F_j^2$ satisfies (F2), 
the following holds for 
both $\ep=\mu$ and $\ep=\nu$: 
\\
If $d_2=0$, then 
$$
\p_x\left(
F_j^2(Q^{\ep},\p_xQ^{\ep})
\right)
=
O\left(
\sum_{p_1=0}^{d_1}
|Q^{\ep}|^{p_1}|\p_xQ^{\ep}|
\right). 
$$
If $d_2\geqslant 1$, then  
\begin{align}
\p_x\left(
F_j^2(Q^{\ep},\p_xQ^{\ep})
\right)
&=
O\left(
\sum_{p_1=0}^{d_1}
\sum_{p_2=1}^{d_2}
|Q^{\ep}|^{1+p_1}|\p_xQ^{\ep}|^{p_2-1}|\p_x^2Q^{\ep}|
\right)
\nonumber
\\
&\quad
+
O\left(
\sum_{p_1=0}^{d_1}
\sum_{p_2=0}^{d_2}
|Q^{\ep}|^{p_1}|\p_xQ^{\ep}|^{p_2+1}
\right)
\nonumber
\\
&\quad
+
O\left(
\sum_{p_1=0}^{d_1}
|Q^{\ep}|^{p_1}|\p_xQ^{\ep}|
\right). 
\nonumber
\end{align}
In both cases, it follows that 
\begin{align}
I^{(2)}
&=
O\left(
B_3(
\|Q^{\mu}(t)\|_{L^{\infty}}
+
\|\p_xQ^{\mu}(t)\|_{L^{\infty}}
)
|Q^{\mu}|
|\p_x^2W|
\right)
+r_3
\nonumber
\\
&=
O\left(
B_3(
\|Q^{\mu}(t)\|_{H^2}
)
|Q^{\mu}|
|\p_x^2W|
\right)
+r_3, 
\nonumber
\end{align}
where 
\begin{align}
\|r_3(t)\|_{L^2}
&\leqslant 
B_4(\|Q^{\mu}(t)\|_{H^2}+\|Q^{\nu}(t)\|_{H^3})
\|W(t)\|_{H^1}.
\nonumber
\end{align}
Since $F_{j,r}^{3,A}$ and $F_{j,r}^{3,B}$ satisfy (F3), 
the following holds for both $\ep=\mu$ and $\ep=\nu$: 
\begin{align}
&\p_x\left(
F_j^3(Q^{\ep},\p_xQ^{\ep})
\right)
\nonumber
\\
&=
\sum_{r=1}^{n}
F_{j,r}^{3,A}(Q^{\ep},\p_xQ^{\ep})
F_{j,r}^{3,B}(Q^{\ep})
\nonumber
\\
&\quad
+\sum_{r=1}^{n}
\left(
\int_{-\infty}^{x}
F_{j,r}^{3,A}(Q^{\ep},\p_xQ^{\ep})(t,y)dy
\right)
\p_x\left(
F_{j,r}^{3,B}(Q^{\ep})
\right)
\nonumber
\\
&=
O\left(
\sum_{p_3=0}^{d_3}
\sum_{p_5=0}^{d_5}
|Q^{\ep}|^{3+p_3+p_5}
+
\sum_{p_4=0}^{d_4}
\sum_{p_5=0}^{d_5}
|Q^{\ep}|^{1+p_5}|\p_xQ^{\ep}|^{2+p_4}
\right)
\nonumber
\\
&\quad
+\left(
\int_{-\infty}^{x}
O\left(
\sum_{p_3=0}^{d_3}
|Q^{\ep}|^{2+p_3}
+
\sum_{p_4=0}^{d_4}
|\p_xQ^{\ep}|^{2+p_4}
\right)(t,y)dy
\right)
\nonumber
\\
&\qquad \quad \times
O\left(
\sum_{p_5=0}^{d_5}
|Q^{\ep}|^{p_5}|\p_xQ^{\ep}|
\right).
\nonumber
\end{align}
Therefore, it is now easy to obtain 
\begin{align}
\|I^{(3)}(t)\|_{L^2}
&\leqslant 
B_5(\|Q^{\mu}(t)\|_{H^2}+
\|Q^{\nu}(t)\|_{H^2}
)\|W(t)\|_{H^1}.
\nonumber
\end{align}
Gathering them, we obtain 
\begin{align}
&\left\{
\p_t+(\mu^5-ia_j)\p_x^4-b_j\p_x^3-i\lambda_j\p_x^2
\right\}
\p_xW_j
\nonumber
\\
&=
O\left(g(Q^{\mu})|\p_x^3W|\right)
+
O\left(
B_6(\|Q^{\mu}(t)\|_{H^2})
|Q^{\mu}||\p_x^2W|
\right)
\nonumber
\\
&\quad 
+(\nu^5-\mu^5)\p_x^4(\p_xQ^{\nu}_j)
+r_4, 
\label{eq:8311}
\end{align}
where 
\begin{align}
\|r_4(t)\|_{L^2}
\leqslant 
B_7(\|Q^{\mu}(t)\|_{H^2}+\|Q^{\nu}(t)\|_{H^3})
\|W(t)\|_{H^1}.
\nonumber
\end{align}
We next compute the right hand side of 
 \begin{align}
 &
 \p_t\left(
 \dfrac{L}{4a_j}\Phi^{\mu}iW_j
 \right)
 =
 \dfrac{L}{4a_j}\Phi^{\mu}i\p_tW_j
 +
 \dfrac{L}{4a_j}(\p_t\Phi^{\mu})iW_j.
 \label{eq:8312}
 \end{align}
The argument is almost the same as that to obtain 
\eqref{eq:91}-\eqref{eq:101} and \eqref{eq:102}-\eqref{eq:103}. 
First, it is not difficult  to show 
\begin{align}
\p_tW_j
&=
\left\{
(-\mu^5+ia_j)\p_x^4+b_j\p_x^3+i\lambda_j\p_x^2
\right\}W_j
\nonumber
\\
&\quad 
+
O\left(g(Q^{\mu})|\p_x^2W|\right)
+
(\nu^5-\mu^5)\p_x^4Q_j^{\nu}
+
r_5, 
\nonumber
\end{align}
where
\begin{align}
&\|r_5(t)\|_{L^2}
\leqslant 
B_8(\|Q^{\mu}(t)\|_{H^2}+\|Q^{\nu}(t)\|_{H^3})
\|W(t)\|_{H^1}.
\nonumber
\end{align}
Substituting this and using 
$\|\Phi^{\mu}(t)\|_{L^{\infty}}
\leqslant 
\|Q^{\mu}(t)\|_{L^2}^2$, we deduce 
\begin{align}
&\dfrac{L}{4a_j}\Phi^{\mu}i\p_tW_j
=
 \left\{(-\mu^5+ia_j)\p_x^4+b_j\p_x^3+i\lambda_j\p_x^2
 \right\}
 \left(
 \dfrac{L}{4a_j}\Phi^{\mu}iW_j
 \right)
\nonumber
\\
&\qquad \qquad   \qquad \quad
+\left(
\dfrac{\mu^5 i}{a_j}+1
\right)
L\p_x\left\{g(Q^{\mu})
\p_x^2W_j\right\}
\nonumber
\\
&\qquad \qquad   \qquad \quad
+O(B_{L,1}(
\|Q^{\mu}(t)\|_{H^2})
|Q^{\mu}||\p_x^2W|)
\nonumber
\\
&\qquad \qquad   \qquad \quad
+(\nu^5-\mu^5)\dfrac{L}{4a_j}\,
O\left(\|Q^{\mu}(t)\|_{L^2}^2\right)\p_x^4Q^{\nu}_j
   +r_6, 
\label{eq:8313}
\end{align}
where
\begin{align}
  &\|r_6(t)\|_{L^2}
   \leqslant 
 B_{L,2}(\|Q^{\mu}(t)\|_{H^3}+\|Q^{\nu}(t)\|_{H^3})
 \|W(t)\|_{H^1}.
   \nonumber
\end{align}
Second, 
the same computation to obtain \eqref{eq:101} shows    
\begin{align}
\left\|
\left(
\dfrac{L}{4a_j}(\p_t\Phi^{\mu})iW_j
\right)(t)
\right\|_{L^2}
&\leqslant 
B_{L,3}(\|Q^{\mu}(t)\|_{H^4})\|W(t)\|_{H^1}. 
\label{eq:8314}
\end{align}
Applying \eqref{eq:8313} and \eqref{eq:8314} to \eqref{eq:8312}, and combining this with 
\eqref{eq:8311}, we obtain 
\begin{align}
\p_tZ^1_j
&=
 \left\{(-\mu^5+ia_j)\p_x^4+b_j\p_x^3+i\lambda_j\p_x^2
 \right\}
Z^1_j
\nonumber
\\
&\quad
+
O\left(g(Q^{\mu})|\p_x^3W|\right)
+\left(
\dfrac{\mu^5 i}{a_j}+1
\right)L\p_x\left\{g(Q^{\mu})
\p_x^2W_j\right\}
\nonumber
\\
&\quad 
+
O\left(
B_{L,4}(\|Q^{\mu}(t)\|_{H^2})
|Q^{\mu}||\p_x^2W|
\right)
\nonumber
\\
&\quad
+
(\nu^5-\mu^5)
\left\{
\p_x^4(\p_xQ_j^{\nu})
+
\dfrac{L}{4a_j}
O\left(\|Q^{\mu}(t)\|_{L^2}^2\right)\p_x^4Q^{\nu}_j
\right\}
+r_7, 
\nonumber
\end{align}
where
\begin{align}
  \|r_7(t)\|_{L^2}
   &\leqslant 
 B_{L,5}(\|Q^{\mu}(t)\|_{H^4}+\|Q^{\nu}(t)\|_{H^3})
 \|W(t)\|_{H^1}.
\nonumber
\end{align}
Furthermore, using 
$\p_xW_j=Z_j^1-L\,O(\|Q^{\mu}(t)\|_{L^2}^2|W_j|)$ 
which follows from \eqref{eq:Z_j} for $k=1$, 
we obtain 
\begin{align}
\p_tZ^1_j
&=
\left\{(-\mu^5+ia_j)\p_x^4+b_j\p_x^3+i\lambda_j\p_x^2
 \right\}
Z^1_j
\nonumber
\\
&\quad 
+
O\left(g(Q^{\mu})|\p_x^2Z^1|\right)
+\left(
\dfrac{\mu^5 i}{a_j}+1
\right)L\p_x\left\{g(Q^{\mu})
\p_xZ_j^1\right\}
\nonumber
\\
&\quad
+
O\left(
B_{L,6}(\|Q^{\mu}(t)\|_{H^2})
|Q^{\mu}||\p_xZ^1|
\right)
\nonumber
\\
&\quad
+
(\nu^5-\mu^5)
\left\{
\p_x^4(\p_xQ_j^{\nu})
+
\dfrac{L}{4a_j}
O\left(\|Q^{\mu}(t)\|_{L^2}^2\right)\p_x^4Q^{\nu}_j
\right\}
+r_8, 
\label{eq:8315}
\end{align}
where 
\begin{align} 
  \|r_8(t)\|_{L^2}
   &\leqslant 
 B_{L,7}(\|Q^{\mu}(t)\|_{H^4}+\|Q^{\nu}(t)\|_{H^3})
 \|W(t)\|_{H^1}.
 \label{eq:8316}
\end{align}
\par 
By the almost same way to obtain \eqref{eq:125},  
we use \eqref{eq:8315} with \eqref{eq:8316} to derive 
\begin{align}
&\frac{1}{2}\frac{d}{dt}
\|Z^1(t)\|_{L^2}^2
+
\mu^5\|\p_x^2Z^1(t)\|_{L^2}^2 
\nonumber
\\
&
\leqslant 
(-L+C_{\star})\int_{\RR}
g(Q^{\mu})|\p_xZ^1|^2\,dx
+J_1+J_2, 
\label{eq:9020}
\end{align}
where 
\begin{align}
J_1&:=
(\nu^5-\mu^5)
\sum_{j=1}^{n}
\Re
\int_{\RR}
\left\{
\p_x^4(\p_xQ_j^{\nu})
+
\dfrac{L}{4a_j}
O\left(\|Q^{\mu}(t)\|_{L^2}^2\right)\p_x^4Q^{\nu}_j
\right\}
\overline{Z_j^1}
\,dx, 
\nonumber
\\
J_2&:=
B_{L,7}(\|Q^{\mu}(t)\|_{H^4}+\|Q^{\nu}(t)\|_{H^3})
\|W(t)\|_{H^1}\|Z^1(t)\|_{L^2}, 
\nonumber
\end{align}
and $C_{\star}>0$ is a positive constant which is independent of $L$. 
Recall here that \eqref{eq:B52} ensures $\|Q^{\mu}\|_{C([0,T];H^4)}$ 
and $\|Q^{\nu}\|_{C([0,T];H^4)}$  
are bounded by a positive constant depending 
on $T$ and $\|Q_0\|_{H^m}$ but not on $\mu$ and $\nu$. 
From this and \eqref{eq:H4Em}, it is easy to have   
\begin{align}
J_2&\leqslant 
C_4(L,T,\|Q_0\|_{H^m})
\mathcal{E}^{\mu,\nu}_1(W(t))^2. 
\label{eq:2304011}
\end{align}
In addition, it follows that 
\begin{align}
J_1&\leqslant 
C_5(L,T,\|Q_0\|_{H^m})
(\nu^5-\mu^5)\|Q^{\nu}(t)\|_{H^5}\|Z^1(t)\|_{L^2}.
\nonumber
\end{align}
Here, applying \eqref{eq:B51} for $m=5$, we have  
$$
\|Q^{\nu}\|_{C([0,T];H^5)}
\leqslant 
C_6(T,\|Q_0\|_{H^4})
\|Q_0^{\nu}\|_{H^5}.
$$
Moreover, applying \eqref{eq:6092} for $\ep=\nu$, $m=4$ and $\ell=1$, 
we have  
$$
\|Q_0^{\nu}\|_{H^5}
\leqslant 
C\nu^{-1}\|Q_0\|_{H^4}.
$$ 
Combining them, we obtain 
\begin{align}
J_1&\leqslant 
C_7(L,T,\|Q_0\|_{H^m})
(\nu^5-\mu^5)\nu^{-1}\|Z^1(t)\|_{L^2}
\nonumber
\\
&
\leqslant 
C_7(L,T,\|Q_0\|_{H^m})
\nu^4\mathcal{E}^{\mu,\nu}_1(W(t)).
\nonumber
\end{align}
Consequently, going back to \eqref{eq:9020} and 
taking $L=L_1>1$ so that $-L_1+C_{\star}< 0$, we have  
\begin{align}
\frac{1}{2}\frac{d}{dt}
\|Z^1(t)\|_{L^2}^2
&\leqslant 
C_8(L_1,T,\|Q_0\|_{H^m})
\left(\mathcal{E}^{\mu,\nu}_1(W(t))^2
+\nu^4\mathcal{E}^{\mu,\nu}_1(W(t))
\right).
\nonumber
\end{align} 
On the other hand, it is now not difficult  to show 
\begin{align}
\frac{1}{2}\frac{d}{dt}
\|W(t)\|_{L^2}^2
&\leqslant 
C_{9}(L_1,T,\|Q_0\|_{H^m})
\left(\mathcal{E}^{\mu,\nu}_1(W(t))^2
+\nu^5\mathcal{E}^{\mu,\nu}_1(W(t))
\right). 
\nonumber
\end{align}
Combining them, we have 
\begin{align}
\frac{d}{dt}
\mathcal{E}^{\mu,\nu}_1(W(t))^2
&\leqslant 
C_{10}(L_1,T,\|Q_0\|_{H^m})
\left(\mathcal{E}^{\mu,\nu}_1(W(t))^2
+\nu^4\mathcal{E}^{\mu,\nu}_1(W(t))
\right). 
\label{eq:8317}
\end{align} 
The Gronwall inequality for \eqref{eq:8317} shows
\begin{align}
\mathcal{E}^{\mu,\nu}_1(W(t))
&\leqslant
C_{11}(L_1,T,\|Q_0\|_{H^m})
(\mathcal{E}^{\mu,\nu}_1(W(0))+\nu^4), 
\nonumber
\end{align}
and hence the equivalence \eqref{eq:H4Em} shows 
\begin{align}
\|W(t)\|_{H^1}
&\leqslant
C_{12}(L_1,T,\|Q_0\|_{H^m})
(\|W(0)\|_{H^1}+\nu^4)
\nonumber
\\
&=
C_{12}(L_1,T,\|Q_0\|_{H^m})
(\|Q^{\mu}_0-Q^{\nu}_0\|_{H^1}+\nu^4)
\label{eq:9031}
\end{align}
for any $t\in [0,T]$. 
This combined with the triangle inequality 
$\|Q^{\mu}_0-Q^{\nu}_0\|_{H^1}
\leqslant 
\|Q^{\mu}_0-Q_0\|_{H^1}
+\|Q_0-Q^{\nu}_0\|_{H^1}$ 
and \eqref{eq:6093} (where $\ell=m-1$) for $0<\mu\leqslant \nu<1$
implies 
\begin{align}
\|W\|_{C([0,T];H^1)}
&
\leqslant
C_{13}(L_1,T,\|Q_0\|_{H^m})(\mu^{m-1}+\nu^{m-1}+\nu^4)
\nonumber
\\
&
\leqslant 
2C_{13}(L_1,T,\|Q_0\|_{H^m})(\nu^{m-1}+\nu^4), 
\label{eq:90312}
\end{align}
which is the desired \eqref{eq:1cauchy}.
\vspace{0.5em}
\\
\underline{Proof of \eqref{eq:mcauchy}:}
\\
We estimate $\mathcal{E}^{\mu,\nu}_m(W(t))$ for $t\in [0,T]$. 
Recall again that 
\eqref{eq:B52} shows the existence of a positive constant $D_1=D_1(T,\|Q_0\|_{H^m})$ which is independent of 
$\mu$ and $\nu$ such that 
\begin{equation}
\|Q^{\mu}\|_{C([0,T];H^m)}
+
\|Q^{\nu}\|_{C([0,T];H^m)}
\leqslant 
D_1(T,\|Q_0\|_{H^m}).
\label{eq:9010} 
\end{equation}
The fact will be used hereafter to show \eqref{eq:mcauchy} 
sometimes without any comments. 
Other constants which are independent of  $\mu$ and $\nu$ 
will be denoted by $D_k=D_k(\cdot,\ldots,\cdot)$ for some integer $k=2,3,\ldots$. 
In addition, we use $s_{m,k}$ for an integer $k$ to denote 
a function of $(t,x)$ satisfying 
\begin{align}
\|s_{m,k}(t)\|_{L^2}
&\leqslant 
D_k(T,\|Q_0\|_{H^m})\|W(t)\|_{H^m} 
\quad 
\text{for any $t\in [0,T]$.}
\label{eq:9012}
\end{align} 
We use $s_{m,L,k}$ instead of $s_{m,k}$ 
only when the above $D_k$ depends also on $L$.  
\par  
By taking the difference between  
\eqref{eq:Ujm} for $Q^{\mu}$ and that for $Q^{\nu}$, 
\begin{align}
\p_t\p_x^mW_j
&=
\p_t\p_x^mQ_j^{\mu}-\p_t\p_x^mQ_j^{\nu}
\nonumber
\\
&=
\left\{
(-\mu^5+ia_j)\p_x^4+b_j\p_x^3+i\lambda_j\p_x^2
\right\}\p_x^mW_j
\nonumber
\\
&\quad
+(\nu^5-\mu^5)\p_x^4(\p_x^mQ_j^{\nu})
+I_m^{(1)}
+
I_m^{(2)}
+
I_m^{(3)}, 
\nonumber
\end{align}
where 
\begin{align}
I_m^{(1)}
&:=
\p_x^m\left(
F_j^1(Q^{\mu},\p_x^2Q^{\mu})
\right)
-
\p_x^m\left(
F_j^1(Q^{\nu},\p_x^2Q^{\nu})
\right), 
\nonumber
\\
I_m^{(2)}
&:=
\p_x^m\left(
F_j^2(Q^{\mu},\p_xQ^{\mu})
\right)
-
\p_x^m\left(
F_j^2(Q^{\nu},\p_xQ^{\nu})
\right), 
\nonumber
\\
I_m^{(3)}
&:=
\p_x^m\left(
F_j^3(Q^{\mu},\p_xQ^{\mu})
\right)
-
\p_x^m\left(
F_j^3(Q^{\nu},\p_xQ^{\nu})
\right).
\nonumber
\end{align} 
Noting \eqref{eq:F1m} with \eqref{eq:re1}, 
and using \eqref{eq:9010}, 
we deduce 
\begin{align}
I_m^{(1)}
&=
O\left(
g(Q^{\mu})|\p_x^2(\p_x^mW)|
\right)
+O\left(
|Q^{\mu}||\p_x(\p_x^mW)|
\right)
\nonumber
\\
&\quad
+O\left(
|W||\p_x^2(\p_x^mQ^{\nu})|
\right)
+
O\left(
(|\p_xW|+|W|)|\p_x(\p_x^mQ^{\nu})|
\right)
+s_{m,1}.
\nonumber
\end{align}
In the same way as above, it is not difficult to deduce  
\begin{align}
I_m^{(2)}
&=
O\left(
|Q^{\mu}||\p_x(\p_x^mW)|
\right)
+
O\left(
|W|
\p_x(\p_x^mQ^{\nu})|
\right)
+s_{m,2},
\nonumber
\\
I_m^{(3)}
&=s_{m,3}.
\nonumber
\end{align}
Combining them, we have 
\begin{align}
\p_t\p_x^mW_j
&=
\left\{
(-\mu^5+ia_j)\p_x^4+b_j\p_x^3+i\lambda_j\p_x^2
\right\}\p_x^mW_j
\nonumber
\\
&\quad 
+(\nu^5-\mu^5)\p_x^{m+4}Q_j^{\nu}
+
O\left(
g(Q^{\mu})|\p_x^{m+2}W|
\right)
\nonumber
\\
&\quad
+O\left(
|Q^{\mu}||\p_x^{m+1}W|
\right)
+O\left(
|W||\p_x^{m+2}Q^{\nu}|
\right)
\nonumber
\\
&\quad
+
O\left(
(|\p_xW|+|W|)|\p_x^{m+1}Q^{\nu}|
\right)
+s_{m,1}+s_{m,2}+s_{m,3}.
\label{eq:9011}
\end{align}
In the same way as above,  
we obtain  
\begin{align}
\p_t\p_x^{m-1}W_j
&=
\left\{
(-\mu^5+ia_j)\p_x^4+b_j\p_x^3+i\lambda_j\p_x^2
\right\}
\p_x^{m-1}W_j
\nonumber
\\
&\quad 
+(\nu^5-\mu^5)\p_x^{m+3}Q_j^{\nu}
+
O\left(
|Q^{\mu}||\p_x^{m+1}W|
\right)
\nonumber
\\
&\quad
+O\left(
|W||\p_x^{m+1}Q^{\nu}|
\right)
+s_{m,4}.
\nonumber
\end{align}
Hence, by
the almost 
same computation
to obtain \eqref{eq:91}
and \eqref{eq:8313}, 
we derive  
\begin{align}
&\dfrac{L}{4a_j}\Phi^{\mu}i\p_t\p_x^{m-1}W_j
\nonumber
\\
&=
\left\{
(-\mu^5+ia_j)\p_x^4+b_j\p_x^3+i\lambda_j\p_x^2
\right\}
 \left(
 \dfrac{L}{4a_j}\Phi^{\mu}i
 \p_x^{m-1}W_j
 \right)
 \nonumber
 \\
 &\quad
+\left(
\dfrac{\mu^5 i}{a_j}
+1
\right)
L\p_x\left\{g(Q^{\mu})
\p_x^{m+1}W_j\right\}
\nonumber
\\
&\quad
+L\,O(|Q^{\mu}||\p_x^{m+1}W|)
+L\,O(|W||\p_x^{m+1}Q^{\nu}|)
\nonumber
\\
&\quad
+(\nu^5-\mu^5)\dfrac{L}{4a_j}
O\left(\|Q^{\mu}(t)\|_{L^2}^2\right)
\p_x^{m+3}Q_j^{\nu}
   +s_{m,L,5}.
   \label{eq:9013}
\end{align}
The same computation to obtain 
\eqref{eq:101} and \eqref{eq:8314} shows 
\begin{align}
\left\|
\left(
\dfrac{L}{4a_j}(\p_t\Phi^{\mu})i\p_x^{m-1}W_j
\right)(t)
\right\|_{L^2}
&\leqslant 
C\, L\|W(t)\|_{H^m}. 
\label{eq:9014}
\end{align}
Combining \eqref{eq:9011}, 
\eqref{eq:9013}, and \eqref{eq:9014}, 
and using \eqref{eq:Z_j} for $k=m$, 
we deduce
\begin{align}
\p_tZ^m_j
&=
\left\{
(-\mu^5+ia_j)\p_x^4+b_j\p_x^3+i\lambda_j\p_x^2
\right\}
Z^m_j
+
O\left(
g(Q^{\mu})|\p_x^2Z^m |
\right)
\nonumber
\\
&\quad
+\left(
\dfrac{\mu^5 i}{a_j}
+1
\right)L\p_x\left\{g(Q^{\mu})
\p_xZ^m_j
\right\}
\nonumber
\\
&\quad
+(1+L)O\left(
|Q^{\mu}||\p_xZ^m|
\right)
+O\left(
|W||\p_x^{m+2}Q^{\nu}|
\right)
\nonumber
\\
&\quad
+(1+L)
O\left(
(|\p_xW|+|W|)|\p_x^{m+1}Q^{\nu}|
\right)
\nonumber
\\
&\quad
+(\nu^5-\mu^5)
\left\{
\p_x^{m+4}Q_j^{\nu}+
\dfrac{L}{4a_j}
O\left(\|Q^{\mu}(t)\|_{L^2}^2\right)
\p_x^{m+3}Q_j^{\nu}
\right\}
+s_{m,L,6}.  
\nonumber
\end{align}
Therefore, in the same way as we obtain \eqref{eq:125}, 
we use the integration by part, the Young inequality and  
\eqref{eq:H4Em} for $k=m$ to deduce  
\begin{align}
&\frac{1}{2}\dfrac{d}{dt}
\|Z^m(t)\|_{L^2}^2
\nonumber
\\
&\leqslant 
(C^{*}-L)
\int_{\RR}
g(Q^{\mu})|\p_xZ^m|^2\,dx
+D_{2}(L,T,\|Q_0\|_{H^m})\mathcal{E}^{\mu,\nu}_m(W(t))^2
\nonumber
\\
&\quad
+D_3(T,\|Q_0\|_{H^m})
\|W(t)\|_{L^{\infty}}
\|\p_x^{m+2}Q^{\nu}(t)\|_{L^2}
\|Z^m(t)\|_{L^2}
\nonumber
\\
&\quad
+D_4(L,T,\|Q_0\|_{H^m})
\|\p_xW(t)\|_{L^{2}}
\|\p_x^{m+1}Q^{\nu}(t)\|_{L^{\infty}}
\|Z^m(t)\|_{L^2}
\nonumber
\\
&\quad
+D_5(L,T,\|Q_0\|_{H^m})
\|W(t)\|_{L^{\infty}}
\|\p_x^{m+1}Q^{\nu}(t)\|_{L^2}
\|Z^m(t)\|_{L^2}
\nonumber
\\
&\quad
+(\nu^5-\mu^5)
\sum_{j=1}^{n}
\Re\int_{\RR}
\left\{
\p_x^{m+4}Q_j^{\nu}+
\dfrac{L}{4a_j}
O\left(\|Q^{\mu}(t)\|_{L^2}^2\right)
\p_x^{m+3}Q_j^{\nu}
\right\}
\overline{Z_j^m}
dx
\nonumber
\\
&\leqslant 
(C^{*}-L)
\int_{\RR}
g(Q^{\mu})|\p_xZ^m|^2\,dx
+D_{2}(L,T,\|Q_0\|_{H^m})\mathcal{E}^{\mu,\nu}_m(W(t))^2
\nonumber
\\
&\quad
+D_6(L,T,\|Q_0\|_{H^m})
\|W(t)\|_{H^1}
\|Q^{\nu}(t)\|_{H^{m+2}}
\|Z^m(t)\|_{L^2}
\nonumber
\\
&\quad
+D_7(L,T,\|Q_0\|_{H^m})
(\nu^5-\mu^5)
\|Q^{\nu}(t)\|_{H^{m+4}}
\|Z^m(t)\|_{L^2}.
\label{eq:9016}
\end{align} 
By the same reason as that we choose $L_0$ and $L_1$, 
we can take the constant $C^{*}>0$ independently of $L$ 
and hence can take a positive constant $L=L_2$ 
so that $C^{*}-L_2<0$. 
Furthermore, as the estimate \eqref{eq:B51} holds even when $m$ is replaced with $m+j$ for 
$j=1,2,\ldots$,  
\begin{align}
\|Q^{\nu}\|_{C([0,T];H^{m+j})}
&\leqslant 
D_8(T,\|Q_0\|_{H^4})
\|Q_0^{\nu}\|_{H^{m+j}}
\quad 
(j=1,2,\ldots).
\nonumber
\end{align}
Noting \eqref{eq:6092}, we see  
the left hand side of the above grows up as $\nu\downarrow 0$, that is, 
\begin{align}
\|Q^{\nu}\|_{C([0,T];H^{m+j})}
&\leqslant 
D_9(T,\|Q_0\|_{H^m})
\nu^{-j}
\quad 
(j=1,2,\ldots).
\label{eq:90162}
\end{align}
Combining \eqref{eq:90162} for $j=2$ and  
\eqref{eq:90312},  
we deduce
\begin{align}
&\|W(t)\|_{H^1}
\|Q^{\nu}(t)\|_{H^{m+2}}
\leqslant 
D_{10}(L_1, T,\|Q_0\|_{H^m})
(\nu^{(m-1)-2}+\nu^{4-2}).
\label{eq:9017}
\end{align}
In the same way, we apply \eqref{eq:90162} for $j=4$ to obtain 
\begin{align}
(\nu^5-\mu^5)
\|Q^{\nu}(t)\|_{H^{m+4}}
&\leqslant 
D_{11}(T,\|Q_0\|_{H^m})
(\nu^5-\mu^5)
\nu^{-4}
\nonumber
\\
&\leqslant 
D_{11}(T,\|Q_0\|_{H^m})
\nu.
\label{eq:90172}
\end{align}
Combining \eqref{eq:9016} with the above choice of $L=L_2$, 
\eqref{eq:9017}-\eqref{eq:90172}, 
\eqref{eq:H4Em} for $k=m$, 
and noting $0<\nu<1$,  
we get 
\begin{align}
\frac{d}{dt}\|Z^m(t)\|_{L^2}^2
\leqslant 
D_{12}
\left\{
\mathcal{E}^{\mu,\nu}_m(W(t))^2
+
(\nu^{m-3}+\nu)
\mathcal{E}^{\mu,\nu}_m(W(t))
\right\}
\nonumber
\end{align}
where $D_2=D_{12}(L_1,L_2,T,\|Q_0\|_{H^m})$. 
On the other hand, it is now easy to obtain  
\begin{align}
\frac{d}{dt}\|W(t)\|_{H^{m-1}}^2
\leqslant 
D_{13}
\left\{
\mathcal{E}^{\mu,\nu}_m(W(t))^2
+
(\nu^{m-2}+\nu^2)
\mathcal{E}^{\mu,\nu}_m(W(t))
\right\}
\nonumber 
\end{align}
where $D_{13}=D_{13}(L_1,L_2,T,\|Q_0\|_{H^m})$. 
The above two inequalities and $0<\nu<1$ shows  
\begin{align}
\frac{d}{dt}
\mathcal{E}^{\mu,\nu}_m(W(t))^2
\leqslant 
D_{14}
\left\{
\mathcal{E}^{\mu,\nu}_m(W(t))^2
+
(\nu^{m-3}+\nu)
\mathcal{E}^{\mu,\nu}_m(W(t))
\right\}
\nonumber 
\end{align}
where $D_{14}=D_{14}(L_1,L_2,T,\|Q_0\|_{H^m})$. 
The Gronwall inequality 
and \eqref{eq:H4Em}
shows 
$$
\|W\|_{C([0,T];H^m)}
\leqslant 
D_{15}(L_1,L_2,T,\|Q_0\|_{H^m})
\left(
\nu^{m-3}
 +\nu
+\|W(0)\|_{H^m}
\right), 
$$
which is the desired \eqref{eq:mcauchy}.
\end{proof}
\section{Proof of Theorem~\ref{theorem:lwp}}
\label{section:prooflw}
This section completes the proof of Theorem~\ref{theorem:lwp}.
\begin{proof}[Proof of Theorem~\ref{theorem:lwp}]
Let $m$ be an integer satisfying $m\geqslant 4$, 
and let $Q_0\in H^m(\RR;\mathbb{C}^n)$. 
From the time-reversibility of \eqref{eq:apde}, 
it suffices to solve \eqref{eq:apde}-\eqref{eq:adata} 
in positive time-direction. 
\vspace{0.5em}
\\
\underline{Local existence of a solution in $CH^m$:}
\\
Let $\left\{Q_0^{\alpha}\right\}_{\alpha\in (0,1)}$ be the 
Bona-Smith approximation of $Q_0$ introduced in 
Section~\ref{section:local}. 
For $\mu$ and $\nu$ with $0<\mu\leqslant \nu<1$, 
let $Q^{\mu}$ and $Q^{\nu}$ satisfy \eqref{eq:mapde}-\eqref{eq:madata} and 
\eqref{eq:napde}-\eqref{eq:nadata} respectively.
Let $T=T(\|Q_0\|_{H^4})>0$ be given by \eqref{eq:extime} independently of $\mu$ and $\nu$.
Combining \eqref{eq:mcauchy} in Proposition~\ref{proposition:cauchy} 
with the triangle inequality, 
the convergence $Q_0^{\alpha}\to Q_0$ in $H^m$ as $\alpha\downarrow 0$, 
and $m\geqslant 4$, 
we deduce 
\begin{align}
&\|Q^{\mu}-Q^{\nu}\|_{C([0,T];H^m)}
\nonumber
\\
&\leqslant 
C(T,\|Q_0\|_{H^m})\left(
\nu^{m-3}+\nu
+\|Q_0^{\mu}-Q_0\|_{H^m}
+\|Q_0-Q_0^{\nu}\|_{H^m}
\right)
\nonumber
\\
&\to 0 \quad (\mu, \nu\downarrow 0).
\nonumber
\end{align}
This shows that 
$\left\{Q^{\mu}\right\}_{\mu\in (0,1)}$
is Cauchy in $C([0,T];H^m(\RR;\mathbb{C}^n))$, 
and thus there exists its limit $Q:=\displaystyle\lim_{\mu \downarrow 0}Q^{\mu}$ in $C([0,T];H^m(\RR;\mathbb{C}^n))$. 
By the strong convergence, 
it is not difficult to prove that $Q$ is 
actually a solution to \eqref{eq:apde}-\eqref{eq:adata}. 
If we may add something, 
the proof of it is reduced to the justification of  
\begin{align}
&F_j(Q^{\mu},\p_xQ^{\mu}, \p_x^2Q^{\mu})\to F_j(Q,\p_xQ, \p_x^2Q)
\quad 
\text{as}
\quad 
\mu\downarrow 0
\nonumber
\end{align}
for each $j\in \left\{1,\ldots,n\right\}$ 
in the sense of distribution on $(0,T)\times \RR$. 
We omit the detail but 
demonstrate only the proof of 
 \begin{align}
 &F_j^3(Q^{\mu},\p_xQ^{\mu})\to F_j^3(Q,\p_xQ)
 \quad 
 \text{as}
 \quad 
 \mu\downarrow 0
 \label{eq:appb}
 \end{align}
 for readers who are interested in how to handle the nonlocal terms. 
  In fact, we can prove it in the sense of uniformly convergence on $[0,T]\times \RR$ 
  as follows: 
  By a simple calculation and the triangle inequality, 
  \begin{align}
  &
  \left|
  F_j^3(Q^{\mu},\p_xQ^{\mu})-F_j^3(Q,\p_xQ)
  \right|(t,x)
  \nonumber
  \\
  &\leqslant 
  \sum_{r=1}^{n}
            \left(\int_{\RR}
              \left|
              F_{j,r}^{3,A}(Q^{\mu},\p_xQ^{\mu})
              -
              F_{j,r}^{3,A}(Q,\p_xQ)
              \right|
              (t,y)
              dy\right)
               \left|F_{j,r}^{3,B}(Q^{\mu})(t,x)
               \right|
 \nonumber
 \\
 &\quad+
 \sum_{r=1}^{n}
            \left(\int_{\RR}
            \left|
              F_{j,r}^{3,A}(Q,\p_xQ)
              \right|
              (t,y)
              dy\right)
               \left|\left(F_{j,r}^{3,B}(Q^{\mu})-F_{j,r}^{3,B}(Q)\right)(t,x)
               \right|.
 \nonumber
  \end{align}
  Since $F_{j,r}^{3,A}$ and $F_{j,r}^{3,B}$ satisfy (F3) with 
  \eqref{eq:F31} and \eqref{eq:F32}, 
  it follows from the 
  Schwarz inequality and 
  the Sobolev embedding 
  \begin{align}
 &\int_{\RR}
             \left|
               F_{j,r}^{3,A}(Q,\p_xQ)
               \right|
               (t,y)
               dy
 \nonumber
 \\
 &\leqslant
 C\left(
 \sum_{p_3=0}^{d_3}\|Q(t)\|_{L^{\infty}}^{p_3}
 +
 \sum_{p_4=0}^{d_4}\|\p_xQ(t)\|_{L^{\infty}}^{p_4}
 \right)
 \int_{\RR}
 \left(
 |Q|^2+|\p_xQ|^2
 \right)
 (t,y)
 \,dy
 \nonumber
 \\
 &\leqslant 
 C
 \sum_{\ell=2}^{d_3+d_4+2}
 \|Q\|_{C([0,T];H^2)}^{\ell}, 
 \nonumber
  \end{align} 
  \begin{align}
  &\int_{\RR}
               \left|
               F_{j,r}^{3,A}(Q^{\mu},\p_xQ^{\mu})
               -
               F_{j,r}^{3,A}(Q,\p_xQ)
               \right|
               (t,y)
               dy
  \nonumber
  \\
  &\leqslant 
  C\sum_{p_3=0}^{d_3}
  \int_{\RR}
  \left(
  (
  |Q^{\mu}|^{1+p_3}
  +|Q|^{1+p_3}
  )
  |Q^{\mu}-Q|
  \right)
  (t,y)
  \,dy
  \nonumber
  \\
  &\quad 
  +C\sum_{p_4=0}^{d_4}
  \int_{\RR}
   \left(
   (
   |\p_xQ^{\mu}|^{1+p_4}
   +|\p_xQ|^{1+p_4}
   )
   |\p_xQ^{\mu}-\p_xQ|
   \right)
   (t,y)
   \,dy
   \nonumber
  \\
  &\leqslant 
  C
  \sum_{p_3=0}^{d_3}
  \left(
  \|Q^{\mu}(t)\|_{L^{\infty}}^{p_3}
  +
  \|Q(t)\|_{L^{\infty}}^{p_3}
  \right)
  \left(
  \|Q^{\mu}(t)\|_{L^2}
  +
  \|Q(t)\|_{L^2}
  \right)
  \|(Q^{\mu}-Q)(t)\|_{L^2}
  \nonumber
  \\
  &\quad 
  +
  C\sum_{p_4=0}^{d_4}
  \left(
   \|\p_xQ^{\mu}(t)\|_{L^{\infty}}^{p_4}
   +
   \|\p_xQ(t)\|_{L^{\infty}}^{p_4}
   \right)
   \nonumber
   \\
   &\qquad \qquad \qquad \times 
   \left(
   \|\p_xQ^{\mu}(t)\|_{L^2}
   +
   \|\p_xQ(t)\|_{L^2}
   \right)
   \|(\p_xQ^{\mu}-\p_xQ)(t)\|_{L^2}
   \nonumber
   \\
    &\leqslant 
    C\sum_{p_3=0}^{d_3}\left(
    \|Q^{\mu}(t)\|_{H^1}^{1+p_3}
    +
    \|Q(t)\|_{H^1}^{1+p_3}
    \right)
    \|(Q^{\mu}-Q)(t)\|_{L^2}
    \nonumber
    \\
    &\quad 
    +
    C\sum_{p_4=0}^{d_4}\left(
     \|\p_xQ^{\mu}(t)\|_{H^1}^{1+p_4}
     +
     \|\p_xQ(t)\|_{H^1}^{1+p_4}
     \right)
     \|(\p_xQ^{\mu}-\p_xQ)(t)\|_{L^2}
     \nonumber
 \\
 &\leqslant 
 C\sum_{\ell=1}^{d_3+d_4+1}
 \left(
 \sup_{\mu\in (0,1)}\|Q^{\mu}\|_{C([0,T];H^2)}^{\ell}
    +
    \|Q\|_{C([0,T];H^2)}^{\ell}
 \right)
 \|Q^{\mu}-Q\|_{C([0,T];H^1)}, 
 \nonumber
  \end{align}
  \begin{align}
  \left|
  F_{j,r}^{3,B}(Q^{\mu})(t,x)
  \right|
  &\leqslant 
  C
 \sum_{p_5=0}^{d_5} 
  \|Q^{\mu}(t)\|_{L^{\infty}}^{1+p_5}
  \leqslant 
  C\sum_{p_5=0}^{d_5} 
  \sup_{\mu\in (0,1)}\|Q^{\mu}\|_{C([0,T];H^1)}^{1+p_5}, 
  \nonumber
  \end{align}
  \begin{align}
 &\left|
 \left(F_{j,r}^{3,B}(Q^{\mu})-F_{j,r}^{3,B}(Q)\right)(t,x)
 \right|
 \nonumber
 \\
 &\leqslant 
 C\sum_{p_5=0}^{d_5}\left(
     \|Q^{\mu}(t)\|_{L^{\infty}}^{p_5}
     +
     \|Q(t)\|_{L^{\infty}}^{p_5}
     \right)
     \|(Q^{\mu}-Q)(t)\|_{L^{\infty}}
 \nonumber
 \\
 &\leqslant 
 C\sum_{p_5=0}^{d_5}
 \left(
 \sup_{\mu\in (0,1)}\|Q^{\mu}\|_{C([0,T];H^1)}^{p_5}
 +
 \|Q\|_{C([0,T];H^1)}^{p_5}
 \right)
 \|Q^{\mu}-Q\|_{C([0,T];H^1)}. 
 \nonumber
  \end{align}
  Combining them, we obtain 
  \begin{align}
   &
  \sup_{(t,x)\in [0,T]\times \RR} \left|
   F_j^3(Q^{\mu},\p_xQ^{\mu})-F_j^3(Q,\p_xQ)
   \right|(t,x)
   \nonumber
   \\
   &\leqslant 
 C
 \sum_{\ell=1}^{d_3+d_4+1}
 \sum_{p_5=0}^{d_5}
 M_{T}^{1+p_5}
 \left(M_{T}^{\ell}
 +\|Q\|_{C([0,T];H^2)}^{\ell}
 \right)
 \|Q^{\mu}-Q\|_{C([0,T];H^1)}
 \nonumber
 \\&\quad
 +
 C\sum_{\ell=2}^{d_3+d_4+2}
 \sum_{p_5=0}^{d_5}
 \|Q\|_{C([0,T];H^2)}^{\ell}
 \left(
 M_{T}^{p_5}
 +\|Q\|_{C([0,T];H^1)}^{p_5}
 \right)
 \|Q^{\mu}-Q\|_{C([0,T];H^1)}, 
 \nonumber
  \end{align}
  where $M_{T}:=\displaystyle\sup_{\mu\in (0,1)}\|Q^{\mu}\|_{C([0,T];H^2)}$. 
 Since $\left\{Q^{\mu}\right\}_{\mu\in (0,1)}$ converges to $Q$ 
 and is bounded in $C([0,T];H^m(\RR;\mathbb{C}^n))$, it follows that 
 $\|Q^{\mu}-Q\|_{C([0,T];H^1)}\to 0$ as $\mu\downarrow 0$ and  $M_{T}<\infty$. 
 This implies the desired convergence
 $$
 \sup_{(t,x)\in [0,T]\times \RR} \left|
   F_j^3(Q^{\mu},\p_xQ^{\mu})-F_j^3(Q,\p_xQ)
   \right|(t,x)\to 0
   \quad
   \text{as}
   \quad \mu\downarrow 0.
 $$
 \vspace{-0.3em}
\\
\underline{Uniqueness of the solution:}
\\ 
Let $Q^1, Q^2\in C([0,T];H^4(\RR;\mathbb{C}^n))
$
be solutions to \eqref{eq:apde} with 
$Q^1(0,x)=Q^2(0,x)$.
Then $Q^1, Q^2\in C^1([0,T];L^2(\RR;\mathbb{C}^n))$. 
Set 
$\widetilde{W}={}^t(\widetilde{W_1},\ldots,\widetilde{W_n}):=Q^1-Q^2$.
It suffices to show $\widetilde{W}=0$. 
For this purpose,  
we introduce 
$\widetilde{Z^1}={}^t(\widetilde{Z^{1}_1},\ldots,\widetilde{Z^{1}_n})
$ 
and $\mathcal{E}(\widetilde{W}(t))$, 
where
\begin{align}
 &\widetilde{Z^{1}_j}=\widetilde{Z^{1}_j}(t,x)
 :=\p_x\widetilde{W_j}(t,x)+\dfrac{L}{4a_j}\Phi^{1}(t,x)i\widetilde{W_j}(t,x)
 \quad (j\in \left\{1,\ldots,n\right\}), 
 \label{eq:Z_12}
\\
 &\Phi^{1}=\Phi^{1}(t,x)
 :=\int_{-\infty}^{x}g(Q^{1}(t,y))\,dy
 \left(
 =\int_{-\infty}^{x}|Q^{1}(t,y)|^2\,dy
 \right), 
 \label{eq:Phi1}
 \\
 &\mathcal{E}(\widetilde{W}(t))^2
 :=
 \|\widetilde{Z^1}(t)\|_{L^2}^2+\|\widetilde{W}(t)\|_{L^2}^2, 
 \nonumber
 \end{align} 
 and $L>1$ is again a constant which will be taken later. 
 The argument below is formally the same as that we obtain \eqref{eq:8315} and \eqref{eq:8317} 
 under the setting $\mu=\nu=0$ and the modification of $(Q^{\mu}, Q^{\nu})$ with 
 $(Q^1,Q^2)$. 
 We can make it rigorous by taking the regularity of $Q^1$ and $Q^2$ into account: 
 Since 
 \begin{align}
 &\widetilde{Z^{1}}\in 
 C([0,T];H^3(\RR;\mathbb{C}^n))
 \cap C^1([0,T];H^{-1}(\RR;\mathbb{C}^n)),
 \label{eq:328}
 \end{align}
the following holds in the sense of distribution on $(0,T)$: 
\begin{align}
\frac{d}{dt}
\|\widetilde{Z^1}(t)\|_{L^2}^2
&=
2\operatorname{Re}
\sum_{j=1}^n
\left\langle
\p_t \widetilde{Z^1_j}(t), \widetilde{Z^1_j}(t)
\right\rangle_{H^{-1},H^1}, 
\label{eq:du3281}
\end{align} 
where $\left\langle\cdot, \cdot\right\rangle_{H^{-1},H^1}$ 
denotes the duality paring for $H^{-1}(\RR;\mathbb{C})$ and 
$H^{1}(\RR;\mathbb{C})$. 
Since  $Q^1\in C([0,T];H^4(\RR;\mathbb{C}^n))
\cap C^1([0,T];L^2(\RR;\mathbb{C}^n))$,  
$$
\p_t\Phi^{1}(t,x)
=
2\operatorname{Re}
\int_{-\infty}^x
\dfrac{\p Q^{1}}{\p t}(t,y)\cdot  Q^{1}(t,y)\,dy
$$   
holds for any $(t,x)\in (0,T)\times \RR$.
Moreover, it follows that 
\begin{align}
 &
 \p_t\left(
 \dfrac{L}{4a_j}\Phi^{1}i\widetilde{W}_j
 \right)
 =
 \dfrac{L}{4a_j}\Phi^{1}i\p_t\widetilde{W}_j
 +
 \dfrac{L}{4a_j}(\p_t\Phi^{1})i\widetilde{W}_j
 \quad 
 \text{in}
 \quad 
 C([0,T];L^2(\RR;\mathbb{C}))
\nonumber
 \end{align}
 for $j\in \left\{1,\ldots,n\right\}$. 
Noting them and \eqref{eq:328}, 
we deduce 
\begin{align}
\p_t\widetilde{Z^{1}_j}
&=
\left(ia_j\p_x^4+b_j\p_x^3+i\lambda_j\p_x^2
\right)\widetilde{Z^{1}_j}
+
R_j
\quad 
\text{in}
\quad 
C([0,T];H^{-1}(\RR;\mathbb{C}))
\label{eq:du3282}
\end{align}
for $j\in \left\{1,\ldots,n\right\}$, 
where 
\begin{align}
&R_j
=O\left(g(Q^{1})|\p_x^2\widetilde{Z^1}|\right)
+L\p_x\left\{g(Q^{1})
\p_x\widetilde{Z_j^1}\right\}
+
O\left(
|Q^{1}||\p_x\widetilde{Z^1}|
\right)
+r_j, 
\nonumber
\\ 
&\|r_j(t)\|_{L^2}
\leqslant 
C(\|Q^{1}\|_{C([0,T];H^4)}+\|Q^{2}\|_{C([0,T];H^3)})
 \|\widetilde{W}(t)\|_{H^1}.
\nonumber
\end{align}
In fact, \eqref{eq:328} shows $R_j\in C([0,T];L^2(\RR;\mathbb{C}))$, 
and thus $\langle R_j(t), \widetilde{Z^{1}_j}(t)\rangle_{H^{-1},H^1}$ is just their 
$L^2$-product.  
Noting them and using \eqref{eq:du3281}-\eqref{eq:du3282}, 
we can take a sufficiently large $L$ so that 
\begin{align}
\frac{d}{dt}
\|\widetilde{Z^1}(t)\|_{L^2}^2
&\leqslant 
A_{L}(\|Q^{1}(t)\|_{H^4}+\|Q^{2}(t)\|_{H^3})
\mathcal{E}(\widetilde{W}(t))^2,  
\nonumber
\end{align}
where $A_L(\cdot)$ is a positive-valued increasing function on $[0,\infty)$ which depends on $L$.
This estimate combined with that for the time-derivative of
$\|\widetilde{W}(t)\|_{L^2}^2$ implies 
\begin{align}
\frac{d}{dt}
\mathcal{E}(\widetilde{W}(t))^2
&\leqslant 
C(L, \|Q^{1}\|_{C([0,T];H^4)}+\|Q^{2}\|_{C([0,T];H^3)})
\mathcal{E}(\widetilde{W}(t))^2.
\nonumber
\end{align}
Hence, the Gronwall inequality and $Q^1(0,x)=Q^2(0,x)$  
shows $\mathcal{E}(\widetilde{W}(t))=0$ for any $t\in [0,T]$. 
This implies $\widetilde{W}=0$ on $[0,T]\times \RR$, which is the desired result. 
\vspace{0.5em}
\\
\underline{Continuous dependence:}
\\
Let $Q\in C([0,T(\|Q_0\|_{H^4})];H^m(\RR;\mathbb{C}^n))$ be the unique solution 
to \eqref{eq:apde} with $Q(0,\cdot)=Q_0\in H^m(\RR;\mathbb{C}^n)$
constructed above. 
Fix $T^{\prime}\in (0,T(\|Q_0\|_{H^4})$.
Let $\eta>0$ be any given. 
We take $\delta>0$ (which will be retaken 
sufficiently small later) and 
$\widetilde{Q_0}\in H^m(\RR;\mathbb{C}^n)$ to satisfy 
$\|Q_0-\widetilde{Q}_0\|_{H^m}<\delta$.  
We denote 
the solution to \eqref{eq:apde} with $\widetilde{Q}(0,\cdot)=\widetilde{Q}_0$ 
by $\widetilde{Q}\in C([0,T(\|\widetilde{Q}_0\|_{H^4})];H^m(\RR;\mathbb{C}^n))$.
Moreover, let $Q_{0}^{\alpha}$ and 
$\widetilde{Q}_{0}^{\alpha}$ for each $\alpha\in (0,1)$ be defined to form   
Bona-Smith approximations of $Q_0$ and $\widetilde{Q}_0$ respectively, 
and let 
$Q^{\alpha}$ 
and $\widetilde{Q}^{\alpha}$
be regularized solutions to \eqref{eq:bpde} (for $\ep=\alpha$) with  
$Q^{\alpha}(0,\cdot)=Q_{0}^{\alpha}$ and $\widetilde{Q}^{\alpha}(0,\cdot)=\widetilde{Q}_{0}^{\alpha}$ respectively. 
In view of the lower-semicontinuity for $T=T(\|Q_0\|_{H^4})$ given by \eqref{eq:extime}
with respect to $Q_0$, there exists a sufficiently small $0<\delta_1<1$ such that 
$Q, \widetilde{Q}, Q^{\alpha}, \widetilde{Q}^{\alpha}$ exist commonly at least 
on $[0,T^{\prime}]$ if $\delta$ satisfies $0<\delta<\delta_1$. In what follows, we fix such $\delta_1$ and 
assume $0<\delta<\delta_1<1$. 
\par 
We estimate $\|Q-\widetilde{Q}\|_{C([0,T^{\prime}];H^m)}$.  
By the triangle inequality, 
\begin{align}
\|Q-\widetilde{Q}\|_{C([0,T^{\prime}];H^m)}
&\leqslant 
\|Q-Q^{\alpha}\|_{C([0,T^{\prime}];H^m)}
+
\|Q^{\alpha}-\widetilde{Q}^{\alpha}\|_{C([0,T^{\prime}];H^m)}
\nonumber
\\
&\quad
+
\|\widetilde{Q}^{\alpha}-\widetilde{Q}\|_{C([0,T^{\prime}];H^m)}.
\nonumber
\end{align}
\begin{proposition}
\label{proposition:cdprop} 
Let $\alpha\in (0,1)$. 
There exists a constant $C=C(T,\|Q_0\|_{H^m})>1$ 
which depends on $T$ and $\|Q_0\|_{H^m}$ but is independent of
$\alpha$ such that 
\begin{align}
&\|Q-Q^{\alpha}\|_{C([0,T^{\prime}];H^m)}
\leqslant 
C(\alpha^{m-3}+\alpha+\|Q_0-Q_0^{\alpha}\|_{H^m}),
\label{eq:6100}
\\
&\|\widetilde{Q}^{\alpha}-\widetilde{Q}\|_{C([0,T^{\prime}];H^m)}
\leqslant 
C(\alpha^{m-3}+\alpha+\|\widetilde{Q}_0^{\alpha}-\widetilde{Q}_0\|_{H^m}),
\label{eq:6101}
\\
&\|Q^{\alpha}-\widetilde{Q}^{\alpha}\|_{C([0,T^{\prime}];H^m)}
\leqslant 
C(\alpha^{m-3}+\alpha^{-2}\|Q_0-\widetilde{Q}_0\|_{H^1}
+\|Q_0^{\alpha}-\widetilde{Q}_0^{\alpha}\|_{H^m}).
\label{eq:6102}
\end{align}
\end{proposition}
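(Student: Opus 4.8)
The plan is to obtain \eqref{eq:6100} and \eqref{eq:6101} as immediate consequences of Proposition~\ref{proposition:cauchy}, and to establish the genuinely new bound \eqref{eq:6102} by re-running the energy scheme of Section~\ref{section:BS} in the situation where the two solutions being compared carry the \emph{same} regularization parameter $\alpha$. For \eqref{eq:6100}: fix $\alpha\in(0,1)$, put $\nu=\alpha$ in \eqref{eq:mcauchy}, and let $\mu\downarrow 0$; since $Q^{\mu}\to Q$ in $C([0,T];H^m(\RR;\mathbb{C}^n))$ and $Q_0^{\mu}\to Q_0$ in $H^m(\RR;\mathbb{C}^n)$, and since $T'<T$ while all of $Q,\widetilde Q,Q^{\alpha},\widetilde Q^{\alpha}$ live on $[0,T']$ by the choice of $\delta_1$, this yields $\|Q-Q^{\alpha}\|_{C([0,T'];H^m)}\leqslant C(\alpha^{m-3}+\alpha+\|Q_0-Q_0^{\alpha}\|_{H^m})$ with $C=C(T,\|Q_0\|_{H^m})$. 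The same limit with $\widetilde Q$ in place of $Q$ gives \eqref{eq:6101}, the constant staying uniform because $\|\widetilde Q_0\|_{H^m}\leqslant\|Q_0\|_{H^m}+\delta_1$.

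For \eqref{eq:6102} I would set $W:=Q^{\alpha}-\widetilde Q^{\alpha}$ and mimic the proof of Proposition~\ref{proposition:cauchy}, using the gauged variables $Z^1,Z^m$ built from $\Phi^{\alpha}:=\int_{-\infty}^{x}|Q^{\alpha}(t,y)|^2\,dy$ exactly as in \eqref{eq:Z_j}--\eqref{eq:Phia} (with $Q^{\mu}$ replaced by $Q^{\alpha}$) and the energies $\mathcal{E}^{\alpha}_1(W),\mathcal{E}^{\alpha}_m(W)$. The structural simplification is that the parabolic-plus-dispersive operator is now identical for $Q^{\alpha}$ and $\widetilde Q^{\alpha}$, so the $(\nu^5-\mu^5)$-type terms appearing in \eqref{eq:8315}, in the analogue of \eqref{eq:9011}, and in \eqref{eq:9016} are simply absent. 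At the $H^1$ level, the argument of \eqref{eq:9020}--\eqref{eq:8317} with those terms deleted gives, after fixing $L$ large, $\|W\|_{C([0,T'];H^1)}\leqslant C\|Q_0^{\alpha}-\widetilde Q_0^{\alpha}\|_{H^1}$ with $C=C(T,\|Q_0\|_{H^m})$, using that \eqref{eq:9010} holds for $\widetilde Q^{\alpha}$ as well; since the Bona--Smith cutoff contracts $H^1$, this is bounded by $C\|Q_0-\widetilde Q_0\|_{H^1}$, but it does not decay in $\alpha$.

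At the $H^m$ level I would reproduce \eqref{eq:9011}--\eqref{eq:9016} with $(Q^{\mu},Q^{\nu})$ replaced by $(Q^{\alpha},\widetilde Q^{\alpha})$. The only surviving terms needing care are the cross terms produced by $F_j(Q^{\alpha},\p_xQ^{\alpha},\p_x^2Q^{\alpha})-F_j(\widetilde Q^{\alpha},\p_x\widetilde Q^{\alpha},\p_x^2\widetilde Q^{\alpha})$, namely $O(|W|\,|\p_x^{m+2}\widetilde Q^{\alpha}|)$ and $O((|\p_xW|+|W|)\,|\p_x^{m+1}\widetilde Q^{\alpha}|)$; since $\widetilde Q^{\alpha}\in H^{\infty}(\RR;\mathbb{C}^n)$ with $\|\widetilde Q^{\alpha}\|_{C([0,T'];H^{m+j})}\leqslant C\alpha^{-j}$ by \eqref{eq:6092} and the $\alpha$-uniform bound on $\|\widetilde Q^{\alpha}\|_{C([0,T'];H^m)}$ (cf.\ \eqref{eq:90162}), an integration by parts and the Young inequality bound their contribution to $\frac{1}{2}\frac{d}{dt}\|Z^m(t)\|_{L^2}^2$ by $C\alpha^{-2}\|W(t)\|_{H^1}\|Z^m(t)\|_{L^2}$, while all remaining nonlinear contributions --- in particular the nonlocal $F_j^3$ terms, handled exactly as in Sections~\ref{section:local}--\ref{section:BS} --- are controlled by $C\mathcal{E}^{\alpha}_m(W(t))^2$ uniformly in $\alpha$. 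Inserting the $H^1$ bound from the previous step, and then $\|Q_0^{\alpha}-\widetilde Q_0^{\alpha}\|_{H^1}\leqslant\|Q_0^{\alpha}-Q_0\|_{H^1}+\|Q_0-\widetilde Q_0\|_{H^1}+\|\widetilde Q_0-\widetilde Q_0^{\alpha}\|_{H^1}\leqslant C\alpha^{m-1}+\|Q_0-\widetilde Q_0\|_{H^1}$ via \eqref{eq:6093} with $\ell=m-1$, turns this into a source of size $C(\alpha^{m-3}+\alpha^{-2}\|Q_0-\widetilde Q_0\|_{H^1})$. A Gronwall argument for $\mathcal{E}^{\alpha}_m(W(t))^2$ on $[0,T']$, combined with $\mathcal{E}^{\alpha}_m(W(0))\leqslant C\|Q_0^{\alpha}-\widetilde Q_0^{\alpha}\|_{H^m}$ and the equivalence \eqref{eq:H4Em} for $k=m$, then yields \eqref{eq:6102}.

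The main obstacle is precisely this $H^m$ step: unlike in Proposition~\ref{proposition:cauchy}, where $\|W\|_{C([0,T'];H^1)}$ was $O(\nu^{m-1}+\nu^4)$, here the low-regularity difference carries no decay, so the coupling of $W$ with the $(m+1)$-st and $(m+2)$-nd derivatives of the smooth approximation $\widetilde Q^{\alpha}$ genuinely forces the negative power $\alpha^{-2}$ in front of $\|Q_0-\widetilde Q_0\|_{H^1}$. The real work is to check that this is the \emph{only} place a negative power of $\alpha$ can enter --- i.e.\ that every other term, and in particular every nonlocal term, is estimable uniformly in $\alpha$ through the $\alpha$-independent bounds of Proposition~\ref{proposition:preloc} and the Gagliardo--Nirenberg and Sobolev inequalities --- and to track that the final constant depends only on $T$, $\|Q_0\|_{H^m}$, $\delta_1$ and the fixed gauge constant, so that \eqref{eq:6102} is usable in the continuous-dependence part of Theorem~\ref{theorem:lwp}.
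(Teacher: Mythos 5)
Your proposal is correct and follows essentially the same route as the paper: \eqref{eq:6100} and \eqref{eq:6101} by taking $\nu=\alpha$ and $\mu\downarrow 0$ in \eqref{eq:mcauchy}, and \eqref{eq:6102} by re-running the Section~\ref{section:BS} energy scheme for $W=Q^{\alpha}-\widetilde Q^{\alpha}$ with the $(\nu^5-\mu^5)$ terms absent, an $H^1$ difference bound of size $C(\alpha^{m-1}+\|Q_0-\widetilde Q_0\|_{H^1})$, and the $\alpha^{-2}$ loss entering precisely through the coupling $\|W(t)\|_{H^1}\|\widetilde Q^{\alpha}(t)\|_{H^{m+2}}$. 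This is exactly the paper's argument, including the uniformity of the constant via $\|\widetilde Q_0\|_{H^m}\leqslant\|Q_0\|_{H^m}+1$.
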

\begin{proof}[Proof of Proposition~\ref{proposition:cdprop}]
Let $\mu, \nu$ be positive parameters satisfying  $0<\mu\leqslant \nu<1$. 
By \eqref{eq:mcauchy}, 
the following holds: 
\begin{align}
\|Q^{\mu}-Q^{\nu}\|_{C([0,T^{\prime}];H^m)}
&\leqslant 
C(T,\|Q_0\|_{H^m})\left(
\nu^{m-3}+\nu
+\|Q_0^{\mu}-Q_0^{\nu}\|_{H^m}
\right).
\label{eq:2mcauchy}
\end{align}
\par
The estimate \eqref{eq:6100} is obtained by 
fixing $\nu=\alpha$ and by 
passing the limit  as $\mu\downarrow 0$ in \eqref{eq:2mcauchy}, 
where we use 
$Q^{\mu}_0\to Q_0$ in $H^m(\RR;\mathbb{C}^n)$ and  
$Q^{\mu}\to Q$ in $C([0,T^{\prime}];H^m(\RR;\mathbb{C}^n))$ 
as $\mu\downarrow 0$. 
\par
The estimate \eqref{eq:6101} is obtained in the same manner: 
If we show \eqref{eq:6100} for 
$\widetilde{Q}$, $\widetilde{Q}^{\alpha}$, $\widetilde{Q}_0$, $\widetilde{Q}_0^{\alpha}$
in place for $Q,Q^{\alpha},Q_0,Q_0^{\alpha}$ respectively, then it reads 
$$
\|\widetilde{Q}^{\alpha}-\widetilde{Q}\|_{C([0,T^{\prime}];H^m)}
\leqslant 
C(\alpha^{m-3}+\alpha+\|\widetilde{Q}_0^{\alpha}-\widetilde{Q}_0\|_{H^m}),
$$
where $C=C(T, \|\widetilde{Q}_0\|_{H^m})>1$. 
Recalling $\|Q_0-\widetilde{Q}_0\|_{H^m}<\delta\leqslant 1$, 
we can retake a larger constant $C$ which depends on $T$ 
and $\|Q_0\|_{H^m}$.
\par 
The estimate \eqref{eq:6102} follows from a similar argument to obtain 
\eqref{eq:mcauchy} and \eqref{eq:6100} with slight modification. 
The difference of \eqref{eq:6100} and \eqref{eq:6102}
in their right hand side  comes from
the estimate for
$Q^{\alpha}-\widetilde{Q}^{\alpha}$ in $CH^1$:  
To be more precise, 
a similar argument to obtain \eqref{eq:9031} 
(but without handling the terms with coefficient $\nu^5-\mu^5$)
yields
$$
\|Q^{\alpha}-\widetilde{Q}^{\alpha}\|_{C([0,T^{\prime}];H^1)}
\leqslant 
C_1(T, \|Q_0\|_{H^m}, \|\widetilde{Q}_0\|_{H^m})
\|Q^{\alpha}_0-\widetilde{Q}^{\alpha}_0\|_{H^1}.
$$ 
From the triangle inequality and \eqref{eq:6093} with $\ell=m-1$, 
it follows that  
\begin{align}
\|Q^{\alpha}_0-\widetilde{Q}^{\alpha}_0\|_{H^1}
&\leqslant 
\|Q^{\alpha}_0-Q_0\|_{H^1}
+\|Q_0-\widetilde{Q}_0\|_{H^1}
+\|\widetilde{Q}_0-\widetilde{Q}^{\alpha}_0\|_{H^1}
\nonumber
\\
&\leqslant 
C_2\alpha^{m-1}(\|Q_0\|_{H^m}+\|\widetilde{Q}_0\|_{H^m})
+\|Q_0-\widetilde{Q}_0\|_{H^1}, 
\nonumber
\end{align}
where the constant $C_2>0$ is also independent of $\alpha$. 
Combining them and $\|\widetilde{Q}_0\|_{H^m}\leqslant \|Q_0\|_{H^m}+1$, we see that there exists a positive constant 
$C_3=C_3(T, \|Q_0\|_{H^m})$ which is independent of $\alpha$ such that
 \begin{align}
 \|Q^{\alpha}-\widetilde{Q}^{\alpha}\|_{C([0,T^{\prime}];H^1)}
 &\leqslant 
 C_3(T, \|Q_0\|_{H^m})
 (\alpha^{m-1}+\|Q_0-\widetilde{Q}_0\|_{H^1}).
 \label{eq:add1120}
 \end{align}
It is then straightforward to derive \eqref{eq:6102} by using \eqref{eq:add1120} 
in the same way as we obtain 
\eqref{eq:mcauchy} by using \eqref{eq:1cauchy} (or \eqref{eq:90312}), where
the key procedure involves the following estimate 
\begin{align}
&
\|(Q^{\alpha}-\widetilde{Q}^{\alpha})(t)\|_{H^1}
\|\widetilde{Q}^{\alpha}(t)\|_{H^{m+2}}
\nonumber
\\
&\leqslant 
C_4(T, \|Q_0\|_{H^m}, \|\widetilde{Q_0}\|_{H^m})
 (\alpha^{m-1}+\|Q_0-\widetilde{Q}_0\|_{H^1})\alpha^{-2}
 \nonumber
 \\
 &\leqslant 
 C_5(T, \|Q_0\|_{H^m}) (\alpha^{m-3}+\alpha^{-2}\|Q_0-\widetilde{Q}_0\|_{H^1}), 
 \nonumber
\end{align}
which corresponds to the part \eqref{eq:9017} to obtain \eqref{eq:mcauchy}. 
The difference between  the  above and \eqref{eq:9017} 
affects the right hand side of \eqref{eq:6102}. 
We omit the detail for the other parts.
 \end{proof}
\par 
Furthermore, by the triangle inequality and \eqref{eq:bs1}, 
\begin{align}
\|Q_0^{\alpha}-\widetilde{Q}_0^{\alpha}\|_{H^m}
&=
\|(Q_0-\widetilde{Q}_0)^{\alpha}\|_{H^m}
\leqslant 
\|Q_0-\widetilde{Q}_0\|_{H^m}, 
\label{eq:9032}
\\
\|\widetilde{Q}_0^{\alpha}-\widetilde{Q}_0\|_{H^m}
&\leqslant 
\|\widetilde{Q}_0^{\alpha}-Q_0^{\alpha}\|_{H^m}
+
\|Q_0^{\alpha}-Q_0\|_{H^m}
+
\|Q_0-\widetilde{Q}_0\|_{H^m}
\nonumber
\\
&\leqslant 
\|Q_0^{\alpha}-Q_0\|_{H^m}
+
2\|Q_0-\widetilde{Q}_0\|_{H^m}.
\label{eq:9033}
\end{align}
Gathering \eqref{eq:6100}-\eqref{eq:6102} and \eqref{eq:9032}-\eqref{eq:9033}, 
we deduce 
\begin{align}
&\|Q-\widetilde{Q}\|_{C([0,T^{\prime}];H^m)}
\nonumber
\\
&\leqslant 
C(T,\|Q_0\|_{H^m})
\nonumber
\\
&\quad \times 
\left\{
3\alpha^{m-3}+2\alpha+(3+\alpha^{-2})\|Q_0-\widetilde{Q}_0\|_{H^m}
+2\|Q_0-Q_0^{\alpha}\|_{H^m}
\right\}.
\nonumber
\end{align}
Since $m\geqslant 4$ and   
$Q_0^{\alpha}\to Q_0$ in $H^m(\RR;\mathbb{C}^n)$ as $\alpha \downarrow 0$, 
we can take a sufficiently small $0<\alpha_0<1$
such that
$$
C(T,\|Q_0\|_{H^m})(3\alpha^{m-3}+2\alpha)< \eta, \quad
2C(T,\|Q_0\|_{H^m})\|Q_0-Q_0^{\alpha}\|_{H^m}< \eta
$$
for any $\alpha\in (0,\alpha_0]$. 
By fixing $\alpha=\alpha_0$, we have 
\begin{align}
&\|Q-\widetilde{Q}\|_{C([0,T^{\prime}];H^m)}
<
2\eta
+
C(T,\|Q_0\|_{H^m})(3+(\alpha_0)^{-2})\|Q_0-\widetilde{Q}_0\|_{H^m}.
\nonumber
\end{align}
Then we take a $\delta_2\in (0,\delta_1)$ so that 
$$
C(T,\|Q_0\|_{H^m})(3+(\alpha_0)^{-2})\delta_2<\eta.
$$
This shows $\|Q-\widetilde{Q}\|_{C([0,T^{\prime}];H^m)}<3\eta$ for any $\delta\in (0,\delta_2)$. 
Note that $\delta_2>0$ is decided to depend on $\eta$ and $Q_0$ 
but not on $\widetilde{Q}_0$, since so is $\alpha_0$.
This completes the proof of the continuous dependence.
\end{proof}
%
%
\appendix
\section{Proof of Proposition~\ref{prop:linear}}
 \label{section:Appendix}
We state the proof of Proposition~\ref{prop:linear}. 
Our proof follows that of Proposition~2.1 in \cite{CO2}, 
and mostly use the same notation in \cite[Section~2]{CO2} 
for readability to see points to change.
\begin{proof}[Proof of Proposition~\ref{prop:linear}]
We give only the outline of the energy estimates. 
\par 
We introduce a pseudodifferential operator 
$\Lambda=I+\tilde{\Lambda}$ of order zero. 
Here,   
$I$ is the identity operator and the symbol of $\tilde{\Lambda}$ is 
given by 
$$
\tilde{\lambda}(x,\xi)
=
\Phi(x)\frac{\varphi(\xi)}{4a\xi},
$$
where 
$$
\Phi(x)
=
L
\int_0^x
\phi(y)
dy, 
\quad 
\phi=\phi_A+|\phi_B|^2, 
$$
$L>3$ is a constant, 
$\varphi(\xi)\in C^{\infty}(\RR)$ is taken to be 
a real-valued even function which satisfies 
$$
\varphi(\xi)=1\ \  (|\xi|\geqslant r+1), \quad 
\varphi(\xi)=0\ \ (|\xi|\leqslant r), 
$$ 
and $r>0$ is a sufficiently large constant so that  
$\Lambda$ is an automorphism on $L^2(\RR;\mathbb{C})$. 
Compared with the setting of $\Lambda$ used in \cite[Section~2]{CO2}, 
the definition of $\Phi(x)$ is slightly changed by considering 
\eqref{eq:addd3292} and \eqref{eq:addd3293}, and   
$\varphi(\xi)$ is explicitly mentioned to be a real-valued even function, 
which implies  
$\overline{\tilde{\lambda}(x,-\xi)}=-\tilde{\lambda}(x,\xi)$ 
and hence $\overline{\tilde{\Lambda}v}=-\tilde{\Lambda}\overline{v}$.
\par 
Let $u$ be a solution to \eqref{eq:pde1}, 
and set $v=\Lambda u$. Moreover, set $D_x=-i\p_x$.
Here we denote by $\mathscr{L}$ 
the set of all $L^2$-bounded operators on $\RR$. 
In what follows, different positive constants are denoted by the same $C$, 
and different operators in $C(\RR;\mathscr{L})$  
are denoted by the same $P(t)$.
Then, we deduce 
\begin{align}
\Lambda \p_tu&=\p_tv, 
\nonumber
\\
\Lambda ia\p_x^4u
&=
ia\p_x^4v
+ia\left[\tilde{\Lambda}, D_x^4\right]v
-ia\left[\tilde{\Lambda},D_x^4\right]\tilde{\Lambda}v+P(t)v, 
\label{eq:c4}
\\
\Lambda b\p_x^3u
&=
b\p_x^3v-ib
\left[\tilde{\Lambda},D_x^3\right]v
+P(t)v, 
\label{eq:c3}
\\
\Lambda 
i\p_x\left\{
\beta_1(t,x)\p_xu
\right\}
&=
i\p_x\left\{
\beta_1(t,x)\p_xv
\right\}
+P(t)v, 
\nonumber
\\
\Lambda 
i\p_x\left\{
\beta_2(t,x)\overline{\p_xu}
\right\}
&=
i\p_x\left\{
\beta_2(t,x)
\overline{\p_xv}
\right\}
-2i\beta_2(t,x)D_x^2\tilde{\Lambda}\overline{v}
+P(t)\overline{v}, 
\label{eq:c22}
\\
\Lambda \gamma_1(t,x)\p_xu
&=
\gamma_1(t,x)\p_xv+P(t)v, 
\nonumber 
\\
\Lambda \gamma_2(t,x)\overline{\p_xu}
&=
\gamma_2(t,x)\overline{\p_xv}+P(t)\overline{v}, 
\nonumber 
\end{align}
We here check only \eqref{eq:c3} and \eqref{eq:c22}, 
because they are not handled  in \cite[Section~2]{CO2} 
and because the effect of $\overline{\tilde{\Lambda}v}\neq \tilde{\Lambda}\overline{v}$ appears in \eqref{eq:c22}. 
The equality \eqref{eq:c4} is shown in \cite[Section~2]{CO2} 
and other relations are not difficult to be checked by the same argument. 
For \eqref{eq:c3}, we deduce  
\begin{align}
\Lambda b\p_x^3u
&=
-ib(I+\tilde{\Lambda}) 
D_x^3(I-\tilde{\Lambda}+\tilde{\Lambda}^2-\tilde{\Lambda}^3+\cdots)v
\nonumber
\\
&=
-ibD_x^3(I-\tilde{\Lambda}+\tilde{\Lambda}^2-\tilde{\Lambda}^3+\cdots)v
\nonumber
\\
&\quad 
-ib\tilde{\Lambda}D_x^3(I-\tilde{\Lambda}+\tilde{\Lambda}^2-\tilde{\Lambda}^3+\cdots)v
\nonumber
\\&=
-ibD_x^3v+ibD_x^3\tilde{\Lambda}(I-\tilde{\Lambda}+\tilde{\Lambda}^2-\tilde{\Lambda}^3+\cdots)v
\nonumber
\\
&\quad 
-ib\tilde{\Lambda}D_x^3(I-\tilde{\Lambda}+\tilde{\Lambda}^2-\tilde{\Lambda}^3+\cdots)v
\nonumber
\\&=
b\p_x^3v-ib
\left[\tilde{\Lambda},D_x^3\right]
(I-\tilde{\Lambda}+\tilde{\Lambda}^2-\tilde{\Lambda}^3+\cdots)v
\nonumber
\\
&=
b\p_x^3v-ib
\left[\tilde{\Lambda},D_x^3\right]v
+P(t)v. 
\nonumber
\end{align}
For \eqref{eq:c22}, noting $\overline{\tilde{\Lambda}v}=-\tilde{\Lambda}\overline{v}$, 
we deduce 
\begin{align}
\Lambda i\beta_2(t,x)
\p_x^2\overline{u}
&=
-i(I+\tilde{\Lambda})\beta_2(t,x)D_x^2
\overline{
(I-\tilde{\Lambda}+\tilde{\Lambda}^2-\tilde{\Lambda}^3+\cdots)v
}
\nonumber
\\
&=
-i(I+\tilde{\Lambda})\beta_2(t,x)D_x^2
(I+\tilde{\Lambda}+\tilde{\Lambda}^2+\tilde{\Lambda}^3+\cdots)
\overline{v}
\nonumber
\\
&=
-i\beta_2(t,x)D_x^2
(I+\tilde{\Lambda}+\tilde{\Lambda}^2+\tilde{\Lambda}^3+\cdots)
\overline{v}
\nonumber
\\
&\quad 
-i\tilde{\Lambda}\beta_2(t,x)D_x^2
(I+\tilde{\Lambda}+\tilde{\Lambda}^2+\tilde{\Lambda}^3+\cdots)
\overline{v}
\nonumber
\\
&=
-i\beta_2(t,x)D_x^2\overline{v}
-i\beta_2(t,x)D_x^2\tilde{\Lambda}
(I+\tilde{\Lambda}+\tilde{\Lambda}^2+\tilde{\Lambda}^3+\cdots)
\overline{v}
\nonumber
\\
&\quad 
-i\tilde{\Lambda}\beta_2(t,x)D_x^2
(I+\tilde{\Lambda}+\tilde{\Lambda}^2+\tilde{\Lambda}^3+\cdots)
\overline{v}
\nonumber
\\
&=
i\beta_2(t,x)\p_x^2\overline{v}
-i
\left[
\tilde{\Lambda}, 
\beta_2(t,x)D_x^2
\right]
(I+\tilde{\Lambda}+\tilde{\Lambda}^2+\tilde{\Lambda}^3+\cdots)
\overline{v}
\nonumber
\\
&\quad 
-2i\beta_2(t,x)D_x^2\tilde{\Lambda}
(I+\tilde{\Lambda}+\tilde{\Lambda}^2+\tilde{\Lambda}^3+\cdots)
\overline{v}
\nonumber
\\
&=
i\beta_2(t,x)\p_x^2\overline{v}
-2i\beta_2(t,x)D_x^2\tilde{\Lambda}\overline{v}
+P(t)\overline{v}
\nonumber
\end{align}
and 
\begin{align}
\Lambda 
i(\p_x\beta_2)(t,x)
\p_x\overline{u}
&=
i(\p_x\beta_2)(t,x)
\p_x\overline{v}
+P(t)\overline{v}. 
\nonumber
\end{align}
Combining them, we have \eqref{eq:c22}. 
\par
Furthermore, by elementary pseudodifferential calculus, we have 
\begin{align}
ia\left[
\tilde{\Lambda}, D_x^4
\right]
&=
-\Phi^{\prime}(x)D_x^2+\dfrac{3}{2}i\Phi^{\prime\prime}(x)D_x
+P(t)
\nonumber
\\
&=
L\phi(x)\p_x^2+\dfrac{3L}{2}\phi^{\prime}(x)\p_x+P(t), 
\nonumber
\\
ia\left[
\tilde{\Lambda}, D_x^4
\right]\tilde{\Lambda}
&=
-\dfrac{\Phi^{\prime}(x)}{4a}\Phi(x)D_x
+P(t)
=iL\dfrac{\phi(x)}{4a}\Phi(x)\p_x+P(t), 
\nonumber
\\
ib\left[
\tilde{\Lambda}, D_x^3
\right]
&=
ib\dfrac{3i}{4a}\Phi^{\prime}(x)D_x+P(t)
=i\dfrac{3b}{4a}L\phi(x)\p_x+P(t), 
\nonumber
\\
2i\beta_2(t,x)D_x^2\tilde{\Lambda}
&=
2i\dfrac{1}{4a}\beta_2(t,x)\Phi(x)D_x+P(t)
=
\dfrac{1}{2a}\beta_2(t,x)\Phi(x)\p_x+P(t). 
\nonumber
\end{align}
Combining them, we obtain 
\begin{align*}
  \p_tv
& =
  i a\p_x^4v +b\p_x^3v
  +
  L\p_x\{\phi(x)\p_xv\}
  +
  i \p_x\{\beta_1(t,x)\p_xv\}
  \\
  &\quad
  +
   i \p_x\{\beta_2(t,x)\overline{\p_xv}\}
 +
  \left\{\operatorname{Re}\gamma_1(t,x)+\frac{L}{2}\phi^\prime(x)
  \right\}
  \p_xv
\nonumber
\\&\quad
  +
  i 
  \left\{
  \operatorname{Im}\gamma_1(t,x)-L\frac{\phi(x)\Phi(x)}{4a}
  -\dfrac{3b}{4a}L\phi(x)
  \right\}
  \p_xv
\\
&\quad + 
 \left\{
 \gamma_2(t,x)-\dfrac{1}{2a}\beta_2(t,x)\Phi(x)
 \right\}
 \overline{\p_xv}
  +
  P(t)v+P(t)\overline{v}. 
\end{align*}
\par 
Using this and the integration by parts leads to 
\begin{align}
  \frac{d}{dt}
  \int_{\mathbb{R}}
  \lvert{v}\rvert^2
  dx
&=
  -
  2L
  \int_{\mathbb{R}}
  \phi(x)
  \lvert{\p_xv}\rvert^2
  dx
  -2\Re\int_{\RR}
  i\beta_1(t,x)|\p_xv|^2dx
\nonumber
\\
&\quad
  -2\Re\int_{\RR}
  i\beta_2(t,x)\overline{\p_xv}^2dx
  \nonumber
  \\
  &\quad 
  +
  2\Re
  \int_{\RR}
  \left\{\operatorname{Re}\gamma_1(t,x)+\frac{L}{2}\phi^\prime(x)\right\}
  \p_xv \,\overline{v}
  dx
 \nonumber
\\
&\quad
+
  2\Re
  \int_{\RR}
  i 
   \left\{
   \operatorname{Im}\gamma_1(t,x)-L\frac{\phi(x)\Phi(x)}{4a}
   -\dfrac{3b}{4a}L\phi(x)
   \right\}
   \p_xv\,\overline{v}
  dx
  \nonumber
\\
&\quad
+2\Re
  \int_{\RR}
  \left\{
  \gamma_2(t,x)-\dfrac{1}{2a}\beta_2(t,x)\Phi(x)
  \right\}
  \overline{\p_xv}\,\overline{v}
  dx
  \nonumber
  \\
  &\quad
 +
  2
  \operatorname{Re}
  \int_{\mathbb{R}}
  \{P(t)v\}\overline{v}
  dx+
  2
    \operatorname{Re}
    \int_{\mathbb{R}}
    \{P(t)\overline{v}\}\overline{v}
    dx. 
  \label{eq:aa19}
\end{align} 
By assumption \eqref{eq:addd3292}, we have 
\begin{align*}
-2\Re\int_{\RR}
  i\beta_1(t,x)|\p_xv|^2dx
  &=
2\Im\int_{\RR}
  \beta_1(t,x)|\p_xv|^2dx
\leqslant 
2\int_{\RR}
  \phi_A(x)|\p_xv|^2dx, 
\\
-2\Re\int_{\RR}
  i\beta_2(t,x)\overline{\p_xv}^2dx
  &
  \leqslant 2
  \int_{\RR}|\beta_2(t,x)||\p_xv|^2dx
  \leqslant 
  2\int_{\RR}
    \phi_A(x)|\p_xv|^2dx. 
\end{align*}
By assumptions \eqref{eq:addd3292}-\eqref{eq:addd3293} and 
the definition of $\phi$ and $\Phi$,  
$$
\operatorname{Im}\gamma_1(t,x)-L\frac{\phi(x)\Phi(x)}{4a}
   -\dfrac{3b}{4a}L\phi(x)
   =O(\phi_B(x))+O(\phi(x)).
$$
This combined with the Young inequality yields 
\begin{align}
&\left|
2\Re
  \int_{\RR}
  i 
   \left\{
   \operatorname{Im}\gamma_1(t,x)
   -L\frac{\phi(x)\Phi(x)}{4a}
      -\dfrac{3b}{4a}L\phi(x)
   \right\}
   \p_xv\,\overline{v}
  dx
  \right|
\nonumber
\\
&\leqslant 
\int_{\RR}|\phi_B(x)|^2|\p_xv|^2dx
+
C\int_{\RR}|v|^2dx
\nonumber
\\
&\quad
+\int_{\RR}
\phi(x)|\p_xv|^2\,dx+
C\int_{\RR}\phi(x)|v|^2dx
\nonumber
\\
& \leqslant
 2 \int_{\mathbb{R}}
  \phi(x)
  \lvert{\p_xv}\rvert^2
  dx
  +
  C
  \int_{\mathbb{R}}
  \lvert{v}\rvert^2
  dx.
\nonumber
\end{align}
In addition, by using the integration by parts, 
the fourth and sixth terms of the right hand side of \eqref{eq:aa19} 
are bounded by 
$C\int_{\RR}|v|^2dx$. 
Hence, for any $T>0$ there exists a constant $C_T$ such that 
$$
\frac{d}{dt}
\int_{\mathbb{R}}
\lvert{v}\rvert^2
dx
\leqslant
-(2L-6)
\int_{\mathbb{R}}
\phi(x)
\lvert{\p_xv}\rvert^2
dx
+
C_T
\int_{\mathbb{R}}
\lvert{v}\rvert^2
dx
\leqslant 
C_T
\int_{\mathbb{R}}
\lvert{v}\rvert^2
dx
$$
for $t\in[0,T]$. This implies that 
$$
\int_{\RR}|v(t,x)|^2\,dx
\leqslant 
\left(\int_{\RR}|v(0,x)|^2\,dx\right) \exp(C_Tt)
$$  
for $t\in [0,T]$. 
The same inequality holds for the negative direction of $t$. 
Using these energy estimates, we can prove 
Proposition~\ref{prop:linear}.  
We omit the other parts.
\end{proof}
\section*{Acknowledgments}
This work was supported by 
JSPS Grant-in-Aid for Scientific Research (C) 
Grant Numbers JP20K03703 and JP24K06813. 
The author would like to express his sincere gratitude to Hiroyuki Chihara for 
conducting a collaborative research to publish \cite{CO2}, 
the valuable experience of which has helped the author to perform this work.  

\end{document}